\def\Z{\mathbb Z}
\def\be{\begin{equation}}
\def\ee{\end{equation}}
\def\bea{\begin{eqnarray}}
\def\eea{\end{eqnarray}}
\def\beas{\begin{eqnarray*}}
\def\eeas{\end{eqnarray*}}
\def\l{\lambda}
\def\pa{\partial }
\def\S{y_\ast}
\def\dz{(\delta z)}
\def\l{\lambda}
\def\lv{\left\vert}
\def\rv{\right\vert}
\def\tr{\tilde{r}}
\def\bcr{\begin{color}{red}}
\def\bcb{\begin{color}{blue}}
\def\ec{\end{color}}
\def\tom{\tilde\omega}
\def\tr{\tilde\rho}
\def\po{\pa\omega_-}
\def\pr{\pa\rho_-}
\def\ya{y_\ast}
\def\X{\mathcal X}
\def\Y{\mathcal Y}
\def\Z{\mathcal Z}
\newtheorem{theorem}{Theorem}[section]
\newtheorem{definition}[theorem]{Definition}
\newtheorem{proposition}[theorem]{Proposition}
\newtheorem{lemma}[theorem]{Lemma}
\newtheorem{remark}[theorem]{Remark}
\begin{document}

\title{Larson-Penston Self-similar Gravitational Collapse}

\author{Yan Guo\thanks{Division of Applied Mathematics, Brown University, Providence, RI 02912, USA, Email: Yan\_Guo@brown.edu.}, \ Mahir Had\v zi\'c\thanks{Department of Mathematics, University College London, London WC1E 6XA, UK. Email: m.hadzic@ucl.ac.uk.}, \ and Juhi Jang\thanks{Department of Mathematics, University of Southern California, Los Angeles, CA 90089, USA, and Korea Institute for Advanced Study, Seoul, Korea.  Email: juhijang@usc.edu.}}

\date{}
\maketitle
\abstract{Using numerical integration, in 1969 Penston~\cite{Penston1969} and Larson~\cite{Larson1969} independently discovered a self-similar solution describing the collapse of a self-gravitating asymptotically flat fluid with the isothermal equation of state $p=k\varrho$, $k>0$, and subject to Newtonian gravity. We rigorously prove the existence of such a Larson-Penston solution.}

\section{Isothermal Euler-Poisson system}

The classical model of a self-gravitating Newtonian star is given by the gravitational Euler-Poisson system. We work in three spatial dimensions and assume radial symmetry.
The unknowns are the gas density $\varrho(t,r)$, the pressure $p(t,r)$, and the radial velocity $u(t,r)$, where $t$ is the time coordinate and $r=|x|$ the radial coordinate in $\mathbb R^3$. 
The equations take the form
\begin{align}
\pa_t \varrho + \left(\pa_r+\frac2r\right)(\varrho u) & = 0, \label{E:ECONT}\\
\varrho \left(\pa_t u + u\pa_r u\right) + \pa_r p + \varrho \frac{m(r)}{r^2}
&= 0, \label{E:EMOM}\\
m(t,r) & = \int_0^r 4\pi \sigma^2 \varrho(t,\sigma)\,d\sigma. \label{E:FORCE}
\end{align}
Equation~\eqref{E:ECONT} is the continuity equation, equation~\eqref{E:EMOM} expresses the conservation of momentum, while
the term $\frac{m(r)}{r^2}$ is the radial component of the gravitational force induced by an asymptotically flat gravitational potential $\phi$ solving the Poisson equation $\Delta\phi = 4\pi\varrho$.
To complete the formulation of the problem we impose the {\em isothermal} equation of state, i.e. we let
\begin{align}\label{E:EQUATIONOFSTATE}
p = k \varrho, \ \ k>0.
\end{align}
Here $\sqrt k$ is the speed of sound and it is constant throughout the star.
We are interested in the existence of self-similar solutions to~\eqref{E:ECONT}--\eqref{E:EQUATIONOFSTATE} describing finite time gravitational collapse.  The only invariant scaling for~\eqref{E:ECONT}--\eqref{E:EQUATIONOFSTATE} is given through the transformation
\begin{align}
\varrho \mapsto  \lambda^{-2}\varrho (\frac t\l, \frac r\l), \ \ 
u  \mapsto u (\frac t\l, \frac r\l).\label{E:SCALING}
\end{align}
Motivated by this invariance, we seek a self-similar solution of~\eqref{E:ECONT}--\eqref{E:EQUATIONOFSTATE} of the form:
\begin{align}
\varrho(t,r)& =  (\sqrt{2\pi k} \, t)^{-2}\tilde\rho(y), \ \ 
u(t,r)  =  \sqrt k\tilde u(y), \ \
\end{align}
where 
\be
y : = \frac{r}{-\sqrt k t}.
\ee
It is convenient to introduce the relative velocity 
\be\label{E:RELVEL}
\tilde\omega : = \frac{\tilde u(y)+y}{y}.
\ee
Applying the above change of variables,
the Euler-Poisson system~\eqref{E:ECONT}--\eqref{E:EQUATIONOFSTATE} becomes
\begin{align}
\tr'  & = -\frac{2 y \tom\tr}{1- y^2\tom^2}(\tr-\tom) \label{E:RHOTILDEEQN}\\
\tom' & = \frac{1-3\tom}{y} + \frac{2y\tom^2}{1-y^2\tom^2}(\tr-\tom), \label{E:OMEGATILDEEQN}
\end{align}
where the derivative notation $'$ is short for $\pa_y$. A simple Taylor expansion at the origin $y=0$ and the asymptotic infinity $y\to+\infty$ shows that in order for a solution $(\tilde\rho,\tilde\omega)$ to~\eqref{E:RHOTILDEEQN}--\eqref{E:OMEGATILDEEQN} to be smooth and asymptotically flat, we must have 
\begin{align}
\tom(0) & = \frac13, \ \ \tr(0)>0 \label{E:REGZERO}\\
\tr(y)&  \sim_{y\to\infty}  y^{-2}, \ \ \lim_{y\to\infty}\tom(y)=1.\label{E:ASYMPFLAT}
\end{align}
By continuity, for any continuous solution satisfying~\eqref{E:REGZERO}--\eqref{E:ASYMPFLAT} there must exist at least one point $y_\ast$ such that 
$1-y_\ast^2\tom^2(y_\ast)=0$. At such a point  the system~\eqref{E:RHOTILDEEQN}--\eqref{E:OMEGATILDEEQN} is in general singular. 
This leads us to one of the central notions in this paper.


\begin{definition}[Sonic point]
A point $y_\ast>0$ is called a {\em sonic point} for the flow $(\tr(\cdot),\tom(\cdot))$ if 
\be\label{E:SP}
1-y_\ast^2\tom^2(y_\ast)=0.
\ee
\end{definition}


For a solution to be smooth through the sonic point $y_\ast$, it has to be the case that the sonic point is a removable singularity. Assuming smoothness, we can formally compute the Taylor coefficients of $(\tilde\rho,\tilde\omega)$ around $y_\ast$. Two possibilities emerge (see e.g.~\cite{BrWi1998}) - either
\begin{align}
\tr(y)& = \frac1{y_\ast} -\frac1{y_\ast^2} (y-y_\ast) + \frac{-y_\ast^2+6y_\ast-7}{2y_\ast^3(2y_\ast-3)} (y-y_\ast)^2 +O(|y-y_\ast|^3) 
\label{E:LPRHO}\\
\tom(y) &= \frac1{y_\ast} + \frac1{y_\ast}(1-\frac2{y_\ast}) (y-y_\ast) + \frac{-5y_\ast^2+19y_\ast-17}{2y_\ast^3(2y_\ast-3)}(y-y_\ast)^2 + O(|y-y_\ast|^3), \label{E:LPOMEGA}
\end{align}
or
\begin{align}
\tr(y)& = \frac1{y_\ast} +\frac1{y_\ast}\left(1-\frac3{y_\ast}\right) (y-y_\ast) +O((y-y_\ast)^2), \label{E:HUNTERRHO}\\
\tom(y) &= \frac1{y_\ast} + O((y-y_\ast)^2).\label{E:HUNTEROMEGA}
\end{align}

Using numerics, in 1969 in their seminal works Penston~\cite{Penston1969} and Larson~\cite{Larson1969}  independently discovered  an asymptotically flat smooth solution to~\eqref{E:RHOTILDEEQN}--\eqref{E:OMEGATILDEEQN} which satisfies the  boundary conditions~\eqref{E:REGZERO}--\eqref{E:ASYMPFLAT}. Their solution passes through a single sonic point $\S$ and conforms to the expansion of the type~\eqref{E:LPRHO}--\eqref{E:LPOMEGA}. In the literature, this solution is commonly referred to as the Larson-Penston (LP) collapsing solution. There have been numerous studies of self-similar collapse for isothermal stars in the astrophysics literature and here we only provide a brief overview. In 1977 Hunter~\cite{Hunter1977} numerically discovered a further (discrete) family of smooth self-similar solutions, commonly referred to as Hunter solutions, see also the important work of Shu~\cite{Shu1977}. The Taylor expansion of the Hunter solutions around the sonic point is of the form~\eqref{E:HUNTERRHO}--\eqref{E:HUNTEROMEGA}. A thorough analysis of the  various types of self-similar solutions is given by Whitworth and  Summers~\cite{WhSu1985}.  In 1988 Ori and Piran~\cite{OriPiran1988} gave numerical evidence that the LP collapse is the only stable self-similar solution in the above family of solutions, and therefore physically the most relevant. Brenner and Witelski~\cite{BrWi1998}, Maeda and Harada~\cite{MaedaHarada2001} reached the same conclusion performing careful numerical analysis of the collapse. The LP-solutions also play an important role in the study of so-called critical phenomena~\cite{GaGu2007} and are of central importance in astrophysics, see e.g.~\cite{Hartmann2009}. The central result of this work is the proof of existence of an LP-solution.


\begin{theorem}[Existence of a Larson-Penston self-similar collapsing solution]\label{T:MAIN}
There exists a $y_\ast\in(2,3)$ such that~\eqref{E:RHOTILDEEQN}--\eqref{E:ASYMPFLAT} possesses a real-analytic solution $(\tilde\rho,\tilde\omega)$ with a single sonic point at $y_\ast$. Moreover the solution satisfies the Larson-Penston expansion~\eqref{E:LPRHO}--\eqref{E:LPOMEGA} at $y=y_\ast$ and
\begin{align}
\tilde\rho(y)& >0, \ \ y\in[0,\infty) \label{E:DENSITYPROP}\\
-\frac23 y\le \tilde u(y)& <0, \ \ y\in[0,\infty), \label{E:VELPROP}
\end{align} 
where we recall~\eqref{E:RELVEL}.
\end{theorem}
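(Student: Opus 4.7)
The plan is a one-parameter shooting argument with the sonic point location $\S$ itself as the free parameter ranging over $(2,3)$. For each candidate $\S$ in this interval I would first build a local real-analytic solution through $\S$ governed by the Larson--Penston expansion~\eqref{E:LPRHO}--\eqref{E:LPOMEGA}, then extend it leftward to $y=0$ and rightward to $y=\infty$ using invariant-region barriers that rule out loss of smoothness or the appearance of additional sonic points. Finally, the correct $\S$ is selected by a continuity/intermediate-value argument so that the boundary condition $\tom(0)=\tfrac13$ from~\eqref{E:REGZERO} is met, the asymptotics~\eqref{E:ASYMPFLAT} hold, and the sign bounds~\eqref{E:DENSITYPROP}--\eqref{E:VELPROP} are verified along the way.

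\textbf{Local analytic solution at the sonic point.} For each fixed $\S\in(2,3)$ I would insert formal power series $\tr(y)=\sum_{k\ge0}\rho_k(y-\S)^k$ and $\tom(y)=\sum_{k\ge0}\omega_k(y-\S)^k$ with leading terms prescribed by~\eqref{E:LPRHO}--\eqref{E:LPOMEGA} into the system~\eqref{E:RHOTILDEEQN}--\eqref{E:OMEGATILDEEQN}, first multiplying through by $1-y^2\tom^2$ to clear the singular denominator. Matching orders yields a triangular linear recursion whose nondegeneracy hinges on a pivot proportional to $2\S-3$; this is precisely why the LP branch is well defined on $(2,3)$ and every higher Taylor coefficient is uniquely determined. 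A majorant/contraction-mapping argument in a complex disk around $\S$ of radius bounded uniformly below in the parameter then establishes convergence, producing a real-analytic local solution that depends analytically on $\S$.

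\textbf{Leftward extension and shooting.} Integrating the local solution backward from $\S$, I would identify an invariant region such as
\[
\mathcal R^{-}=\Bigl\{0<y<\S,\ \tr>0,\ \tfrac13\le\tom<\tfrac1y,\ \tr>\tom\Bigr\}
\]
(or an appropriate refinement enforcing the upper bound $\tr\le\tfrac1y$ implicit in~\eqref{E:VELPROP}), and verify by direct sign inspection of the right-hand sides of~\eqref{E:RHOTILDEEQN}--\eqref{E:OMEGATILDEEQN} that $\mathcal R^{-}$ is backward-invariant. This guarantees that the flow extends continuously to $y=0^+$ without hitting a second sonic point. Smoothness at the origin, forced by the $(1-3\tom)/y$ term in~\eqref{E:OMEGATILDEEQN}, then selects the limit $\tom(0;\S):=\lim_{y\to0^+}\tom(y)$ to satisfy $\tom(0;\S)=\tfrac13$. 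To implement the shooting I would show that $\S\mapsto\tom(0;\S)$ is continuous on $(2,3)$ (from standard continuous dependence on parameters together with the uniform barriers) and exhibit $\S_-,\S_+\in(2,3)$ across which this map changes sides of $\tfrac13$, for instance by analyzing the degenerations of the LP coefficients as $\S\to 2^+$ and $\S\to 3^-$. An intermediate-value argument on the connected interval $(2,3)$ then produces the desired $\S$, and $\tr(0)>0$ follows from the invariant region.

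\textbf{Rightward extension, asymptotics, and the main obstacle.} On $[\S,\infty)$ I would construct an analogous forward-invariant region, e.g.\ $\{y>\S,\ \tfrac1y<\tom<1,\ 0<\tr<\tfrac1y\}$, ensuring that the flow is global, strictly supersonic, and does not revisit the sonic locus. For the far-field behavior, linearizing~\eqref{E:RHOTILDEEQN}--\eqref{E:OMEGATILDEEQN} at the asymptotic state $(\tr,\tom)\equiv(0,1)$ suggests using monotone Lyapunov quantities such as $y^2\tr$ and $1-\tom$; integrating the resulting scalar inequalities would yield $\tom(y)\to 1$ and $\tr(y)\sim c y^{-2}$ as $y\to\infty$, establishing~\eqref{E:ASYMPFLAT}. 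Real-analyticity on the entire half-line then follows by combining the local analytic construction at $\S$ with standard analytic continuation away from it. The main obstacle is clearly the shooting step: one must propagate sufficiently tight sign and monotonicity information backward through a nonlinear system that is singular both at $y=0$ (due to the $1/y$ term) and at the removable singularity $y=\S$, and simultaneously control the parameter dependence of $\tom(0;\S)$ well enough to apply the intermediate-value theorem while keeping the trajectory inside the invariant region uniformly in $\S$ throughout the full range $(2,3)$.
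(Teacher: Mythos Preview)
Your overall architecture---shooting on the sonic point location $\S$, building a local analytic germ via the LP expansion, and handling the outer region $y>\S$ through an invariant-region argument---matches the paper. The local analytic construction and the forward (rightward) extension are essentially correct and correspond to Theorems~\ref{T:ANALYTICITY}, \ref{T:ANALYTICITYLEFT} and Proposition~\ref{P:FGE}.

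The genuine gap is in the leftward step. Your proposed region $\mathcal R^{-}$ is \emph{not} backward-invariant on the face $\tom=\tfrac13$. Indeed, at a point with $\tom=\tfrac13$, $\tr>\tom$ and $y\tom<1$, equation~\eqref{E:OMEGATILDEEQN} gives
\[
\tom'=\frac{1-3\tom}{y}+\frac{2y\tom^2}{1-y^2\tom^2}(\tr-\tom)=\frac{2y\tom^2}{1-y^2\tom^2}(\tr-\tom)>0,
\]
so decreasing $y$ drives $\tom$ \emph{below} $\tfrac13$, not above. The paper exploits exactly this: for $\S$ close to $3$ the trajectory does cross $\tfrac13$ (Lemma~\ref{L:TWOANDTHREE}(a)), and once below $\tfrac13$ it stays below (Lemma~\ref{L:PREPZ}), so there is no a priori reason the solution reaches $y=0$ at all, let alone with a finite limit. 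Consequently the map $\S\mapsto\tom(0;\S)$ you want to feed into the intermediate-value theorem is not defined on all of $(2,3)$, and the naive shooting collapses.

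The paper's substitute for this is considerably more delicate. One introduces the \emph{sonic time} $s(\S)$ (the leftmost point to which the solution extends while remaining subsonic) and shows it is only upper semi-continuous in $\S$ (Proposition~\ref{P:LOWERSEMICONT}). One then defines the set $Y$ of $\S$ for which $\omega$ crosses $\tfrac13$ (for all larger parameter values), takes $\bar y_\ast=\inf Y$, and proves $s(\bar y_\ast)=0$ by a trichotomy argument (Proposition~\ref{P:EXISTENCEY}). Even then $\lim_{z\to0}\omega(z;\bar y_\ast)=\tfrac13$ is not automatic: it is obtained by a two-sided comparison with the separate family of analytic solutions launched from the origin (Theorem~\ref{T:ANALYTICITYLEFT}), together with a monotonicity lemma for that family (Lemma~\ref{L:MONOTONE}) and an upper/lower-solution intersection argument (Lemmas~\ref{L:LOWERSOLUTION}, \ref{L:CONTR}, Proposition~\ref{P:ONETHIRD}). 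Your proposal does not contain any mechanism replacing this machinery, and the ``degenerations of the LP coefficients as $\S\to2^+,3^-$'' concern behavior near $y=\S$, not near $y=0$, so they cannot supply the sign change you need.
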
 


\begin{remark}\label{R:FRIEDMAN}
There are two known explicit solutions to~\eqref{E:RHOTILDEEQN}--\eqref{E:ASYMPFLAT}. One of them is the Friedman solution
\begin{align}\label{E:FRIEDMAN}
\tilde\rho_F(y) = \tilde\omega_F(y) \equiv \frac13
\end{align}
and the other one is the far-field solution
\begin{align}\label{E:FARFIELD}
\tilde\rho_\infty(y) = \frac1{y^2}, \ \ \tilde\omega_\infty(y) \equiv 1.
\end{align}
The Friedman solution~\eqref{E:FRIEDMAN} is the Newtonian analogue of the classical cosmological Friedman solution - it satisfies the boundary condition~\eqref{E:REGZERO}, but is not asymptotically flat. On the other hand, the far-field solution~\eqref{E:FARFIELD} is asymptotically flat, but blows up at the origin $y=0$. 
\end{remark}

If the linear equation of state~\eqref{E:EQUATIONOFSTATE} is replaced by the polytropic law $p=\varrho^\gamma$, $\gamma>1$, it is well known that there cannot exist any collapsing solutions with finite mass and energy in the regime $\gamma>\frac43$, see~\cite{DLYY}. When $\gamma=\frac43$ there exists a special class of self-similar collapsing and expanding solutions~\cite{GoWe, Makino92, FuLin,DLYY}. The nonlinear stability in the expanding case was shown in~\cite{HaJa2016-1}. When $1<\gamma<\frac43$ the authors in~\cite{GHJ} showed the existence of an infinite-dimensional class of collapsing solutions to the gravitational Euler-Poisson system.  When one considers the Euler-Poisson system with an electric (instead of gravitational) force field, the dispersive nature of the problem becomes dominant. A lot of progress has been made in recent decades, we refer the reader to~\cite{Guo1998,GeMaPa,GuPa,IoPa2013} and references therein. 


To prove Theorem~\ref{T:MAIN},
it is natural to consider the following change of variables 
\begin{align}\label{E:ZVARIABLE}
z = \frac{y}{\S}, \ \ \ \tom(y) = \omega(z), \ \ \ \tr(y) = \rho(z).  
\end{align}
The unknown sonic point $y_\ast$ is mapped to $z=1$. The system~\eqref{E:RHOTILDEEQN}--\eqref{E:OMEGATILDEEQN}
takes the form
\begin{align}
\rho'  & = -\frac{2\S^2 z \omega\rho}{1-\S^2 z^2\omega^2}(\rho-\omega) \label{E:RHOEQN}\\
\omega' & = \frac{1-3\omega}{z} + \frac{2\S^2z\omega^2}{1-\S^2z^2\omega^2}(\rho-\omega). \label{E:OMEGAEQN}
\end{align}
We shall work with this formulation for the rest of the paper and often, by abuse of terminology, refer to the point $z=1$ as the sonic point. It is now obvious from the LP sonic point expansion~\eqref{E:LPRHO}--\eqref{E:LPOMEGA} that
\be\label{E:ZEROCONST}
\omega(1) = \rho(1)= \frac1{\S},
\ee
for any solution satisfying $\omega(1),\rho(1)>0$.
If we define the infinitesimal increment
\begin{align}\label{E:INCREMENT}
\delta z: = z-1,
\end{align}
we formally assume that locally around the sonic point
\begin{align}
\rho & =  \sum_{N=0}^\infty \rho_N \dz^N, \ \ \omega  = \sum_{N=0}^\infty \omega_N \dz^N \label{E:RHOOMEGAEXPANSION} 
\end{align}
In this notation $\omega_0=\rho_0 = \frac1\S$ and~\eqref{E:LPRHO}--\eqref{E:LPOMEGA} gives us
\begin{align}\label{E:LPCONDITION}
(\rho_1,\omega_1) = (-\frac1{y_\ast}, 1-\frac2{y_\ast}), \ \ 
(\rho_2,\omega_2) = \left(\frac{-y_\ast^2+6y_\ast-7}{2y_\ast (2y_\ast-3)}, \frac{-5y_\ast^2+19y_\ast-17}{2y_\ast(2y_\ast-3)}\right)
\end{align}
For any $y_\ast>0$ we shall say that a solution of~\eqref{E:RHOEQN}--\eqref{E:OMEGAEQN} is of {\em Larson-Penston (LP)-type} if the conditions~\eqref{E:ZEROCONST} and~\eqref{E:LPCONDITION} hold. We shall prove in Theorem~\ref{T:ANALYTICITY} that for any $y_\ast>\frac32$ the LP-type conditions~\eqref{E:ZEROCONST} and \eqref{E:LPCONDITION} uniquely specify a real analytic solution in some small neighbourhood of $z=1$.
We denote this flow by $(\rho(\cdot;y_\ast),\omega(\cdot;y_\ast))$.

\subsection{Methodology}

The sonic point in the original $(t,r)$-variables corresponds to the backward cone emanating from the singularity $(0,0)$ and it takes the form $\frac r{-t} = -u(t,r)\pm\sqrt k$, $t<0$. More details on the geometric meaning of the sonic point in this context can be found for example in~\cite{BrWi1998}. Sonic points appear naturally in self-similar formulation of equations of fluid mechanics (see~\cite{BaChFe2009,ChFe2018,MRRS, Hartmann2009,JeTs} and references therein). They present a fundamental difficulty in our proof of Theorem~\ref{T:MAIN}, as we cannot use any standard ODE theory to construct a real analytic (or a $C^\infty$) solution. This is well illustrated in a recent pioneering study of sonic points for the compressible Euler system~\cite{MRRS}, where the authors use the equation of state $p=\varrho^\gamma$, $\gamma>1$. Using delicate arguments the authors~\cite{MRRS} systematically develop the existence theory for $C^\infty$ self-similar solutions of the Euler flow and such a smoothness is crucial in the proof of their nonlinear stability~\cite{MRRS2}.

The self-similar problem associated with the Euler-Poisson system leads to an ODE-system which is not autonomous (see~\eqref{E:RHOEQN}--\eqref{E:OMEGAEQN}). We also emphasise that the presence of gravity fixes exactly one invariant scale in the problem, see~\eqref{E:SCALING}.
Our proof uses in essential way dynamic invariances specific to the flow~\eqref{E:RHOEQN}--\eqref{E:OMEGAEQN}.
The sonic point separates the positive semi-axis $z\ge0$ into an inner region $[0,1]$ and an outer region $[1,\infty)$ (i.e. $[0,\S]$ and $[\S,\infty)$ in the $y$-variable). The first and the easier step is to construct an LP-type solution in the outer region satisfying the boundary condition~\eqref{E:ASYMPFLAT}. This can be done for {\em any} value of $y_\ast\in[2,3]$. The remaining key step is to find a value of $y_\ast\in[2,3]$ such that the associated LP-type solution connects $z=1$ with the {\em singular} point $z=0$ in the inner region and satisfies the boundary condition~\eqref{E:REGZERO}. More specifically, our goal is to choose $y_\ast\in[2,3]$ so that the local LP-type solution extends to the left all the way to $z=0$ and satisfies $\lim_{z\to0}\omega(z;y_\ast)=\frac13$. This motivates us to consider
\begin{align}\label{E:YDEFINTRO}
Y : = \left\{y_\ast\in[2,3]\,\big| \ \exists\, z \ \ \text{such that } \omega(z;\tilde y_\ast)=\frac13 \ \text{ for all } \ \tilde y_\ast\in[y_\ast,3]\right\}.
\end{align}
The curve $(\rho,\omega)\equiv(\frac13,\frac13)$ corresponds to the Friedman curve, see Remark~\ref{R:FRIEDMAN}. We will show that the solution curve $\omega(\cdot;\S)$ crosses the Friedman curve strictly inside the interior region and stays trapped below it for $\S$ sufficiently close to $3$. The idea is to lower the value of $\S$ to the infimum of the set $Y$ -  we set $\bar y_\ast:=\inf Y$. The idea is that $\omega(\cdot;\bar y_\ast)$ will achieve the value $\frac13$ exactly at $z=0$ and this will lead to an LP-solution.

Using the minimality of $\bar y_\ast$ it is indeed possible to show that 
the solution exists on $(0,1]$ and satisfies $\liminf_{z\to} \omega(z;\bar y_\ast)\ge \frac13$. To prove that $\lim_{z\to0}\omega(z;\bar y_\ast)=\frac13$ we use a contradiction argument in conjunction with a continuity argument. 
To explain this, it is necessary to consider the solution of the initial value problem~\eqref{E:RHOEQN}--\eqref{E:OMEGAEQN} starting from $z=0$ {\em to the right} with the initial values
\begin{align}\label{E:LEFTINITIAL}
\omega(0)=\frac13, \ \ \rho(0) = \rho_0>0.
\end{align}
Just like we did in the vicinity of the sonic point, we resort to Taylor expansion around $z=0$ to prove that (Theorem~\ref{T:ANALYTICITYLEFT}) the initial condtions~\eqref{E:LEFTINITIAL} specify a unique solution to~\eqref{E:RHOEQN}--\eqref{E:OMEGAEQN} locally to the right of $z=0$.
We denote this solution by $(\rho_-(\cdot;\rho_0),\omega_-(\cdot;\rho_0))$. 


\begin{definition}[Upper and lower solution]\label{D:ULS}
For any $y_\ast\in [2,3]$ we say that $(\rho(\cdot;y_\ast),\omega(\cdot;y_\ast))$ is an {\em upper} (resp. {\em lower}) solution at $z_0\in(0,1)$ if there exists $\rho_0>0$ such that 
\begin{align}
\rho(z_0;y_\ast) = \rho_-(z_0;\rho_0)\notag 
\end{align}
and 
\begin{align}
\omega(z_0;y_\ast)> \ (\text{resp. } <) \ \omega_-(z_0;\rho_0).\notag
\end{align}
\end{definition}


By way of contradiction we assume $\lim_{z\to0}\omega(z;\bar y_\ast)>\frac13$. 
The strategy is then the following.
\begin{itemize}

\item {\em Step 1: Upper solution.} By analysing distinct dynamic properties of the solution coming from the right $(\rho(\cdot;\bar y_\ast),\omega(\cdot;\bar y_\ast))$ and the $(\rho_-(\cdot),\omega_-(\cdot))$ emanating from the left in the region $0<z\ll1$ we show that there is a choice of $z_0\ll1$ and $\rho_0=\rho_1>\frac13$ such that $(\rho(\cdot;\bar y_\ast),\omega(\cdot;\bar y_\ast))$ is an {\em upper solution} at $z_0$ in the sense of Definition~\ref{D:ULS}.

\item {\em Step 2: Lower solution.} Using the minimality property of $\bar y_\ast = \inf Y$ and dynamic invariances associated with $(\rho_-(\cdot),\omega_-(\cdot))$ (see Lemma~\ref{L:APRIORI}) it is possible to find $y_{\ast\ast}>\bar y_\ast$ such that 
 $(\rho(\cdot;y_{\ast\ast}),\omega(\cdot;y_{\ast\ast}))$ is a {\em lower solution} at the same $z_0\ll1$ and some $\rho_0=\rho_2>0$ in the sense of Definition~\ref{D:ULS}. We emphasise that $z_0$ is the same in both steps.
 
 \item {\em Step 3: Intersection at $z=z_0$ and contradiction.} With considerable technical care and the crucial proof of strict monotonicity of the map $\rho_0\mapsto \rho_-(z;\rho_0)$ in a region $0<z_0\ll z\ll1$ (see Lemma~\ref{L:MONOTONE}),  we show that for any $y_\ast\in[\bar y_\ast, y_{\ast\ast}]$ there is a continuous map $y_\ast\mapsto \rho_0(y_\ast)$ such that 
 \[ 
 \rho(z_0;y_\ast) = \rho_-(z_0;\rho_0(y_\ast)), \ \  \rho_0(\bar y_\ast) = \rho_1, \ \ \rho_0(y_{\ast\ast}) = \rho_2.
 \]
 The Intermediate Value Theorem, Steps 1 and 2 show that there exists $y_\ast\in(\bar y_\ast,y_{\ast\ast})\subset Y$ such that $(\rho(\cdot;y_\ast),\omega(\cdot;y_\ast))$ is a solution to~\eqref{E:RHOEQN}--\eqref{E:OMEGAEQN} such that 
 $\inf_{z\in(0,1]}\omega(z;y_\ast)\ge\frac13$, which is a contradiction to the definition~\eqref{E:YDEFINTRO} of the set $Y$. 
\end{itemize}

Our work provides a general strategy to construct a solution connecting a sonic point and a singular point, such as the origin $z=0$ in this case. The crucial feature of the problem that allows us to find the solution is the contrast between the dynamics of the ``right" solution $(\rho(\cdot;\bar y_\ast),\omega(\cdot;\bar y_\ast))$ and the ``left" solution $(\rho_-(\cdot;\rho_0),\omega_-(\cdot;\rho_0))$ in the region $0<z\ll1$. This is fundamentally caused by the presence of the singular denominator $\frac1z$ on the right-hand side of~\eqref{E:OMEGAEQN}, which is a generic feature of the 3-dimensionality of the problem and radial symmetry.   

{\bf Plan of the paper.}
Section~\ref{S:TAYLORSONIC} is devoted to the proof of the local existence, uniqueness, and regularity theorems for LP-type solutions locally around the sonic point (Theorem~\ref{T:ANALYTICITY}) and around the centre $z=0$ (Theorem~\ref{T:ANALYTICITYLEFT}). In Section~\ref{S:OUTERREGION} we analyse the solution in the outer region $z>1$. The main statement is Proposition~\ref{P:FGE} which states that for any $\S\in[2,3]$ there exists a global forward-in-$z$ solution to our problem starting from the sonic point $z=1$ (i.e. $y=\S$). The most difficult part of our work is the analysis of the inner region $z\in[0,1)$ and it is contained in Section~\ref{S:INNERREGION}. In Section~\ref{SS:SC} we obtain various continuity results for the LP-type flows, including most importantly the upper semi-continuity of the so-called sonic time, see Proposition~\ref{P:LOWERSEMICONT}. In Section~\ref{SS:SETY} we introduce the crucial set $Y$ and show that the LP-type flow associated with $\bar y_\ast=\inf Y$ starting from $z=1$ exists all the way to $z=0$, see Proposition~\ref{P:EXISTENCEY}.
Qualitative properties of the flow $(\rho_-,\omega_-)$ are investigated in Section~\ref{SS:LEFT}. Finally, the key intersection argument and the proof that $\lim_{z\to0^+}\omega(z;\bar y_\ast)=\frac13$ is presented in Section~\ref{S:INTERSECTION}, see Propositions~\ref{P:ONETHIRD} and~\ref{P:GOODPROPERTIES}. Finally, in Section~\ref{S:MAIN} we prove the main result - Theorem~\ref{T:MAIN}.

\section{Local well-posedness near the sonic point and the origin}\label{S:TAYLORSONIC}

\subsection{Existence, uniqueness, and regularity near the sonic point}
Recalling~\eqref{E:RHOOMEGAEXPANSION}, 
our goal is to compute a recursive relation that expresses the vector $(\rho_N,\omega_N)$ in terms of $\rho_0,\dots,\rho_{N-1}, \omega_0,\dots, \omega_{N-1}$.
For a given function $f$ we shall write $(f)_M$ to mean the $M$-th Taylor coefficient in the expansion of $f$ around the sonic point $z=1$. In particular, 
\begin{align}
(\omega^2)_M & = \sum_{k+\ell=M} \omega_k\omega_\ell \notag \\
(\omega\rho(\rho-\omega))_M & = \sum_{k+\ell + m = M}\omega_k\rho_\ell (\rho_m-\omega_m) \notag \\
(\omega^2(\rho-\omega))_M & = \sum_{k+\ell + m = M}\omega_k\omega_\ell (\rho_m-\omega_m),\notag 
\end{align}
where the summation implicitly runs over all non-negative integers satisfying the indicated constraint.

To compute the Taylor coefficients in~\eqref{E:RHOOMEGAEXPANSION}, we first multiply~\eqref{E:RHOEQN}--\eqref{E:OMEGAEQN} by $(1-\S^2z^2\omega^2)$
\begin{align}
\omega' (1-\S^2(1+\delta z)^2\omega^2) - \left(1-3\omega\right)(1-\S^2z^2\omega^2)\frac1{1+\delta z} -
2\S^2(1+\delta z)\omega^2(\rho-\omega) & =0, \label{E:OMEGAEQN1}\\
\rho' (1-\S^2(1+\delta z)^2\omega^2) +2\S^2 (1+\delta z) \omega\rho(\rho-\omega) &=0, \label{E:RHOEQN1}
\end{align}
where we have written $z$ in the form $1+\delta z$. 


\begin{lemma}\label{L:TAYLOREXP0}
For any $N\ge0$ the following formulas hold:
\begin{align}
& (N+1)\rho_{N+1} - \S^2 \left(\sum_{k+\ell = N}(k+1)\rho_{k+1}(\omega^2)_\ell + 2\sum_{k+\ell = N-1}(k+1)\rho_{k+1}(\omega^2)_\ell
 + \sum_{k+\ell = N-2}(k+1)\rho_{k+1}(\omega^2)_\ell\right) \notag \\ 
&  + 2 \S^2 \left(\left(\omega\rho(\rho-\omega)\right)_N+\left(\omega\rho(\rho-\omega)\right)_{N-1} \right) = 0 \label{E:TAYLORRHOFORMULA} \\
& 0  = (N+1)\omega_{N+1} - \S^2 \left(\sum_{k+\ell = N}(k+1)\omega_{k+1}(\omega^2)_\ell + 2\sum_{k+\ell = N-1}(k+1)\omega_{k+1}(\omega^2)_\ell
 + \sum_{k+\ell = N-2}(k+1)\omega_{k+1}(\omega^2)_\ell\right) \notag \\
 & - (-1)^N + 3  \sum_{k+m=N}\omega_k(-1)^m 
 + \S^2 \left(\sum_{\ell+m=N-2}(-1)^m(\omega^2)_\ell+2\sum_{\ell+m=N-1}(-1)^m(\omega^2)_\ell+\sum_{\ell+m=N}(-1)^m(\omega^2)_\ell\right) \notag \\
 & - 3\S^2 \left(\sum_{k+\ell+m=N-2}(-1)^m\omega_k(\omega^2)_\ell
+ 2 \sum_{k+\ell+m=N-1}(-1)^m\omega_k(\omega^2)_\ell + \sum_{k+\ell+m=N}(-1)^m\omega_k(\omega^2)_\ell\right) \notag \\
& - 2\S^2 \left(\left(\omega^2(\rho-\omega)\right)_N+\left(\omega^2(\rho-\omega)\right)_{N-1} \right).\label{E:TAYLOROMEGAFORMULA} 
\end{align}
\end{lemma}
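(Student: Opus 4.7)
The plan is to derive the recursions directly from the polynomial identities \eqref{E:OMEGAEQN1}--\eqref{E:RHOEQN1} by substituting the formal power series \eqref{E:RHOOMEGAEXPANSION}, expanding every factor as a series in $\delta z$, forming Cauchy products, and equating the coefficient of $(\delta z)^N$ to zero. The advantage of working with \eqref{E:OMEGAEQN1}--\eqref{E:RHOEQN1} rather than the original \eqref{E:RHOEQN}--\eqref{E:OMEGAEQN} is that all singular denominators have been cleared except for the factor $1/(1+\delta z)$ in the $\omega$-equation, which is a bona fide power series in $\delta z$ of radius 1.

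The core ingredients I will expand and convolve are: $\rho' = \sum_N (N+1)\rho_{N+1}(\delta z)^N$ and similarly for $\omega'$; the polynomial $(1+\delta z)^2 = 1 + 2\delta z + (\delta z)^2$, which upon multiplication with $\omega^2$ contributes three shift-$0$, shift-$1$, shift-$2$ Cauchy sums; the geometric series $(1+\delta z)^{-1} = \sum_{m\ge 0}(-1)^m (\delta z)^m$; and the nonlinear Cauchy products $(\omega^2)_N$, $(\omega\rho(\rho-\omega))_N$, $(\omega^2(\rho-\omega))_N$ as defined in the statement. With these in hand, the Cauchy product rule gives, for any power series $A = \sum A_k (\delta z)^k$ and $B = \sum B_k (\delta z)^k$, the relation $(AB)_N = \sum_{k+\ell = N} A_k B_\ell$; the three shifts arise whenever a factor of $(1+\delta z)^j$ with $j=1$ or $j=2$ multiplies a series, simply replacing $N$ by $N-1$ or $N-2$ in the convolution.

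For \eqref{E:TAYLORRHOFORMULA} I will multiply out \eqref{E:RHOEQN1}: the term $\rho'$ contributes $(N+1)\rho_{N+1}$, the term $-\S^2\rho'(1+\delta z)^2\omega^2$ contributes the three shift-$0,1,2$ convolutions of $(k+1)\rho_{k+1}$ with $(\omega^2)_\ell$ that appear inside the large bracket, and the term $2\S^2(1+\delta z)\omega\rho(\rho-\omega)$ contributes the pair $(\omega\rho(\rho-\omega))_N + (\omega\rho(\rho-\omega))_{N-1}$. Equating the coefficient of $(\delta z)^N$ to zero gives exactly \eqref{E:TAYLORRHOFORMULA}. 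The identity \eqref{E:TAYLOROMEGAFORMULA} is obtained analogously from \eqref{E:OMEGAEQN1}: the first two blocks match those in the $\rho$-equation but with $\omega_{k+1}$ in place of $\rho_{k+1}$; the truly new contribution is the product
\[
(1-3\omega)(1-\S^2 z^2\omega^2)\,(1+\delta z)^{-1}
\]
which I expand as the product of four series, namely $1$, $-3\omega$, $1-\S^2(1+\delta z)^2\omega^2$, and $\sum_m (-1)^m(\delta z)^m$. Grouping the four resulting products yields, in the order in which they appear in the statement, the stand-alone term $-(-1)^N$, the single convolution $3\sum_{k+m=N}\omega_k(-1)^m$, the triple-shifted block involving $(\omega^2)_\ell(-1)^m$, and finally the triple-shifted block involving $\omega_k(\omega^2)_\ell(-1)^m$. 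The last piece $-2\S^2(1+\delta z)\omega^2(\rho-\omega)$ then gives the two shifted $(\omega^2(\rho-\omega))$ convolutions on the last line.

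The only obstacle is bookkeeping: one must be scrupulous about (i) the minus sign carried by each bracket in \eqref{E:OMEGAEQN1}, (ii) the three-fold shift structure produced by $(1+\delta z)^2$, (iii) the alternating signs from $(1+\delta z)^{-1}$, and (iv) the convention that any convolution sum with a negative upper shift (e.g. $\sum_{k+\ell=-1}$ or $\sum_{k+\ell=-2}$) is understood to vanish. Once these conventions are fixed, \eqref{E:TAYLORRHOFORMULA} and \eqref{E:TAYLOROMEGAFORMULA} follow by direct extraction of the coefficient of $(\delta z)^N$ with no further analysis needed; the lemma is entirely algebraic and will be invoked in the subsequent subsection to solve for $(\rho_{N+1},\omega_{N+1})$ in terms of the lower-order coefficients and to set up the majorant series used in proving real-analyticity of the flow near the sonic point.
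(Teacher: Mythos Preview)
Your proposal is correct and follows essentially the same approach as the paper: substitute the formal series \eqref{E:RHOOMEGAEXPANSION} into the cleared equations \eqref{E:OMEGAEQN1}--\eqref{E:RHOEQN1}, expand $(1+\delta z)^2$ and $(1+\delta z)^{-1}=\sum_{m\ge0}(-1)^m(\delta z)^m$, form Cauchy products, and equate the coefficient of $(\delta z)^N$ to zero. The paper carries out the same bookkeeping explicitly, first isolating the intermediate expansion of $(1-3\omega)(1-\S^2 z^2\omega^2)(1+\delta z)^{-1}$ before assembling the $\omega$-identity, exactly as you outline.
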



\begin{proof}
We plug in~\eqref{E:RHOOMEGAEXPANSION}into~\eqref{E:RHOEQN1} 
and obtain
\begin{align}
0 & = \left(\sum_{k=0}^\infty k \rho_k \dz^{k-1}\right)\left(1 - \S^2\sum_{\ell=0}^\infty(\omega^2)_\ell\left(\dz^{\ell+2}+2\dz^{\ell+1}+\dz^\ell\right) \right) \notag \\
& \ \ \ \ + 2 \S^2 \sum_{k=0}^\infty \left(\omega\rho(\rho-\omega)\right)_k\left(\dz^{k+1}+\dz^k\right) \notag \\
& = \sum_{N=0}^\infty (N+1)\rho_{N+1}\dz^N \notag \\
& \ \ \ \  - \S^2 \sum_{N=0}^\infty \left(\sum_{k+\ell = N}(k+1)\rho_{k+1}(\omega^2)_\ell + 2\sum_{k+\ell = N-1}(k+1)\rho_{k+1}(\omega^2)_\ell
 + \sum_{k+\ell = N-2}(k+1)\rho_{k+1}(\omega^2)_\ell\right) \dz^N \notag \\
 & \ \ \ \ + 2 \S^2 \sum_{N=0}^\infty \left(\left(\omega\rho(\rho-\omega)\right)_N+\left(\omega\rho(\rho-\omega)\right)_{N-1} \right) \dz^N,
\end{align}
where, by definition $\rho_k=\omega_k=0$ for $k<0$. Equating the coefficients above, we conclude that for any non-negative $N$ we have
\begin{align}
& (N+1)\rho_{N+1} - \S^2 \left(\sum_{k+\ell = N}(k+1)\rho_{k+1}(\omega^2)_\ell + 2\sum_{k+\ell = N-1}(k+1)\rho_{k+1}(\omega^2)_\ell
 + \sum_{k+\ell = N-2}(k+1)\rho_{k+1}(\omega^2)_\ell\right) \notag \\ 
&  + 2 \S^2 \left(\left(\omega\rho(\rho-\omega)\right)_N+\left(\omega\rho(\rho-\omega)\right)_{N-1} \right) = 0, \label{E:TAYLORRHO1}
\end{align}
which is precisely~\eqref{E:TAYLORRHOFORMULA}.

To prove~\eqref{E:TAYLOROMEGAFORMULA}
we first note that 
\[
\frac1{1+\delta z}= \sum_{m=0}^\infty (-1)^m\dz^m
\]
and therefore
\begin{align}
& \left(1-3\omega\right)(1-\S^2z^2\omega^2)\frac1{1+\delta z}  \notag \\
& = \left(1-3\sum_{k=0}^\infty\omega_k\dz^k\right)\left(1 - \S^2\sum_{\ell=0}^\infty(\omega^2)_\ell\left(\dz^{\ell+2}+2\dz^{\ell+1}+\dz^\ell\right) \right)  \sum_{m=0}^\infty (-1)^m\dz^m \notag \\
& = \Big(1 - 3\sum_{k=0}^\infty\omega_k\dz^k -  \S^2\sum_{\ell=0}^\infty(\omega^2)_\ell\left(\dz^{\ell+2}+2\dz^{\ell+1}+\dz^\ell\right) \notag\\
&  \ \ \ \ + 3 \S^2 \sum_{k=0}^\infty\omega_k\dz^k \sum_{\ell=0}^\infty(\omega^2)_\ell\left(\dz^{\ell+2}+2\dz^{\ell+1}+\dz^\ell\right)  \Big)
\times \sum_{m=0}^\infty (-1)^m\dz^m \notag \\
& = \sum_{N=0}^\infty (-1)^N\dz^N - 3 \sum_{N=0}^\infty \sum_{k+m=N}\omega_k(-1)^m \dz^N \notag \\
& \ \ \ \ - \S^2 \left(\sum_{\ell+m=N-2}(-1)^m(\omega^2)_\ell+2\sum_{\ell+m=N-1}(-1)^m(\omega^2)_\ell+\sum_{\ell+m=N}(-1)^m(\omega^2)_\ell\right)\dz^N \notag \\
& \ \ \ \ + 3\S^2 \left(\sum_{k+\ell+m=N-2}(-1)^m\omega_k(\omega^2)_\ell
+ 2 \sum_{k+\ell+m=N-1}(-1)^m\omega_k(\omega^2)_\ell + \sum_{k+\ell+m=N}(-1)^m\omega_k(\omega^2)_\ell\right) \dz^N. \label{E:INTERMEDIATE}
\end{align}
We plug in~\eqref{E:RHOOMEGAEXPANSION}into~\eqref{E:OMEGAEQN1}
and obtain 
\begin{align}
0 & = \left(\sum_{k=0}^\infty k \omega_k \dz^{k-1}\right)\left(1 - \S^2\sum_{\ell=0}^\infty(\omega^2)_\ell\left(\dz^{\ell+2}+2\dz^{\ell+1}+\dz^\ell\right) \right) \notag \\
& \ \ \ \  -\sum_{N=0}^\infty \left(\left(1-3\omega\right)(1-\S^2z^2\omega^2)\frac1{1+\delta z} \right)_N\dz^N \notag \\
& \ \ \ \ - 2 \S^2 \sum_{N=0}^\infty \left(\left(\omega^2(\rho-\omega)\right)_N+\left(\omega^2(\rho-\omega)\right)_{N-1} \right) \dz^N \notag \\
& = \sum_{N=0}^\infty (N+1)\omega_{N+1}\dz^N \notag \\
& \ \ \ \  - \S^2 \sum_{N=0}^\infty \left(\sum_{k+\ell = N}(k+1)\omega_{k+1}(\omega^2)_\ell + 2\sum_{k+\ell = N-1}(k+1)\omega_{k+1}(\omega^2)_\ell
 + \sum_{k+\ell = N-2}(k+1)\omega_{k+1}(\omega^2)_\ell\right) \dz^N \notag \\
& \ \ \ \  -\sum_{N=0}^\infty \left(\left(1-3\omega\right)(1-\S^2z^2\omega^2)\frac1{1+\delta z} \right)_N\dz^N \notag \\
& \ \ \ \ - 2 \S^2 \sum_{N=0}^\infty \left(\left(\omega^2(\rho-\omega)\right)_N+\left(\omega^2(\rho-\omega)\right)_{N-1} \right) \dz^N 
\notag 
\end{align}
Equating the coefficients and using~\eqref{E:INTERMEDIATE}, we conclude that for any non-negative $N$ we have
\begin{align}
& 0  = (N+1)\omega_{N+1} - \S^2 \left(\sum_{k+\ell = N}(k+1)\omega_{k+1}(\omega^2)_\ell + 2\sum_{k+\ell = N-1}(k+1)\omega_{k+1}(\omega^2)_\ell
 + \sum_{k+\ell = N-2}(k+1)\omega_{k+1}(\omega^2)_\ell\right) \notag \\
 & - (-1)^N + 3  \sum_{k+m=N}\omega_k(-1)^m 
 + \S^2 \left(\sum_{\ell+m=N-2}(-1)^m(\omega^2)_\ell+2\sum_{\ell+m=N-1}(-1)^m(\omega^2)_\ell+\sum_{\ell+m=N}(-1)^m(\omega^2)_\ell\right) \notag \\
 & - 3\S^2 \left(\sum_{k+\ell+m=N-2}(-1)^m\omega_k(\omega^2)_\ell
+ 2 \sum_{k+\ell+m=N-1}(-1)^m\omega_k(\omega^2)_\ell + \sum_{k+\ell+m=N}(-1)^m\omega_k(\omega^2)_\ell\right) \notag \\
& - 2\S^2 \left(\left(\omega^2(\rho-\omega)\right)_N+\left(\omega^2(\rho-\omega)\right)_{N-1} \right), \notag 
\end{align}
which is precisely~\eqref{E:TAYLOROMEGAFORMULA}. 
\end{proof}


\begin{lemma}
The coefficients $(\rho_i,\omega_i)$, $i=0,1$ satisfy the following formulas:
\begin{align}
\rho_0 = \omega_0 = \frac1{y_\ast}, \notag  \\
\text{Either}
\ \ (\rho_1,\omega_1) = (-\omega_0, 1-2\omega_0) \ \ \text{ or } \ \ (\rho_1,\omega_1) = (1-3\omega_0, 0)  \notag 
\end{align}
\end{lemma}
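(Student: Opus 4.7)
The plan is to derive the stated relations by specializing the recursive formulas of Lemma~\ref{L:TAYLOREXP0} to the lowest orders $N=0$ and $N=1$. The sonic point definition \eqref{E:SP}, together with the implicit positivity of $\omega_0$, gives the identity $y_\ast^2\omega_0^2 = 1$, hence $\omega_0 = 1/y_\ast$.

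For $\rho_0$, I would insert $N = 0$ into \eqref{E:TAYLORRHOFORMULA}. The sums with negative indices vanish, and after collecting what remains one obtains
\[ \rho_1(1 - y_\ast^2\omega_0^2) + 2y_\ast^2\omega_0\rho_0(\rho_0 - \omega_0) = 0. \]
By the sonic identity the $\rho_1$ term vanishes, leaving $\rho_0(\rho_0 - \omega_0) = 0$; since $\rho_0 > 0$, one concludes $\rho_0 = \omega_0 = 1/y_\ast$. The $N = 0$ specialization of \eqref{E:TAYLOROMEGAFORMULA} reduces to a trivial identity after the same substitutions and is therefore non-informative at this order.

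Advancing to $N = 1$, I would use the relations just established together with $(\omega^2)_0 = \omega_0^2$ and $(\omega^2)_1 = 2\omega_0\omega_1$. In \eqref{E:TAYLORRHOFORMULA} at $N = 1$ the $\rho_2$ contribution cancels by the sonic identity, and the remaining terms simplify to
\[ \omega_1(1 + y_\ast\rho_1) = 0, \]
so either $\omega_1 = 0$ or $\rho_1 = -\omega_0$. Carrying out the analogous simplification of \eqref{E:TAYLOROMEGAFORMULA} at $N = 1$ — again the $\omega_2$ contribution cancels via the sonic identity — produces the constitutive relation
\[ \rho_1 = -y_\ast\omega_1^2 + (y_\ast - 3)\omega_1 + 1 - 3\omega_0. \]

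To finish, I would dispatch the two branches. If $\omega_1 = 0$, the constitutive relation gives $\rho_1 = 1 - 3\omega_0$, which is the Hunter case. If $\rho_1 = -\omega_0$, substitution produces a quadratic in $u := y_\ast\omega_1$ whose discriminant is a perfect square $(y_\ast-1)^2$ and which factors as $(u - (y_\ast - 2))(u + 1) = 0$; the root $u = y_\ast - 2$ yields $\omega_1 = 1 - 2\omega_0$, which is the LP case. The spurious root $u = -1$ would force $1 + y_\ast\omega_1 = 0$, making the leading derivative of the sonic weight $1 - y_\ast^2 z^2\omega^2$ vanish at $z = 1$; this is a degenerate sub-case incompatible with a standard LP-type expansion and is discarded. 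The only subtlety I anticipate lies in correctly handling this root-selection step; the rest is a direct algebraic unwinding of the recursions from Lemma~\ref{L:TAYLOREXP0}.
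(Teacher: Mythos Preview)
Your proposal is correct and follows essentially the same route as the paper: specialize the recursions of Lemma~\ref{L:TAYLOREXP0} at $N=0$ to recover $\rho_0=\omega_0=\tfrac1{y_\ast}$, then at $N=1$ to obtain the dichotomy $\omega_1(1+y_\ast\rho_1)=0$ together with a quadratic constraint, and finally discard the spurious root $\omega_1=-\omega_0$. The only cosmetic differences are your use of the substitution $u=y_\ast\omega_1$ and your phrasing of the root-exclusion (vanishing of the leading derivative of the sonic weight), which is equivalent to the paper's remark that this root is an artifact of having multiplied the system by $1-y_\ast^2 z^2\omega^2$.
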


\begin{proof}
Letting $N=0$ in~\eqref{E:TAYLORRHOFORMULA}--\eqref{E:TAYLOROMEGAFORMULA} respectively
we obtain
\begin{align}
\left(1-\S^2\omega_0^2\right)\rho_1+2\S^2\omega_0^2(\rho_0-\omega_0) & = 0 \notag \\
\left(1-\S^2\omega_0^2\right)\left(\omega_1-1+3\omega_0\right) - 2\S^2\omega_0^2(\rho_0-\omega_0) & =0.\notag 
\end{align}
This is of course consistent with the $0$-th order sonic point condition~\eqref{E:ZEROCONST}. We now let $N=1$ in
~\eqref{E:TAYLORRHOFORMULA}--\eqref{E:TAYLOROMEGAFORMULA} and obtain respectively
\begin{align}
2\omega_1\left(\frac{\rho_1}{\omega_0}+1\right) & = 0 \label{E:SONICRHO1}\\
\frac{\omega_1^2}{\omega_0}-\frac{\omega_1}{\omega_0}+ 3\omega_1+3\omega_0+\rho_1-1 & =0.\label{E:SONICOMEGA1}
\end{align}
From~\eqref{E:SONICRHO1} we have two possibilities: either $\rho_1=-\omega_0$ or $\omega_1=0$.
If $\rho_1=-\omega_0$ we obtain from~\eqref{E:SONICOMEGA1}
\begin{align}
0 & = \frac{\omega_1^2}{\omega_0}-\frac{\omega_1}{\omega_0}+ 3\omega_1+2\omega_0-1 
 = \left(\omega_1 + 2\omega_0-1\right)\left(\frac{\omega_1}{\omega_0}+1\right). \notag 
\end{align}
In this case 
$
\omega_1 = 1- 2 \omega_0
$
(which corresponds to the Larson-Penston solution) or
$
\omega_1 = - \omega_0.$ We disregard the latter possibility as it corresponds to a trivial solution that appears  due to
multiplication of~\eqref{E:RHOEQN}--\eqref{E:OMEGAEQN} by  $1-\S^2z^2\omega^2$. 
If on the other hand $\omega_1=0$  we obtain 
$
\rho_1 = 1-3\omega_0. \notag 
$
from~\eqref{E:SONICOMEGA1}.
\end{proof}

By a careful tracking of top-order terms in Lemma~\ref{L:TAYLOREXP0} we will next express $(\rho_N,\omega_N)$ as a function of the Taylor coefficients with indices less or equal to $N-1$.


\begin{lemma}\label{L:MATRIXA}
Let $N\ge2$. Then the following identity holds:
\begin{align}
\mathcal A_N(\omega_0,\omega_1,\rho_1) \begin{pmatrix} \rho_N \\ \omega_N \end{pmatrix} = \begin{pmatrix} \mathcal F_N \\ \mathcal G_N \end{pmatrix}, \notag 
\end{align}
where
\begin{align}\label{E:MATRIXIDENTITY}
\mathcal A_N(\omega_0,\omega_1,\rho_1) = 
\begin{pmatrix} -2N+2-2N\frac{\omega_1}{\omega_0} &  -\frac{2\rho_1}{\omega_0}-2 \\
-2 & -2N-4+\frac2{\omega_0}-(2N+2)\frac{\omega_1}{\omega_0} \end{pmatrix}
\end{align}
and 
\begin{align}
\mathcal F_N & = \mathcal F_N[\rho_0,\omega_0;\rho_1,\omega_1;\dots \rho_{N-1},\omega_{N-1}] \notag \\
\mathcal G_N & = \mathcal G_N[\rho_0,\omega_0;\rho_1,\omega_1;\dots \rho_{N-1},\omega_{N-1}] \notag
\end{align} 
are nonlinear polynomials of the first $N-1$ Taylor coefficients given explicitly 
by the formulas~\eqref{E:FNDEF} and~\eqref{E:GNDEF} below.
\end{lemma}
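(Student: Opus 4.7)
The plan is to take the recurrences~\eqref{E:TAYLORRHOFORMULA} and~\eqref{E:TAYLOROMEGAFORMULA} at index $N$ and isolate the linear dependence on $(\rho_N,\omega_N)$, collecting the rest into nonlinear polynomials $\mathcal{F}_N,\mathcal{G}_N$ in the coefficients $(\rho_j,\omega_j)_{j\le N-1}$. The central algebraic identity driving every cancellation is the sonic relation $\S^2\omega_0^2=1$ (which follows from $\omega_0=\tfrac1{y_\ast}$). A priori the recurrences at level $N$ involve $(\rho_{N+1},\omega_{N+1})$, but the sonic relation forces these top-order terms to cancel exactly, leaving a $2\times 2$ linear system for $(\rho_N,\omega_N)$ with matrix $\mathcal A_N$ and right-hand side formed from strictly lower-order data.

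For~\eqref{E:TAYLORRHOFORMULA} I would scan the three triple sums and the term $2\S^2(\omega\rho(\rho-\omega))_N$ and list every contribution containing $\rho_N$, $\omega_N$ or $\rho_{N+1}$. The $(N+1)\rho_{N+1}$ piece is annihilated by the $(k,\ell)=(N,0)$ term of the first sum via $\S^2\omega_0^2=1$. The surviving $\rho_N$ contributions come from $(k,\ell)=(N-1,1)$ in the first sum, $(k,\ell)=(N-1,0)$ in the second sum, and from the $m=N$, $k=\ell=0$ term in $(\omega\rho(\rho-\omega))_N$. The $\omega_N$ contributions come from $(k,\ell)=(0,N)$ in the first sum (via the factor $(\omega^2)_N\ni 2\omega_0\omega_N$) and from the $-\omega_m$ at $m=N$ in $(\omega\rho(\rho-\omega))_N$. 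Collecting the coefficients and using $\S^2\omega_0^2=1$ to simplify produces the first row $(-2N+2-2N\omega_1/\omega_0,\ -2-2\rho_1/\omega_0)$ of $\mathcal A_N$, with everything else assembling into $\mathcal{F}_N$.

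The same procedure for~\eqref{E:TAYLOROMEGAFORMULA} is heavier because of the extra sums generated by the expansion of $(1-3\omega)(1-\S^2 z^2\omega^2)/(1+\delta z)$. After the cancellation of $(N+1)\omega_{N+1}$ against the $(k,\ell)=(N,0)$ piece of the first sum, the $\omega_N$ contributions must be tracked through: the two top terms $(k,\ell)=(N-1,1)$ and $(0,N)$ of the first sum; the term $(k,\ell)=(N-1,0)$ of the second sum; the $k=N$ term of $3\sum_{k+m=N}\omega_k(-1)^m$; the $\ell=N$ term of the third sum in the $+\S^2$ block; the $(k,\ell)=(N,0)$ and $(0,N)$ terms of the third sum in the $-3\S^2$ block; and the $m=N$ term of $(\omega^2(\rho-\omega))_N$. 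A $\rho_N$ contribution appears only through $(\omega^2(\rho-\omega))_N$ (from $m=N,k=\ell=0$), yielding coefficient $-2$. Summing all these contributions and again invoking $\S^2\omega_0^2=1$ reproduces exactly the second row $(-2,\ -2N-4+2/\omega_0-(2N+2)\omega_1/\omega_0)$ of $\mathcal A_N$; the remainder is $\mathcal{G}_N$.

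The hypothesis $N\ge 2$ is precisely what guarantees that the sums of total index $N-1$ and $N-2$ cannot generate spurious top-order contributions, since every index appearing in them is at most $N-1$. The main obstacle is therefore purely combinatorial bookkeeping: one must catalogue, sum by sum, every admissible index triple $(k,\ell,m)$ that could yield a $\rho_N$ or $\omega_N$ factor, verify no cross-term has been missed, and confirm that what remains is genuinely polynomial in the lower-order coefficients. Once this enumeration is carried out, identity~\eqref{E:MATRIXIDENTITY} and the explicit formulas~\eqref{E:FNDEF} and~\eqref{E:GNDEF} for $\mathcal F_N,\mathcal G_N$ follow by direct inspection.
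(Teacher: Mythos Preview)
Your proposal is correct and follows essentially the same approach as the paper. You identify precisely the same mechanism: the sonic relation $\S^2\omega_0^2=1$ cancels the $(N+1)$-level terms, and then a careful enumeration of every index configuration in the sums of~\eqref{E:TAYLORRHOFORMULA}--\eqref{E:TAYLOROMEGAFORMULA} that can produce a factor of $\rho_N$ or $\omega_N$ yields the two rows of $\mathcal A_N$; your term-by-term accounting (including the use of $\rho_0=\omega_0$ to kill the $(k,\ell,m)=(N,0,0)$ and $(0,N,0)$ contributions in the cubic expressions) matches the paper's computation exactly, and your remark on the role of $N\ge 2$ is on point.
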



\begin{proof}
We first isolate all the coefficients in~\eqref{E:TAYLORRHO1} that contain contributions from vectors $(\rho_{N+1},\omega_{N+1})$ and $(\rho_{N},\omega_{N})$. 
For $N\ge 2$ we obtain
\begin{align}
0 & =   (N+1)\rho_{N+1} - 
\S^2\left((N+1)\rho_{N+1}(\omega^2)_0 + 2N \rho_N\omega_0\omega_1
+2 \rho_1 \omega_0\omega_N + 2 N \rho_N (\omega^2)_0\right) \notag \\
& \ \ \ \ + 2\S^2 \omega_0\rho_0(\rho_N-\omega_N) - \mathcal F_N \notag \\
& =  (N+1)\rho_{N+1} - 
\omega_0^{-2}\left((N+1)\rho_{N+1}\omega_0^2 + 2N \rho_N\omega_0\omega_1
+2 \rho_1 \omega_0\omega_N + 2 N \rho_N \omega_0^2\right) \notag \\
& \ \ \ \ + 2\omega_0^{-2} \omega_0\rho_0(\rho_N-\omega_N) - \mathcal F_N \notag \\
& = \left(-2N+2 - 2N\frac{\omega_1}{\omega_0}\right)\rho_N + \left(-\frac{2\rho_1}{\omega_0}-2\right)\omega_N - \mathcal F_N
\label{E:MATRIXONE}
\end{align}
where we have used
\begin{align}
\mathcal F_N = & \S^2 \left(\sum_{k+\ell = N \atop 0<k<N-1}(k+1)\rho_{k+1}(\omega^2)_\ell 
+ \sum_{m+n=N \atop 0<m<N}\rho_1\omega_m\omega_n  + 2\sum_{k+\ell = N-1 \atop k<N-1}(k+1)\rho_{k+1}(\omega^2)_\ell
 + \sum_{k+\ell = N-2}(k+1)\rho_{k+1}(\omega^2)_\ell\right) \notag \\
 & + 2\S^2 \left(\sum_{k+\ell+n=N \atop n<N }\omega_k\rho_\ell(\rho_n-\omega_n)+\left(\omega\rho(\rho-\omega)\right)_{N-1} \right). \label{E:FNDEF}
\end{align}

We now isolate all the coefficients in~\eqref{E:TAYLOROMEGAFORMULA} that contain contributions from vectors $(\rho_{N+1},\omega_{N+1})$ and $(\rho_{N},\omega_{N})$. For $N\ge2$ we obtain
\begin{align}
0 & = (N+1)\omega_{N+1} -\S^2\left((N+1)\omega_{N+1}\omega_0^2 + 2N \omega_0\omega_1\omega_N+2\omega_0\omega_1\omega_N + 2N \omega_0^2 \omega_N\right)  \notag \\
& \ \ \ \ - 2\S^2 \omega_0^2\left(\rho_N-\omega_N\right) + 3\omega_N + \S^2\left(2\omega_0\omega_N-9\omega_0^2\omega_N\right) - \mathcal G_N \notag \\
& = -2\rho_N +\left(-2N-4+\frac2{\omega_0}-(2N+2)\frac{\omega_1}{\omega_0}\right)\omega_N - \mathcal G_N,
\label{E:MATRIXTWO}
\end{align}
where
\begin{align}
\mathcal G_N = & \S^2 \left(\sum_{k+\ell = N \atop 0<k<N-1}(k+1)\omega_{k+1}(\omega^2)_\ell 
+ \sum_{m+n=N \atop 0<m<N}\omega_1\omega_m\omega_n  + 2\sum_{k+\ell = N-1 \atop k<N-1}(k+1)\omega_{k+1}(\omega^2)_\ell
 + \sum_{k+\ell = N-2}(k+1)\omega_{k+1}(\omega^2)_\ell\right) \notag \\
 & - 2\S^2 \left(\sum_{k+\ell+n=N \atop n<N }\omega_k\omega_\ell(\rho_n-\omega_n)+\left(\omega^2(\rho-\omega)\right)_{N-1} \right) \notag \\
 & + (-1)^N - 3 \sum_{k+m=N\atop k<N}\omega_k(-1)^m \notag \\
 & - \S^2 \left(\sum_{\ell+m=N-2}(-1)^m(\omega^2)_\ell+2\sum_{\ell+m=N-1}(-1)^m(\omega^2)_\ell
 +\sum_{\ell+m=N \atop \ell<N}(-1)^m(\omega^2)_\ell + \sum_{k+n=N\atop 0<k<N}\omega_k\omega_n\right) \notag \\
 &+ 3\S^2 \Big(\sum_{k+\ell+m=N-2}(-1)^m\omega_k(\omega^2)_\ell
+ 2 \sum_{k+\ell+m=N-1}(-1)^m\omega_k(\omega^2)_\ell \notag \\
& \ \ \ \ \qquad  + \sum_{k+\ell+m=N \atop k\neq N, \ell\neq N}(-1)^m\omega_k(\omega^2)_\ell
+ \sum_{k+\ell+m=N}(-1)^m\omega_0\sum_{a+b=N\atop 0<a<N }\omega_a\omega_b\Big) \label{E:GNDEF}.
\end{align}
Finally, equations~\eqref{E:MATRIXONE} and~\eqref{E:MATRIXTWO} give~\eqref{E:MATRIXIDENTITY}.
\end{proof}


\begin{lemma}\label{L:LP}
Let $y_\ast>0$ be given.
Then the matrix 
\begin{align}
\mathcal A_N^{LP} : = \mathcal A_N(\omega_0,-\omega_0,1-2\omega_0) \notag 
\end{align}
associated with an LP-type solution is singular if and only if 
\begin{align}\label{E:LPSING}
y_\ast =1 \ \ \text{ or } \ \ \S = \frac{N+1}{N}, \ \ \text{ for some } N\ge2.
\end{align}
As a consequence, the matrix $\mathcal A_N^{LP}$ is invertible for any $\S>\frac32$ for all $N\ge2$.
\end{lemma}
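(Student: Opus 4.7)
The plan is a direct computation: substitute the Larson-Penston values of the initial coefficients into the explicit matrix $\mathcal{A}_N$ given in~\eqref{E:MATRIXIDENTITY}, exploit a crucial cancellation in the off-diagonal entry, and then read off the zeros of the determinant. Recall that the LP conditions~\eqref{E:LPCONDITION} together with $\omega_0=\rho_0=\frac{1}{y_\ast}$ amount to
$$\omega_0 = \frac{1}{y_\ast}, \qquad \omega_1 = 1-\frac{2}{y_\ast}, \qquad \rho_1 = -\frac{1}{y_\ast}.$$
So one has $\omega_1/\omega_0 = y_\ast-2$, $\rho_1/\omega_0 = -1$ and $1/\omega_0 = y_\ast$.

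The first key observation I would record is that the upper-right entry collapses:
$$-\frac{2\rho_1}{\omega_0}-2 \;=\; 2-2 \;=\; 0.$$
Thus $\mathcal{A}_N^{LP}$ is lower triangular. I would then compute the diagonal entries: the $(1,1)$ entry becomes
$$-2N+2-2N(y_\ast-2) = 2(N+1)-2Ny_\ast = -2N\bigl(y_\ast-\tfrac{N+1}{N}\bigr),$$
and the $(2,2)$ entry simplifies, after expanding $-(2N+2)(y_\ast-2)$ and combining, to
$$-2N-4+2y_\ast-(2N+2)(y_\ast-2) = 2N(1-y_\ast).$$
Hence
$$\det \mathcal{A}_N^{LP} \;=\; 4N\bigl(N+1-Ny_\ast\bigr)(1-y_\ast),$$
which vanishes if and only if $y_\ast=1$ or $y_\ast=\frac{N+1}{N}$, as claimed in~\eqref{E:LPSING}.

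For the consequence, I would simply note that the sequence $\frac{N+1}{N}$ is decreasing in $N\ge 2$ with maximum $\frac{3}{2}$ attained at $N=2$. Therefore $y_\ast>\frac{3}{2}$ rules out both $y_\ast=1$ and $y_\ast=\frac{N+1}{N}$ for every $N\ge 2$, and invertibility of $\mathcal{A}_N^{LP}$ follows. There is no genuine obstacle in this lemma: the entire content is the algebraic miracle that the $(1,2)$ entry vanishes for LP data, which reduces the $2\times 2$ determinant to a transparent product. (It is worth remarking, in passing, that this same cancellation is precisely what makes the Hunter branch distinct, since the alternative choice $(\rho_1,\omega_1)=(1-3\omega_0,0)$ does not kill the off-diagonal entry and leads to a different singular set.)
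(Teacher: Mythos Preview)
Your proof is correct and essentially identical to the paper's: both substitute the LP data into $\mathcal A_N$, observe that the $(1,2)$ entry vanishes so the matrix is lower triangular, and read off the determinant as the product of the diagonal entries. The only cosmetic difference is that the paper expresses the entries in terms of $\omega_0$ (writing $\mathcal A_N^{LP}=2\begin{pmatrix} N(1-\tfrac1{\omega_0})+1 & 0\\ -1 & N(1-\tfrac1{\omega_0})\end{pmatrix}$) whereas you work directly in $y_\ast=\tfrac1{\omega_0}$, but the resulting determinant and singular set are the same.
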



\begin{proof}
Since in the case of LP-type solutions (i.e. $(\rho_1,\omega_1) = (-\omega_0, 1-2\omega_0)$) the matrix $\mathcal A_N$ takes the form
\begin{align}
\mathcal A_N^{LP} = 
2\begin{pmatrix} N(1-\frac1{\omega_0})+1 &  0 \\
-1 &  N(1-\frac1{\omega_0}) \end{pmatrix} \notag 
\end{align}
In particular
\begin{align}
\frac14\det \mathcal A^{LP}_N & = N\left(N(1-\frac1{\omega_0})+1 \right)\left(1-\frac1{\omega_0}\right). \notag 
\end{align}
Therefore,  the matrix $\mathcal A_N^{LP}$ is singular when $\omega_0=1$ or $\omega_0=\frac N{N+1}$, $N\ge2$. This  together with~\eqref{E:ZEROCONST} implies~\eqref{E:LPSING}. Since $ \frac{N}{N+1}\ge\frac23$ for all $N\ge2$ $\mathcal A_N^{LP}$ is invertible for
any $0<\omega_0<\frac23$ and $N\ge2$. 
\end{proof}

\begin{remark}[Hunter solutions]
In the case of Hunter-type solutions (i.e. $ (\rho_1,\omega_1) = (1-3\omega_0, 0)$) the matrix $\mathcal A_N$ takes the form
\begin{align}
\mathcal A_N^{H} = 
2\begin{pmatrix} -N+1 &  2-\frac1{\omega_0} \\
-1 & -N-2+\frac1{\omega_0} \end{pmatrix} \notag 
\end{align}
In particular
\begin{align}
\frac14\det \mathcal A^{H}_N &=  (N-1)(N+2-\frac1{\omega_0})+2-\frac1{\omega_0} \notag  \\
& = N (N+1-\frac1{\omega_0}) \label{E:DETHUNTER}
\end{align}
It follows that the matrix $\mathcal A_N^H$ is singular if and only if $\S=N+1$ for some $N\ge2$.
\end{remark}

For any $\S>\frac32$ consider the formal series~\eqref{E:RHOOMEGAEXPANSION} of LP-type, i.e. with conditions~\eqref{E:ZEROCONST} and~\eqref{E:LPCONDITION} satisfied.
By Lemmas~\ref{L:MATRIXA} and~\ref{L:LP} we have the explicit relations
\begin{align}
\rho_N & = \frac1{2\left(N(1-\frac1{\omega_0})+1\right)} \mathcal F_N \label{E:RHONFN}\\
\omega_N & = \frac1{2N(1-\frac1{\omega_0})} \mathcal G_N
+ \frac1{2N(1-\frac1{\omega_0})\left(N(1-\frac1{\omega_0})+1\right)} \mathcal F_N \label{E:OMEGANFNGN}
\end{align}

The assumption $\omega_0<\frac23$ (recall $\omega_0=\frac1{\S}$ by~\eqref{E:ZEROCONST}) implies that there exists a universal constant $\alpha>0$ such that 
\begin{align}
\lv \rho_N\rv & \le \frac{\alpha}{N\left(\frac23-\omega_0\right)} \lv \mathcal F_N\rv \label{E:RHONBOUNDONE}\\
\lv \omega_N\rv & \le \frac{\alpha}{N\left(\frac23-\omega_0\right)} \left(\lv \mathcal G_N\rv + \frac1N  \lv \mathcal F_N\rv \right).
\label{E:OMEGANBOUNDONE}
\end{align}

Our goal is to show 
that the formal power series $\sum_{N=0}^\infty \rho_N\dz^N$, $\sum_{N=0}^\infty \omega_N\dz^N$ converge. To that end we need some simple technical bounds which will be important in establishing convergence later on.
\begin{lemma}\label{L:COMB}
There exists a constant $c>0$ such that for all $N\in\mathbb N$ the following bounds hold
\begin{align}
 \sum_{k=1}^{N-1}\frac{1}{k^2(N-k)^2} &\le c N^{-2} \label{E:COMB1} \\
\sum_{k+\ell + m = N \atop 0<k,\ell,m}\frac1{k^2\ell^2m^2} &\le c N^{-2}, \label{E:COMB2} \\
\sum_{k+\ell + m = N \atop 0<k,\ell,m}\frac1{k\ell^2m^2} &\le c N^{-1}. \label{E:COMB3}
\end{align}
\end{lemma}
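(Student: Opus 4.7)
All three bounds follow from the same elementary principle: in a sum over indices whose total is $N$, at least one index must be comparable to $N$, and that index can be isolated to extract a factor of $N^{-1}$ or $N^{-2}$, leaving behind a convergent series.

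For \eqref{E:COMB1}, the plan is to split the sum at $k=\lfloor N/2\rfloor$. On the left piece $1\le k\le N/2$ we have $N-k\ge N/2$, so $(N-k)^{-2}\le 4N^{-2}$, and the remaining sum $\sum_{k\ge 1} k^{-2}$ is a finite constant. The right piece $N/2<k\le N-1$ is symmetric under $k\mapsto N-k$. Adding the two pieces yields \eqref{E:COMB1} with an absolute constant $c$.

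For \eqref{E:COMB2}, the plan is to observe that if $k+\ell+m=N$ with all three strictly positive, then at least one of $k,\ell,m$ is $\ge N/3$. Partitioning the index set accordingly into three subsets and using the symmetry $k\leftrightarrow\ell\leftrightarrow m$ of the summand, it suffices to treat the subset $\{k\ge N/3\}$. There $k^{-2}\le 9N^{-2}$, and the remaining double sum is bounded by
\begin{equation*}
\sum_{\ell,m\ge 1}\frac1{\ell^2 m^2}=\Bigl(\sum_{j\ge 1}\frac1{j^2}\Bigr)^{\!2}=\frac{\pi^4}{36}<\infty,
\end{equation*}
which produces the estimate $cN^{-2}$ after multiplying by $3$.

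For \eqref{E:COMB3}, the summand is no longer symmetric, so the three cases must be handled separately. When $k\ge N/3$, we extract $k^{-1}\le 3N^{-1}$ and bound the residual sum by $\bigl(\sum j^{-2}\bigr)^{\!2}<\infty$, yielding the required $cN^{-1}$. When $\ell\ge N/3$ (or symmetrically $m\ge N/3$), we instead extract $\ell^{-2}\le 9N^{-2}$; the residual double sum $\sum_{k+m\le N}\frac{1}{k m^2}$ is bounded by $C\log N$ (sum in $k$ first, then in $m$), so this contribution is $O(N^{-2}\log N)$, which is absorbed into $cN^{-1}$. Summing the three contributions gives \eqref{E:COMB3}. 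No step presents serious difficulty; the only mild point is choosing the correct variable to isolate in \eqref{E:COMB3}, which is the one appearing with a single power.
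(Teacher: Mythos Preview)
Your proof is correct. The approach differs from the paper's, though both are elementary. The paper handles \eqref{E:COMB1} via the partial-fraction identity $\frac{1}{k(N-k)}=\frac{1}{N}\bigl(\frac{1}{k}+\frac{1}{N-k}\bigr)$, squaring and summing; it then proves \eqref{E:COMB2} and \eqref{E:COMB3} by iterating \eqref{E:COMB1}: first sum over $\ell+m=N-k$ to collapse the inner pair to $(N-k)^{-2}$, then apply \eqref{E:COMB1} again. Your route is instead a direct pigeonhole argument in each case (some index must be $\gtrsim N$, isolate it, and control the rest by a convergent series). The paper's iteration is slightly slicker in that it avoids the mildly asymmetric case analysis you need for \eqref{E:COMB3} (and it never produces the harmless $\log N$ loss), whereas your argument has the virtue of being self-contained for each inequality and not relying on any algebraic identity.
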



\begin{proof}
We note that 
\begin{align}
 \sum_{k=1}^{N-1}\frac{1}{k^2(N-k)^2}
 =  \sum_{k=1}^{N-1}\frac1{N^2}\left(\frac{1}{k}+\frac{1}{N-k}\right)^2 \le \frac2{N^2}\sum_{k=1}^\infty \frac1{k^2} \lesssim N^{-2}\notag 
\end{align}
and this proves~\eqref{E:COMB1}.
Next
\begin{align}
\sum_{k+\ell + m = N \atop 0<k,\ell,m}\frac1{k^2\ell^2m^2} 
& \le \sum_{k=1}^{N-1}\frac1{k^2}\sum_{\ell+m=N-k \atop 0<\ell,m} \frac1{\ell^2m^2}  
 \lesssim \sum_{k=1}^N\frac1{k^2}\frac1{(N-k)^2} 
 \lesssim  N^{-2}, \notag 
\end{align}
where we have used the already established bound~\eqref{E:COMB1} in each of the last two lines above. This proves~\eqref{E:COMB2}. 
Finally, 
\begin{align}
\sum_{k+\ell + m = N \atop 0<k,\ell,m}\frac1{k\ell^2m^2} 
 & \le \sum_{k=1}^N\frac1{k}\sum_{\ell+m=N-k \atop 0<\ell,m} \frac1{\ell^2m^2}  
 \lesssim \sum_{k=1}^N\frac1{k}\frac1{(N-k)^2} \notag \\
& =  \sum_{k=1}^N\frac1N \left(\frac1k+\frac1{N-k}\right)\frac1{N-k} 
\lesssim \frac1N, \notag 
\end{align}
and this completes the proof of~\eqref{E:COMB3}. 
\end{proof}


\begin{lemma}\label{L:KEYINDUCTION}
Let $\S>\frac32$. Then there exists a constant $C_\ast>0$ such that if $C>C_\ast$ and the following assumptions hold:
\begin{align}
\lv \rho_k \rv & \le \frac{C^{k-1}}{k^2}, \ \ 1\le k \le N-1 \label{E:ASS1}\\
\lv \omega_k \rv & \le \frac{C^{k-1}}{k^2}, \ \ 1\le k \le N-1, \label{E:ASS2}
\end{align}
then 
\begin{align}
\lv \mathcal F_N\rv & \le \frac{\beta}{\omega_0^2} \frac{C^{N-2}}{N}, \label{E:FNBOUND} \\ 
\lv \mathcal G_N\rv & \le  \frac{\beta}{\omega_0^2} \frac{C^{N-2}}{N}. \label{E:GNBOUND} 
\end{align}
for some universal constant $\beta>0$.
\end{lemma}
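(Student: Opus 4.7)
The plan is to prove~\eqref{E:FNBOUND}--\eqref{E:GNBOUND} by substituting the inductive bounds~\eqref{E:ASS1}--\eqref{E:ASS2} into the explicit expressions~\eqref{E:FNDEF}--\eqref{E:GNDEF} for $\mathcal F_N$ and $\mathcal G_N$, and summing the resulting index-convolutions using the combinatorial inequalities of Lemma~\ref{L:COMB}. The factor $\omega_0^{-2}$ in the target bound will be extracted at the outset from the $\S^2=\omega_0^{-2}$ prefactors present throughout~\eqref{E:FNDEF}--\eqref{E:GNDEF}, reducing the task to bounding each of the remaining inner sums by a universal constant times $C^{N-2}/N$.

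First I would establish auxiliary pointwise bounds on the building-block convolutions $(\omega^2)_M$, $(\omega\rho(\rho-\omega))_M$, and $(\omega^2(\rho-\omega))_M$ by splitting each convolution into a \emph{boundary} part, where at least one summation index vanishes and therefore contributes an explicit factor of $\omega_0=\rho_0$, and an \emph{interior} part in which every index lies in $\{1,\dots,M-1\}$. The inductive hypothesis together with~\eqref{E:COMB1} yields
\[
|(\omega^2)_M| \le 2\omega_0\frac{C^{M-1}}{M^2} + \frac{c\,C^{M-2}}{M^2},\qquad M\ge 1,
\]
together with analogous triple-convolution bounds whose interior part is controlled by $c\,C^{M-3}/M^2$ via~\eqref{E:COMB2}.

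Next I would substitute these pointwise bounds into each of the four sums in~\eqref{E:FNDEF} and the many sums in~\eqref{E:GNDEF}. The prefactor $(k+1)$ in each sum is absorbed into the inductive bound on $\rho_{k+1}$ and $\omega_{k+1}$ as $(k+1)|\rho_{k+1}|\le C^{k}/(k+1)$, and combining with the convolution estimates and summing over the outer index via~\eqref{E:COMB3} and its double-convolution analogue produces bounds of the target order $C^{N-2}/N$. The triple-convolution contributions arising from the $2\S^2$-parts yield subdominant $O(C^{N-3}/N^2)$ terms via~\eqref{E:COMB2}, while the $(-1)^m$-terms appearing in~\eqref{E:GNDEF} reduce to geometric convolutions of the same or smaller order.

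The main obstacle is careful bookkeeping: the formulas~\eqref{E:FNDEF}--\eqref{E:GNDEF} contain a large number of nested sums with subtly different index restrictions, and one must verify that no single term---in particular those involving the highest accessible coefficients $\rho_{N-1},\omega_{N-1}$, which enter with $O(N)$ multipliers---violates the target $C^{N-2}/N$ scaling. Once every individual contribution is estimated correctly, the subdominant corrections are absorbed into a single universal constant $\beta$ by choosing $C_\ast$, hence $C$, sufficiently large in terms of the universal combinatorial constants from Lemma~\ref{L:COMB} and the range $\omega_0=1/y_\ast\in(0,2/3)$ guaranteed by $\S>\tfrac32$. This closes the estimate and yields~\eqref{E:FNBOUND}--\eqref{E:GNBOUND}.
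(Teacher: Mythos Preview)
Your proposal is correct and follows essentially the same route as the paper: both arguments substitute the inductive bounds into the explicit formulas~\eqref{E:FNDEF}--\eqref{E:GNDEF}, split convolutions such as $(\omega^2)_\ell$ into a boundary part (one index equal to zero) and an interior part, absorb the $(k+1)$ prefactor via $(k+1)|\rho_{k+1}|\le C^k/(k+1)$, and then close with Lemma~\ref{L:COMB}; the alternating-sign sums in $\mathcal G_N$ are handled in both cases by a geometric-sum estimate that forces $C>C_\ast$. The only difference is organisational---you first isolate bounds on the building blocks $(\omega^2)_M$ and the cubic convolutions and then substitute, whereas the paper estimates each term of~\eqref{E:FNDEF}--\eqref{E:GNDEF} in place---but the underlying mechanism is identical.
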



\begin{proof}
We start with the estimate on the first term in the definition~\eqref{E:FNDEF} of $\mathcal F_N$.
\begin{align}
\sum_{k+\ell = N \atop 0<k<N-1}(k+1)\rho_{k+1}(\omega^2)_\ell
&\lesssim C^{N-2}\sum_{k+\ell + m = N \atop 0<k<N-1, 0<\ell,m}\frac1{(k+1)\ell^2m^2}
+  C^{N-2} \sum_{k=1}^{N-1}\frac1{k+1}\frac1{(N-k)^2}\notag \\
& \lesssim \frac{C^{N-2}}{N}, \label{E:BOUNDONE}
\end{align}
where we have used Lemma~\ref{L:COMB}, estimate~\eqref{E:COMB3}. By the same argument we have
\begin{align}
\lv 2\sum_{k+\ell = N-1 \atop k<N-1}(k+1)\rho_{k+1}(\omega^2)_\ell\rv & \lesssim  \frac{C^{N-3}}{N}, \ \ 
\lv \sum_{k+\ell = N-2}(k+1)\rho_{k+1}(\omega^2)_\ell\rv  \lesssim  \frac{C^{N-4}}{N} \notag 
\end{align}
Similarly, using Lemma~\ref{L:COMB}, bound~\eqref{E:COMB2}
\begin{align}
\lv \sum_{k+\ell+n=N \atop n<N }\omega_k\rho_\ell(\rho_n-\omega_n)\rv & \lesssim  \frac{C^{N-3}}{N^2}, \ \ 
\lv \left(\omega\rho(\rho-\omega)\right)_{N-1}\rv  \lesssim  \frac{C^{N-4}}{N^2} .\label{E:BOUNDTHREE}
\end{align}
Finally 
\begin{align}
\lv \sum_{m+n=N \atop 0<m<N}\rho_1\omega_m\omega_n  \rv
\le C^{N-2}\sum_{m=1}^{N-1} \frac1{m^2(N-m)^2} \lesssim \frac{C^{N-2}}{N^2}, \label{E:BOUNDFOUR}
\end{align}
where we have used~\eqref{E:COMB1}.
From the definition~\eqref{E:FNDEF} of $\mathcal F_N$ and bounds~\eqref{E:BOUNDONE}--\eqref{E:BOUNDFOUR} we conclude that there exists a universal constant $\beta$ such that~\eqref{E:FNBOUND} holds.

We now turn our attention to the source term $\mathcal G_N$. By estimates analogous to~\eqref{E:BOUNDONE}--\eqref{E:BOUNDFOUR} we conclude that the absolute value of the first two lines of~\eqref{E:GNDEF} is bounded by 
\[
\frac{\beta}{\omega_0^2} \frac{C^{N-2}}{N}.
\]
Clearly 
\begin{align}
\lv  (-1)^N - 3 \sum_{k+m=N\atop k<N}\omega_k(-1)^m \rv
& \lesssim 1 + \sum_{k=0}^{N-1} \lv\omega_k\rv 
 \lesssim 1+ \sum_{k=1}^{N-1} \frac{C^{k-1}}{k^2} \notag \\
& = 1 + \sum_{k=1}^{\lfloor \frac N2\rfloor} \frac{C^k}{k^2}+\sum_{k=\lfloor \frac N2\rfloor+1}^{N-1} \frac{C^{k-1}}{k^2} \notag \\
& \lesssim 1 + \frac {C^{N-2}}{N} + C^{\lfloor \frac N2\rfloor} 
 \lesssim \frac{C^{N-2}}{N}, \label{E:THIRDLINEBOUND}
\end{align}
where we note that the last estimate follows from $N< C^{N-2-\lfloor \frac N2\rfloor}$, $N\ge 5$ and $C$ sufficiently large, but independent of $N$. To bound the quadratic nonlinearities in the 4-th line of~\eqref{E:GNDEF} we note the bound
\begin{align}
\lv \sum_{\ell+m=N \atop \ell<N}(-1)^m(\omega^2)_\ell  \rv & \le 1+\sum_{\ell=1}^{N-1} \lv (\omega^2)_\ell \rv 
\le 1+ \sum_{\ell=1}^{N-1} \sum_{k=0}^\ell |\omega_k||\omega_{\ell-k}| \notag \\
& \le 1+ \sum_{\ell=1}^{N-1} \left( |\omega_0| \frac{C^{\ell-1}}{\ell^2} + \sum_{k=1}^\ell \frac{C^{\ell-2}}{k^2(\ell-k)^2}\right) \notag \\
& \lesssim  \frac{C^{N-2}}{N}, \notag 
\end{align} 
where we have used the same argument as in the proof of~\eqref{E:THIRDLINEBOUND} to estimate 
$\sum_{\ell=1}^{N-1}  \frac{C^{\ell-1}}{\ell^2}$ and Lemma~\ref{L:COMB} to estimate $ \sum_{k=1}^\ell \frac{1}{k^2(\ell-k)^2}$. We estimate the remaining terms in the 4-th line of~\eqref{E:GNDEF} analogously and conclude
\begin{align}
\lv \sum_{\ell+m=N-2}(-1)^m(\omega^2)_\ell+2\sum_{\ell+m=N-1}(-1)^m(\omega^2)_\ell
 + \sum_{\ell+m=N \atop \ell<N}(-1)^m(\omega^2)_\ell + \sum_{k+n=N\atop 0<k<N}\omega_k\omega_n  \rv
\lesssim  \frac{C^{N-2}}{N}. \notag 
\end{align}
The cubic expressions in the last two lines of~\eqref{E:GNDEF} are estimates by the same token as the bound~\eqref{E:BOUNDTHREE} and we have
\begin{align}
& \lv 3\S^2 \Big(\sum_{k+\ell+m=N-2}(-1)^m\omega_k(\omega^2)_\ell
+ 2 \sum_{k+\ell+m=N-1}(-1)^m\omega_k(\omega^2)_\ell \rv \notag \\
& +\lv \sum_{k+\ell+m=N \atop k\neq N, \ell\neq N}(-1)^m\omega_k(\omega^2)_\ell
+ \sum_{k+\ell+m=N}(-1)^m\omega_0\sum_{a+b=N\atop 0<a<N }\omega_a\omega_b\Big)\rv \lesssim \frac{C^{N-3}}{N}.
\notag 
\end{align}
From the definition~\eqref{E:GNDEF} of $\mathcal G_N$ and the above bounds, we conclude~\eqref{E:GNBOUND}. 
\end{proof}


\begin{lemma}\label{L:INDUCTIVESTEP}
Let $\S>\frac32$.
Under the inductive assumptions~\eqref{E:ASS1}--\eqref{E:ASS2} it follows that for a sufficiently large $C$
\begin{align}
\lv \rho_N\rv & \le \frac{C^{N-1}}{N^2} \label{E:ISTEPRHO}\\
\lv \omega_N\rv & \le \frac{C^{N-1}}{N^2}.\label{E:ISTEPOMEGA}
\end{align}
Moreover, for any closed interval $K\subset (0,\frac23)$ we can choose the same constant $C$ for all $\omega_0\in K$. 
\end{lemma}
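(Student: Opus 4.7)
The plan is to chain together the linear algebra relations from Lemma~\ref{L:MATRIXA} with the nonlinear a priori bounds from Lemma~\ref{L:KEYINDUCTION}. Concretely, since $\S>\frac32$ forces $\omega_0=\frac{1}{\S}\in(0,\frac23)$, Lemma~\ref{L:LP} guarantees invertibility of $\mathcal A_N^{LP}$ for every $N\ge 2$, and the explicit formulas~\eqref{E:RHONFN}--\eqref{E:OMEGANFNGN} combined with the elementary bounds~\eqref{E:RHONBOUNDONE}--\eqref{E:OMEGANBOUNDONE} reduce the problem to estimating $\mathcal F_N$ and $\mathcal G_N$ in terms of the previous Taylor coefficients.

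Under the inductive hypotheses~\eqref{E:ASS1}--\eqref{E:ASS2}, Lemma~\ref{L:KEYINDUCTION} supplies the control
\[
|\mathcal F_N|+|\mathcal G_N|\le \frac{2\beta}{\omega_0^2}\,\frac{C^{N-2}}{N}.
\]
Inserting this into~\eqref{E:RHONBOUNDONE}--\eqref{E:OMEGANBOUNDONE} and using $\frac{1}{N}\le 1$ to absorb the $\mathcal F_N$ contribution to the $\omega_N$ bound, one obtains
\[
|\rho_N|+|\omega_N| \le \frac{M}{\omega_0^2(\tfrac23-\omega_0)}\,\frac{C^{N-2}}{N^{2}}
\]
for some universal constant $M$ independent of $N$, $C$, and $\omega_0$. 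Choosing
\[
C\ \ge\ \frac{M}{\omega_0^2(\tfrac23-\omega_0)}
\]
immediately upgrades the right-hand side to $\frac{C^{N-1}}{N^2}$, which is precisely~\eqref{E:ISTEPRHO}--\eqref{E:ISTEPOMEGA}. The only subtlety is that the bounds coming from Lemma~\ref{L:KEYINDUCTION} required $C$ to be at least some $C_\ast$ (to control borderline sums such as the one handled in~\eqref{E:THIRDLINEBOUND}), so the final $C$ is taken to be the maximum of $C_\ast$ and the constant displayed above.

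The uniformity claim follows because on any closed interval $K\subset(0,\frac23)$ the factor $\omega_0^2(\tfrac23-\omega_0)$ is bounded below by a strictly positive constant depending only on $K$; hence a single $C=C(K)$ works for all $\omega_0\in K$. Finally one must verify the base cases needed to launch the induction: the explicit values of $(\rho_0,\omega_0)$ and $(\rho_1,\omega_1)$ from the LP condition~\eqref{E:LPCONDITION}, together with the $N=2$ output of Lemma~\ref{L:MATRIXA}, are all bounded by constants depending only on $K$, so enlarging $C$ further if necessary secures the inequalities $|\rho_k|,|\omega_k|\le C^{k-1}/k^2$ for the small indices $k$ that the inductive step does not itself cover. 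The main (mild) obstacle is bookkeeping the various universal constants $\alpha,\beta,M,C_\ast$ so that the final $C$ can be chosen uniformly on $K$, but no new ideas beyond the two preceding lemmas are needed.
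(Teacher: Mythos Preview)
Your proposal is correct and follows essentially the same approach as the paper's own proof: combine the a priori bounds~\eqref{E:RHONBOUNDONE}--\eqref{E:OMEGANBOUNDONE} with the estimates~\eqref{E:FNBOUND}--\eqref{E:GNBOUND} from Lemma~\ref{L:KEYINDUCTION}, and then choose $C$ large enough (depending only on $\omega_0^2(\tfrac23-\omega_0)$) to close the induction. Your discussion of the base cases and of the threshold $C_\ast$ is slightly more explicit than the paper's, but the argument is the same.
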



\begin{proof}
When $N=1$ the claim is obvious from~\eqref{E:LPCONDITION}. 
Bounds~\eqref{E:RHONBOUNDONE} and~\eqref{E:FNBOUND} together give
\begin{align}
\lv \rho_N\rv \le \frac{\alpha \beta}{\omega_0^2\left(\frac23-\omega_0\right)} \frac{C^{N-2}}{N^2} = \frac{\alpha\beta}{C\omega_0^2(\frac23-\omega_0)}  \frac{C^{N-1}}{N^2}. \notag 
\end{align}
Similarly, bounds~\eqref{E:OMEGANBOUNDONE} and~\eqref{E:GNBOUND} give
\begin{align}
\lv \omega_N\rv & \le  \frac{\alpha}{N\left(\frac23-\omega_0\right)} 
\left(\lv \mathcal G_N\rv + \frac1N  \lv \mathcal F_N\rv \right)
 \le \frac{2\alpha\beta}{C\omega_0^2\left(\frac23-\omega_0\right)}   \frac{C^{N-1}}{N^2}. \notag 
\end{align}
It is now clear that we can choose $C=C(\alpha,\beta,\omega_0)$ large enough so that~\eqref{E:ISTEPRHO}--\eqref{E:ISTEPOMEGA} is true. The uniformity statement with respect to a closed subinterval $K\subset (0,\frac23)$ is obvious from the previous estimate.
 \end{proof}


\begin{theorem}\label{T:ANALYTICITY}
Let $K\subset (0,\frac23)$ be a closed interval. There exists an $1>r=r_K>0$ such that for any $\omega_0=\frac1\S\in K$ the formal power series 
\begin{align}
{\bf \rho}(z) : = \sum_{N=0}^\infty  \rho_N  \dz^N , \ \ 
{\bf \omega}(z) : = \sum_{N=0}^\infty  \rho_N  \dz^N \label{E:INFINITESUMS}
\end{align}
converge for all $z$ such that $\lv \delta z\rv=\lv z-1\rv<r$. 
In particular, functions $\rho(z)$ and $\omega(z)$ are real analytic inside the ball $|z-1|<r$. We can differentiate the infinite sums term by term and $(\rho(z),\omega(z))$ is an LP-type solution of~\eqref{E:RHOEQN}--\eqref{E:OMEGAEQN} for $\lv z-1\rv<r$. Moreover, the density $\rho(\cdot;\S)$ is strictly positive  for $\lv z-1\rv<r$. 
\end{theorem}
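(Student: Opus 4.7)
My plan is to combine the inductive bound from Lemma~\ref{L:INDUCTIVESTEP} with a standard power-series/analyticity argument, then check that the sum actually solves the original ODE system and is positive.

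First I would iterate Lemma~\ref{L:INDUCTIVESTEP}. The base cases $N=0,1$ are given explicitly by $(\rho_0,\omega_0)=(\tfrac1{\S},\tfrac1{\S})$ and the LP prescription~\eqref{E:LPCONDITION}. Lemma~\ref{L:INDUCTIVESTEP} then yields a constant $C=C_K$, depending only on the closed interval $K\subset(0,\tfrac23)$, such that
\[
|\rho_N|\le \frac{C^{N-1}}{N^2},\qquad |\omega_N|\le \frac{C^{N-1}}{N^2}\qquad(N\ge1)
\]
uniformly in $\omega_0\in K$. This implies that the series in~\eqref{E:INFINITESUMS} converges absolutely for $|\delta z|<1/C$; set $r=r_K:=1/(2C)$. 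Because the $\rho_N,\omega_N$ decay like $C^{N-1}/N^2$, the termwise differentiated series $\sum N\rho_N(\delta z)^{N-1}$, $\sum N\omega_N(\delta z)^{N-1}$ and all the products $\rho\omega$, $\rho^2$, $\omega^2$, etc., converge absolutely and uniformly on any compact subdisk of $\{|\delta z|<r\}$, so the sum functions $\rho(z),\omega(z)$ are real-analytic there and the usual termwise-differentiation rules apply.

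Next I would verify that $(\rho,\omega)$ solve~\eqref{E:RHOEQN}--\eqref{E:OMEGAEQN}. By construction (Lemma~\ref{L:TAYLOREXP0}) the coefficients $(\rho_N,\omega_N)$ were defined precisely so that the formal identities~\eqref{E:TAYLORRHOFORMULA}--\eqref{E:TAYLOROMEGAFORMULA} hold for every $N\ge0$. Since the relevant series all converge absolutely on $|\delta z|<r$ and can be rearranged and multiplied freely, this means that the analytic functions $\rho(z),\omega(z)$ satisfy the \emph{multiplied} equations~\eqref{E:RHOEQN1}--\eqref{E:OMEGAEQN1} identically on this disk. At $z=1$ one has $1-\S^2 z^2\omega^2=0$, but the derivative of this factor at $z=1$ equals $-2\S^2(\omega_0^2+\omega_0\omega_1)=-2(1-\tfrac1{\S})\neq0$ for $\S>\tfrac32$, so $1-\S^2 z^2\omega^2$ has a simple zero at $z=1$ and is nonzero in a punctured neighbourhood. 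On this punctured neighbourhood we may divide by $1-\S^2 z^2\omega^2$ to recover the original equations~\eqref{E:RHOEQN}--\eqref{E:OMEGAEQN}. Thus $(\rho(z),\omega(z))$ is a genuine solution of the ODE system on $\{0<|\delta z|<r\}$ (and extends analytically across $z=1$), and the LP conditions~\eqref{E:ZEROCONST}, \eqref{E:LPCONDITION} are built into the first two Taylor coefficients.

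Finally, positivity of $\rho$ is immediate: $\rho(1)=\tfrac1{\S}>0$ and $\rho$ is continuous, so shrinking $r$ if necessary we obtain $\rho(z)>0$ on $|z-1|<r$; the bound on $r$ only tightens by a uniform (in $\omega_0\in K$) amount since $|\rho(z)-\tfrac1{\S}|\le\sum_{N\ge1}|\rho_N|r^N\le \sum_{N\ge1}C^{-1}\cdot 2^{-N}/N^2$ can be made smaller than $\tfrac12\inf_K\omega_0$ by choosing $r$ small. The only place any subtlety enters is the division step: one must confirm the simple zero of $1-\S^2 z^2\omega^2$ at $z=1$ (using the LP values of $\omega_1$) so that the quotient defining the original ODE system is analytic and the equations hold in the classical pointwise sense off the sonic point. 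Apart from that, the entire theorem reduces to the inductive coefficient estimate already proved in Lemma~\ref{L:INDUCTIVESTEP}.
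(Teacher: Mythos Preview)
Your proposal is correct and follows essentially the same route as the paper: invoke the inductive coefficient bound from Lemma~\ref{L:INDUCTIVESTEP} to get convergence with a radius $r\sim 1/C$ uniform over $K$, observe that the coefficients were constructed so that the multiplied equations~\eqref{E:RHOEQN1}--\eqref{E:OMEGAEQN1} hold as identities of analytic functions, and check that $1-\S^2 z^2\omega^2$ has a simple zero at $z=1$ so one may divide through on the punctured disk. (A minor slip: the derivative of $1-\S^2 z^2\omega^2$ at $z=1$ is $-2\S(1-\tfrac1{\S})$, not $-2(1-\tfrac1{\S})$, but this does not affect the nonvanishing conclusion.)
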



\begin{proof}
By Lemma~\ref{L:INDUCTIVESTEP} there exists a $C=C_K>0$ such that 
\begin{align}
\sum_{N=1}^\infty  \lv \rho_N\rv  |\delta z|^N + \sum_{N=1}^\infty  \lv \omega_N\rv  |\delta z|^N \le  2\sum_{N=1}^\infty  \frac{\lv C\delta z\rv^N}{CN^2} <\infty, \notag 
\end{align}
when $|\delta z|<\frac1C=:r$. The claim follows by the comparison test. The real analyticity and differentiability statements are clear. Since
\begin{align}
1-y_\ast^2z^2\omega^2 & = 1-y_\ast^2(1+\delta z)^2\omega^2 \notag \\
& = - 2y_\ast(1-\omega_0) \delta z - y_\ast^2 \sum_{N=2}^\infty \left((\omega^2)_{N-2}+2(\omega^2)_{N-1}+(\omega^2)_N\right)\dz^N \notag \\
& \neq0, \ \ 0<|\delta z| \ll1, \notag 
\end{align}
it follows that for $r>0$ sufficiently small, the function $1-y_\ast^2z^2\omega^2\neq 0$ for all $|z-1|<r$ and $z\neq1$.
Functions $\rho(z)$ and $\omega(z)$ are indeed the solutions, as can be seen by plugging the infinite series~\eqref{E:INFINITESUMS} into the left-hand sides of~\eqref{E:OMEGAEQN1}--\eqref{E:RHOEQN1}; all the functions appearing on the left-hand side are analytic for $|z-1|<r$. 
\end{proof}


We note that 
$\rho_i,\omega_i$ are smooth with respect to $\omega_0$ (or equivalently $y_\ast$) for $i=0,1$ for any $\omega_\ast>0$. This follows from the explicit formula for the Taylor coefficients around the sonic point for LP-type solutions. We next note that for any $N\ge 2$ we can express $\rho_N, \omega_N$ as polynomial function of $\rho_0,\dots\rho_{N-1}, \omega_0,\dots\omega_{N-1}$ for any $0<\omega_0<\frac23$ and therefore it is clear that $\rho_N,\omega_N$ are smooth functions of $\omega_0$  for all $N\in\mathbb N$ for $\omega_0\in(0,\frac23)$.


\begin{lemma}\label{L:ANALYTIC}
For any $\omega_0\in(0,\frac23)$ there exists a constant $C =C(\omega_0)>0$ such that for all $N\in\mathbb N\setminus \{0\}$:
\begin{align}
\lv \pa_{\omega_0}\rho_N\rv & \le \frac{C^{N}}{N^2} \label{E:OMEGAREG1}\\
\lv \pa_{\omega_0}\omega_N\rv & \le \frac{C^{N}}{N^2}  .\label{E:OMEGAREG2}
\end{align}
In particular, there exists an $r>0$ such that the formal power series
\begin{align}
 \sum_{N=0}^\infty \pa_{\omega_0}\rho_N \dz^N, \ \ 
 \sum_{N=0}^\infty \pa_{\omega_0}\omega_N \dz^N,  \notag 
\end{align}
converge for all $z$ satisfying $|z-1|<r$. Moreover, the function $(0,\frac23)\ni \omega_0\to (\rho(z;\omega_0),\omega(z,\omega_0)$
is $C^1$ and the derivatives $\pa_{\omega_0}\rho$ and  $\pa_{\omega_0}\omega$ are given by the infinite series above.
\end{lemma}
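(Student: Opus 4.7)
The plan is to run an induction on $N$ parallel to the one in Lemmas~\ref{L:KEYINDUCTION}--\ref{L:INDUCTIVESTEP}, applied now to the $\omega_0$-derivatives. First I would differentiate the closed-form relations~\eqref{E:RHONFN}--\eqref{E:OMEGANFNGN} with respect to $\omega_0$, obtaining
\begin{align}
\pa_{\omega_0}\rho_N & = -\frac{N/\omega_0^{2}}{2\bigl(N(1-\tfrac1{\omega_0})+1\bigr)^{2}}\mathcal F_N + \frac{1}{2\bigl(N(1-\tfrac1{\omega_0})+1\bigr)}\pa_{\omega_0}\mathcal F_N,\notag
\end{align}
and an analogous formula for $\pa_{\omega_0}\omega_N$. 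On any closed $K\subset(0,\tfrac23)$ the prefactors are uniformly bounded and the effective scaling in $N$ is still $\tfrac1N$, exactly as in Lemma~\ref{L:INDUCTIVESTEP}. The base case $N=1$ is handled by differentiating~\eqref{E:LPCONDITION} directly.

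For the inductive step, assume~\eqref{E:OMEGAREG1}--\eqref{E:OMEGAREG2} hold for indices $1,\dots,N-1$ while retaining the bounds $\lv\rho_k\rv,\lv\omega_k\rv\le C^{k-1}/k^{2}$ from Lemma~\ref{L:INDUCTIVESTEP}. Apply the product rule to the multilinear expressions~\eqref{E:FNDEF} and~\eqref{E:GNDEF}: each term becomes a sum of subterms in which exactly one of the factors $\rho_j$ or $\omega_j$ is replaced by $\pa_{\omega_0}\rho_j$ or $\pa_{\omega_0}\omega_j$. By the inductive hypothesis the derivative-bearing factor costs an extra power of $C$ compared with its undifferentiated counterpart. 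The combinatorial bounds~\eqref{E:COMB1}--\eqref{E:COMB3} of Lemma~\ref{L:COMB} are robust to this replacement, so repeating verbatim the estimates~\eqref{E:BOUNDONE}--\eqref{E:BOUNDFOUR} and the accompanying bounds for the cubic and quadratic sums in $\mathcal G_N$ yields
\begin{align}
\lv \pa_{\omega_0}\mathcal F_N\rv + \lv \pa_{\omega_0}\mathcal G_N\rv \le \frac{\beta'}{\omega_0^{2}}\frac{C^{N-1}}{N},\notag
\end{align}
for a universal constant $\beta'>0$. The residual term $(-N/\omega_0^{2})\mathcal F_N/\bigl(2(N(1-\tfrac1{\omega_0})+1)^{2}\bigr)$ is harmless because Lemma~\ref{L:KEYINDUCTION} already gives $\lv\mathcal F_N\rv\lesssim C^{N-2}/N$, and the extra factor of $N$ cancels against the square of the prefactor. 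Substituting into the differentiated relations and choosing $C=C_K$ large enough (enlarging, if necessary, the constant provided by Lemma~\ref{L:INDUCTIVESTEP}) produces $\lv\pa_{\omega_0}\rho_N\rv,\lv\pa_{\omega_0}\omega_N\rv\le C^{N}/N^{2}$, closing the induction; the choice of $C$ is uniform on $K$ just as before.

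With~\eqref{E:OMEGAREG1}--\eqref{E:OMEGAREG2} in hand, the Weierstrass $M$-test gives uniform convergence of $\sum \pa_{\omega_0}\rho_N\,\dz^N$ and $\sum \pa_{\omega_0}\omega_N\,\dz^N$ on $\{|z-1|<r\}$ for any $r<1/C$, and simultaneously uniform convergence (on $K\times\{|z-1|\le r'\}$ for $r'<r$) of the undifferentiated series from Theorem~\ref{T:ANALYTICITY}. The classical theorem on differentiation of series with uniformly convergent derivatives therefore identifies $\pa_{\omega_0}\rho(z;\omega_0)$ and $\pa_{\omega_0}\omega(z;\omega_0)$ with the termwise-differentiated sums, and each of these is continuous in $\omega_0$ (since each partial sum is smooth in $\omega_0$ and the convergence is uniform). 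This gives the claimed $C^{1}$ regularity of $\omega_0\mapsto(\rho(z;\omega_0),\omega(z;\omega_0))$.

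The main obstacle is purely organisational: $\mathcal F_N$ and $\mathcal G_N$ contain many summands (including the cubic ones coming from $\omega\rho(\rho-\omega)$ and $\omega^{2}(\rho-\omega)$), and the product rule roughly triples the number of terms. However no new analytic input is required: each derivative-bearing term is estimated by the \emph{same} combinatorial lemma used in Lemma~\ref{L:KEYINDUCTION}, the only cost being a single extra factor of $C$, which is absorbed by the upgrade from $C^{N-1}/N^{2}$ to $C^{N}/N^{2}$ in the target bound. The derivative of the prefactors produces no difficulty because, as noted above, $\mathcal F_N$ and $\mathcal G_N$ themselves already satisfy the sharper bound $C^{N-2}/N$.
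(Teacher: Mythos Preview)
Your proposal is correct and follows essentially the same route as the paper: differentiate the recursions~\eqref{E:RHONFN}--\eqref{E:OMEGANFNGN}, observe that the product rule applied to $\mathcal F_N,\mathcal G_N$ preserves the combinatorial structure at the cost of one extra power of $C$, and close the induction exactly as in Lemmas~\ref{L:KEYINDUCTION}--\ref{L:INDUCTIVESTEP}. Your write-up is in fact more explicit than the paper's (which simply asserts that ``the same inductive proof'' goes through); the only minor omission is that $\mathcal F_N,\mathcal G_N$ also carry the explicit prefactor $y_\ast^2=\omega_0^{-2}$, whose $\omega_0$-derivative contributes an additional harmless term bounded by $\omega_0^{-3}$ times the undifferentiated expression.
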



\begin{proof}
When $N=1$ we have by the Larson-Penston sonic condition $\pa_{\omega_0}\rho_1 = -1$, $\pa_{\omega_0}\omega_1=-2$ and the claim is obvious. 
We now note that by~\eqref{E:RHONFN}--\eqref{E:OMEGANFNGN} 
\begin{align}
\pa_{\omega_0}\rho_N  = & - \frac{N}{\omega_0^2\left(N(1-\frac1{\omega_0})+1\right)} \rho_N 
+ \frac1{2\left(N(1-\frac1{\omega_0})+1\right)} \pa_{\omega_0}\mathcal F_N \label{E:PARTIALOMEGARHO}\\
\pa_{\omega_0}\omega_N & = - \frac1{2N\omega_0^2(1-\frac1{\omega_0})^2} \mathcal G_N 
- \frac{2N\left(1-\frac1{\omega_0}\right)+1}{2N\omega_0^2\left(1-\frac1{\omega_0}\right)^2\left (N(1-\frac1{\omega_0})+1\right)^2}\mathcal F_N \notag \\
& \ \  + \frac1{2N(1-\frac1{\omega_0})} \pa_{\omega_0}\mathcal G_N
+ \frac1{2N(1-\frac1{\omega_0})\left(N(1-\frac1{\omega_0})+1\right)} \pa_{\omega_0}\mathcal F_N. \label{E:PARTIALOMEGAOMEGA}
\end{align} 
From~\eqref{E:PARTIALOMEGARHO} and Lemma~\ref{L:KEYINDUCTION} we immediately have the bound
\begin{align}
\lv \pa_{\omega_0}\rho_N \rv & \lesssim \frac 1 {\omega_0^2(\frac23-\omega_0)} |\rho_N| + \frac{1}{N\left(\frac23-\omega_0\right)}\lv\pa_{\omega_0}\mathcal F_N\rv \notag \\
& \le \frac{C^{N-1}} {N^2\omega_0^2(\frac23-\omega_0)} + \frac{1}{N\left(\frac23-\omega_0\right)}\lv\pa_{\omega_0}\mathcal F_N\rv \notag 
\end{align}
Similarly, from~\eqref{E:PARTIALOMEGAOMEGA} and Lemma~\ref{L:KEYINDUCTION} we obtain 
\begin{align}
\lv \pa_{\omega_0}\omega_N \rv & \lesssim \frac1N \lv \mathcal G_N\rv + \frac1{N^2(\frac23-\omega_0)}\lv \mathcal F_N\rv
+ \frac1{N^2(\frac23-\omega_0)}\lv \pa_{\omega_0}\mathcal F_N\rv +  \frac1{N(\frac23-\omega_0)}\lv \pa_{\omega_0}\mathcal G_N\rv \notag \\
& \le \frac{\beta}{\omega_0^2} \frac{C^{N-2}}{N^2} + \frac1{(\frac23-\omega_0)}\frac{\beta}{\omega_0^2} \frac{C^{N-2}}{N^3}
+ \frac1{N^2(\frac23-\omega_0)}\lv \pa_{\omega_0}\mathcal F_N\rv +  \frac1{N(\frac23-\omega_0)}\lv \pa_{\omega_0}\mathcal G_N\rv \notag 
\end{align}

We now recall that $\mathcal F_N$ and $\mathcal G_N$ are cubic polynomials in $2N$ variables $\rho_0,\omega_0,\dots,\rho_{N-1},\omega_{N-1}$. When differentiating with respect to $\omega_0$ at most one term, indexed by $\rho_k$ or $\omega_k$, $0\le k\le N-1$ is differentiated. In particular, the same combinatorial structure in the problem is maintained and the same inductive proof relying on the already established bounds~\eqref{E:ISTEPRHO}--\eqref{E:ISTEPOMEGA} gives~\eqref{E:OMEGAREG1}--\eqref{E:OMEGAREG2}. The remaining conclusions now follow easily.
\end{proof}


\subsection{Existence, uniqueness, and regularity near the origin}

\begin{theorem}\label{T:ANALYTICITYLEFT}
Let $\rho_0>0$ be given. There exists an $0<\tilde r<1$ such that the formal power series 
\begin{align}
\rho_-(z,\rho_0):=\sum_{N=0}^\infty \tr_N z^N, \ \
\omega_-(z;\rho_0):= \sum_{N=0}^\infty \tom_N z^N \notag 
\end{align}
converge for all $z$ such that $0\le  z<\tilde r$. 
In particular, functions $\rho_-(\cdot;\rho_0)$ and $\omega_-(\cdot;\rho_0)$ are real analytic on $[0,\tilde r)$. We can differentiate the infinite sums term by term and the functions $\rho_-(\cdot;\rho_0)$ and $\omega_-(\cdot;\rho_0)$  solve~\eqref{E:RHOEQN}--\eqref{E:OMEGAEQN} with the initial conditions $\omega_-(0;\rho_0)=\frac13$ and $\rho_-(0;\rho_0)=\rho_0$.
\end{theorem}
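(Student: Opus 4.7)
The plan is to mirror the sonic-point argument of Theorem~\ref{T:ANALYTICITY}: substitute the formal power series into the ODE, derive a recursion for the Taylor coefficients $(\tr_N,\tom_N)$, verify that the recursion is well-posed, and close inductive bounds $|\tr_N|,|\tom_N|\le C^N/N^2$ forcing absolute convergence on a disk $\{|z|<\tilde r\}$. To this end I first clear the singular denominators in~\eqref{E:RHOEQN}--\eqref{E:OMEGAEQN} by multiplying by $(1-\S^2z^2\omega^2)$ and $z(1-\S^2z^2\omega^2)$ respectively, obtaining the polynomial identities
\begin{align*}
(1-\S^2z^2\omega^2)\rho' &= -2\S^2 z\,\omega\rho(\rho-\omega),\\
z(1-\S^2z^2\omega^2)\omega' &= (1-3\omega)(1-\S^2z^2\omega^2)+2\S^2z^2\omega^2(\rho-\omega),
\end{align*}
into which the ansatz $\rho=\sum_{N\ge 0}\tr_N z^N$, $\omega=\sum_{N\ge 0}\tom_N z^N$ can be substituted with no further difficulty. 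Matching the constant term of the $\omega$-equation gives $0=1-3\tom_0$, enforcing $\tom_0=\tfrac13$ in agreement with the initial datum; the $\rho$-equation is trivially satisfied at $z^0$, leaving $\tr_0=\rho_0$ free.

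For $N\ge 1$ I match the coefficient of $z^N$. On the LHS of the $\omega$-equation the top-order contribution is $N\tom_N$ coming from $z\omega'$, while $-\S^2 z^3\omega^2\omega'$ involves $\tom$-coefficients only up to index $N-2$. On the RHS the sole appearance of $\tom_N$ is through the linear term $-3\omega$, contributing $-3\tom_N$, since every other term carries a factor of $z^2$ or higher and therefore involves only $\tr_{\le N-2}$ and $\tom_{\le N-2}$. Hence
\[
(N+3)\tom_N=\tilde G_N\bigl[\tr_0,\tom_0,\dots,\tr_{N-2},\tom_{N-2}\bigr]
\]
for an explicit polynomial $\tilde G_N$ analogous to~\eqref{E:GNDEF}. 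Parallel book-keeping in the $\rho$-equation at order $z^N$ isolates $(N+1)\tr_{N+1}$ as the unique top-order contribution and yields
\[
(N+1)\tr_{N+1}=\tilde F_N\bigl[\tr_0,\tom_0,\dots,\tr_{N-1},\tom_{N-1}\bigr].
\]
Since $N+1$ and $N+3$ never vanish, the recursion is in fact scalar at every step (no matrix inversion is needed, in contrast to Lemma~\ref{L:LP}) and uniquely determines every $(\tr_N,\tom_N)$; in particular $\tr_1=\tom_1=0$.

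With the recursion in place, I close the inductive bounds $|\tr_N|,|\tom_N|\le C^N/N^2$ for some $C=C(\rho_0,\S)>0$ large, using the same combinatorial scheme as in Lemmas~\ref{L:KEYINDUCTION} and~\ref{L:INDUCTIVESTEP}. The polynomials $\tilde F_N$ and $\tilde G_N$ are finite sums of shifted quadratic and cubic convolutions of $\{\tr_k,\tom_k\}_{k<N}$, and the estimates~\eqref{E:COMB1}--\eqref{E:COMB3} bound each such convolution by $c\,C^{N-1}/N$ uniformly in the relevant range. Dividing by $N+1$ or $N+3$ produces an extra factor of $1/N$, which closes the induction provided $C$ is taken sufficiently large (depending on $|\rho_0|$, $\S$, and the constants in Lemma~\ref{L:COMB}). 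The comparison test then gives absolute convergence of both series on $\{|z|<\tilde r\}$ with $\tilde r:=\min(1/2,1/C)<1$.

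On this disk the resulting functions $\rho_-(\cdot;\rho_0)$, $\omega_-(\cdot;\rho_0)$ are real analytic and may be differentiated term-by-term. After possibly shrinking $\tilde r$, the factor $1-\S^2z^2\omega^2=1-\tfrac{\S^2}{9}z^2+O(z^3)$ stays bounded away from $0$, and the apparent singularity $1/z$ in~\eqref{E:OMEGAEQN} is resolved by the observation that $1-3\omega=-3\sum_{N\ge 1}\tom_N z^N$ is analytic with a zero of order at least one at $z=0$. Dividing the polynomial identities above by the appropriate analytic factors therefore recovers~\eqref{E:RHOEQN}--\eqref{E:OMEGAEQN}, and the initial conditions $\omega_-(0;\rho_0)=\tfrac13$, $\rho_-(0;\rho_0)=\rho_0$ hold by construction. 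The main obstacle in this argument is the combinatorial book-keeping of the second step: one must verify that $\tr_N$ never appears on the RHS of the $\omega$-equation at order $z^N$ and that $\tom_N$ never appears on the RHS of the $\rho$-equation at order $z^N$, so that the recursion is genuinely scalar rather than coupled; once this is confirmed, the analytic estimates proceed exactly as in the sonic-point proof.
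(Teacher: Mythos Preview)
Your proposal is correct and follows essentially the same approach as the paper. The paper likewise clears denominators, substitutes the power series, and arrives at the scalar recursions $(N+1)\tr_{N+1}=\tilde{\mathcal F}_{N+1}$ and $(N+4)\tom_{N+1}=\tilde{\mathcal G}_{N+1}$ (which after a shift of index match your $(N+1)\tr_{N+1}=\tilde F_N$ and $(N+3)\tom_N=\tilde G_N$), then declares the convergence estimates ``entirely analogous'' to the sonic-point case and omits the details; your write-up in fact spells out slightly more than the paper does, including the observation that no matrix inversion is needed here.
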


\begin{proof}
By analogy to the previous section, we must Taylor-expand the solution at the origin $z=0$ in order to prove a local existence theorem starting from the left. An immediate consistency condition follows from the presence of $\frac{1-3\omega}{z}$ on the right-hand side of~\eqref{E:OMEGAEQN}. Namely, in order to have a well-posed problem with initial data prescribed at $z=0$ we must have $\omega(0)=\frac13$. 

Assume that locally around $z=0$
\begin{align}
\rho & =  \sum_{N=0}^\infty \tr_N z^N, \ \ \omega  = \sum_{N=0}^\infty \tom_N z^N \label{E:RHOOMEGAEXPANSIONZERO} 
\end{align}

Our starting point are the equations
\begin{align}
\omega' (1-\S^2 z^2\omega^2) - \left(1-3\omega\right)(1-\S^2z^2\omega^2)\frac1z -
2\S^2z\omega^2(\rho-\omega) & =0, \label{E:OMEGAEQN1ZERO}\\
\rho' (1-\S^2z^2\omega^2) +2\S^2 z \omega\rho(\rho-\omega) &=0, \label{E:RHOEQN1ZERO}
\end{align}

We plug in~\eqref{E:RHOOMEGAEXPANSIONZERO} into~\eqref{E:RHOEQN1ZERO} 
and obtain
\begin{align}
0 & = \left(\sum_{k=0}^\infty k \tr_k z^{k-1}\right)\left(1 - \S^2\sum_{\ell=0}^\infty(\omega^2)_\ell z^{\ell+2}\right) 
 + 2 \S^2 \sum_{k=0}^\infty \left(\omega\rho(\rho-\omega)\right)_k z^{k+1} \notag \\
& = \sum_{N=0}^\infty (N+1)\tr_{N+1}z^N 
 - \S^2 \sum_{N=0}^\infty  \sum_{k+\ell = N-2}(k+1)\tr_{k+1}(\omega^2)_\ell z^N + 2 \S^2 \sum_{N=0}^\infty \left(\omega\rho(\rho-\omega)\right)_{N-1}  z^N, \notag 
\end{align}
where, by definition $\tr_k=\tom_k=0$ for $k<0$. Equating the coefficients above, we conclude that for any non-negative $N$ we have
\begin{align}
& (N+1)\tr_{N+1} - \S^2 \sum_{k+\ell = N-2}(k+1)\rho_{k+1}(\omega^2)_\ell 
+ 2 \S^2 \left(\omega\rho(\rho-\omega)\right)_{N-1} = 0, \label{E:TAYLORRHO1ZERO}
\end{align}
which is precisely~\eqref{E:TAYLORRHOFORMULA}. Similarly, after plugging in~\eqref{E:RHOOMEGAEXPANSIONZERO} into~\eqref{E:OMEGAEQN1ZERO} we obtain
\begin{align}
0 & = \left(\sum_{k=0}^\infty k \tom_k z^{k-1}\right)\left(1 - \S^2\sum_{\ell=0}^\infty(\omega^2)_\ell z^{\ell+2}\right) 
+ 3 \sum_{k=0}^\infty\tom_{k+1}z^k\left(1 - \S^2\sum_{\ell=0}^\infty(\omega^2)_\ell z^{\ell+2}\right)  \notag \\
& \ \ \ \ - 2 \S^2 \sum_{k=0}^\infty \left(\omega^2(\rho-\omega)\right)_k z^{k+1} \notag \\
& = \sum_{N=0}^\infty (N+1)\tom_{N+1}z^N 
- \S^2 \sum_{N=0}^\infty  \sum_{k+\ell = N-2}(k+1)\tom_{k+1}(\omega^2)_\ell z^N 
+ 3 \sum_{N=0}^\infty  \tom_{N+1}z^N \notag \\
& \ \ \ \ - 3 \S^2 \sum_{N=0}^\infty \sum_{k+\ell = N-2} \tom_{k+1}(\omega^2)_\ell z^N
- 2 \S^2 \sum_{N=0}^\infty \left(\omega^2(\rho-\omega)\right)_{N-1}  z^N. \notag 
\end{align}
Therefore for any $N\ge0$ we have
\begin{align}
(N+4)\tom_{N+1} - \S^2 \sum_{k+\ell = N-2}(k+1)\tom_{k+1}(\omega^2)_\ell 
- 3 \S^2\sum_{k+\ell = N-2} \tom_{k+1}(\omega^2)_\ell
- 2 \S^2 \left(\omega^2(\rho-\omega)\right)_{N-1}  =0 \label{E:TAYLOROMEGA1ZERO}
\end{align}

It is clear from~\eqref{E:TAYLORRHO1ZERO} that $\rho_0=\rho(0)$ is a free parameter. Identities~\eqref{E:TAYLORRHO1ZERO}--\eqref{E:TAYLOROMEGA1ZERO} give the recursive relationships
\begin{align}
\tr_{N+1} & = \frac1{N+1} \tilde{\mathcal F}_{N+1}, \ \ N\ge0 \label{E:RHONATZERO}\\
\tom_{N+1} & = \frac1{N+4} \tilde{\mathcal G}_{N+1}, \ \ N\ge0. \label{E:OMEGANATZERO}
\end{align}
where 
\begin{align}
\tilde{\mathcal F}_{N+1} & : = \S^2 \sum_{k+\ell = N-2}(k+1)\tr_{k+1}(\omega^2)_\ell 
- 2 \S^2 \left(\omega\rho(\rho-\omega)\right)_{N-1}  \label{E:TILDEFNDEF}\\
\tilde{\mathcal G}_{N+1} & : =\S^2 \sum_{k+\ell = N-2}(k+1)\tom_{k+1}(\omega^2)_\ell 
+3 \S^2\sum_{k+\ell = N-2} \tom_{k+1}(\omega^2)_\ell
- 2 \S^2 \left(\omega^2(\rho-\omega)\right)_{N-1} \label{E:TILDEGNDEF}
\end{align}
The rest of the proof is now entirely analogous to the proof of Theorem~\ref{T:ANALYTICITY} and we leave out the details.
\end{proof}

\begin{remark}
Letting $N=0$ in~\eqref{E:RHONATZERO}--\eqref{E:OMEGANATZERO} we immediately see from~\eqref{E:TILDEFNDEF}--\eqref{E:TILDEGNDEF} that $\tilde{\mathcal F}_{1}=\tilde{\mathcal G}_{1}=0$ and therefore $\tr_1=\tom_1=0$.
Letting $N=1$ in~\eqref{E:TAYLORRHO1ZERO}--\eqref{E:TAYLOROMEGA1ZERO} we obtain
\begin{align}
\tr_2 &= - \S^2 \frac13\rho_0(\rho_0-\frac13) = - \frac13 \S^2 \rho_0^2 + \frac19 \S^2 \rho_0  \notag \\
\tom_2 & = \frac 2{45} \S^2(\rho_0-\frac13) = - \frac{2\S^2}{135}+   \frac{2\S^2}{45} \rho_0 \label{E:BCATZEROOMEGA}
\end{align}
By~\eqref{E:BCATZEROOMEGA} we have in the vicinity of $z=0$ 
\[
\pa_{\rho_0} \omega_-(z;\rho_0) =  \frac{2\S^2}{45} z^2 + \sum_{N=3}^\infty \tom_Nz^N >0, \ \ 0<z<\tilde r.
\]
\end{remark}

\section{The outer region $z>1$}\label{S:OUTERREGION}

In this section we show that for any value of $\S\in[2,3]$ there exists a unique LP-type solution in the exterior region. Such a statement is true because the flow ``moves" in a stable direction as $z\to\infty$. This should be contrasted to the more delicate analysis of the flow in the inner region. Our first preparatory lemma lists a number of simple properties in the vicinity of $z=1$, which follow by continuity and careful use of the LP condition~\eqref{E:LPCONDITION}.

\begin{lemma}[Initialisation]\label{L:INITIAL}
Let $y_\ast\in[2,3]$
and let $(\rho(\cdot),\omega(\cdot)):=(\rho(\cdot;y_\ast),\omega(\cdot;y_\ast))$ be the unique local LP-type solution to~\eqref{E:RHOEQN}--\eqref{E:OMEGAEQN} given by Theorem~\ref{T:ANALYTICITY}. Then there exists a $\delta>0$ such that the following bounds hold:
\begin{align}
\rho'(z)&<0, \ \ z\in (1,1+\delta), \label{E:C1}\\
\omega'(z) & >0, \ \ z\in (1, 1+\delta) \label{E:C2}\\
\frac13< \omega(z) &< 1, \ \ z\in (1, 1+\delta) \label{E:C3}\\
-2\frac{\rho(z)}{z}<\rho'(z)&<-\frac{\rho(z)}{ z}, \ \ z\in (1, 1+\delta) \label{E:C3.5}\\
\omega(z) & >\frac1{y_\ast z}, \ \ z\in (1, 1+\delta) \label{E:C4}\\
\frac1{y_\ast z} & > \rho(z), \ \ \ \ z\in (1, 1+\delta)  \label{E:C5}\\
\rho(z) \omega(z) & >(y_\ast z)^{-2}, \ \ z\in (1, 1+\delta) \label{E:C6} 
\end{align}
\end{lemma}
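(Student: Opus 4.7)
The plan is to obtain each of~\eqref{E:C1}--\eqref{E:C6} by analysing the Taylor expansion of the relevant quantity at $z=1$, using Theorem~\ref{T:ANALYTICITY} (which furnishes a convergent real-analytic representation on some neighbourhood $|z-1|<r$) together with the explicit LP formulas~\eqref{E:LPCONDITION} for the first two Taylor coefficients $\rho_0,\rho_1,\rho_2,\omega_0,\omega_1,\omega_2$. For each bound I first test whether it holds with strict inequality already at $z=1$. If so, continuity provides a $\delta>0$ on which the inequality persists. If instead equality holds at $z=1$, I compute the leading nontrivial Taylor coefficient of the difference of the two sides and verify it carries the desired strict sign; the corresponding $\delta$ is then produced by restricting to a neighbourhood on which the remainder is dominated by the leading term.

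The ``strict-at-$z=1$'' cases are~\eqref{E:C1},~\eqref{E:C4},~\eqref{E:C6}, and the lower half of~\eqref{E:C3.5}. For~\eqref{E:C1} we simply note $\rho'(1)=\rho_1=-1/y_\ast<0$. For~\eqref{E:C4} the quantity $\omega(z)-1/(y_\ast z)$ vanishes at $z=1$, but its derivative there equals $\omega_1+1/y_\ast=1-1/y_\ast$, strictly positive throughout $[2,3]$. For~\eqref{E:C6}, the difference $\rho\omega-(y_\ast z)^{-2}$ vanishes at $z=1$ and its derivative there equals $\rho_1\omega_0+\rho_0\omega_1+2/y_\ast^2=\omega_0(1-\omega_0)>0$. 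The lower inequality in~\eqref{E:C3.5}, rewritten as $B(z):=z\rho'(z)+2\rho(z)$, satisfies $B(1)=\rho_1+2\rho_0=\omega_0>0$.

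The remaining cases~\eqref{E:C2}, the lower bound in~\eqref{E:C3}, the upper half of~\eqref{E:C3.5}, and~\eqref{E:C5} need one extra derivative. For~\eqref{E:C5}, $k(z):=1/(y_\ast z)-\rho(z)$ satisfies $k(1)=0$ and $k'(1)=-1/y_\ast-\rho_1=0$, so I look at $k''(1)=2/y_\ast-2\rho_2$; inserting the explicit value of $\rho_2$ from~\eqref{E:LPCONDITION} and simplifying yields
\begin{equation*}
k''(1)=\frac{(y_\ast-1)^2}{y_\ast(2y_\ast-3)},
\end{equation*}
which is strictly positive on $[2,3]$ since $2y_\ast-3\geq 1$. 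The upper half of~\eqref{E:C3.5}, written as $A(z):=z\rho'(z)+\rho(z)$, satisfies $A(1)=0$ and $A'(1)=2\rho_1+2\rho_2$; the same algebra shows $A'(1)=-(y_\ast-1)^2/[y_\ast(2y_\ast-3)]<0$, hence $A(z)<0$ for $z$ slightly larger than $1$. For~\eqref{E:C2} the only delicate subcase is $y_\ast=2$, where $\omega_1=0$; there the formula~\eqref{E:LPCONDITION} yields $\omega_2=\tfrac14>0$, so that $\omega'(z)=2\omega_2(z-1)+O((z-1)^2)>0$ just to the right of $1$, while for $y_\ast\in(2,3]$ strict positivity of $\omega_1$ already suffices. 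The lower bound in~\eqref{E:C3} is trivial except at $y_\ast=3$, where $\omega(1)=1/3$; there $\omega_1=1/3>0$ forces $\omega(z)>1/3$ for $z$ slightly to the right of $1$. The upper bound $\omega<1$ follows from $\omega(1)=1/y_\ast\le 1/2$ by continuity.

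The main (and essentially only) technical point is the recurring algebraic simplification, in~\eqref{E:C5} and the upper half of~\eqref{E:C3.5}, of the expressions $2/y_\ast-2\rho_2$ and $2\rho_1+2\rho_2$ with $\rho_2$ taken from~\eqref{E:LPCONDITION}; in both cases the numerator collapses to $\pm(y_\ast-1)^2$, whose sign combined with the positive denominator $y_\ast(2y_\ast-3)$ yields the required strict inequality on the whole interval $[2,3]$. Everything else amounts to evaluating a first or second Taylor coefficient of a clearly defined auxiliary function at $z=1$ and reading off the sign, so the lemma follows by taking $\delta$ to be the minimum of the radii produced by each of the seven individual arguments.
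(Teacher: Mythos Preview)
Your proof is correct and follows the same Taylor-expansion-at-$z=1$ strategy as the paper, including the identical key simplification $2/y_\ast-2\rho_2=(y_\ast-1)^2/[y_\ast(2y_\ast-3)]$ for~\eqref{E:C5} and the upper half of~\eqref{E:C3.5}; you are also more careful than the paper in explicitly treating the borderline cases $y_\ast=2$ for~\eqref{E:C2} and $y_\ast=3$ for the lower bound in~\eqref{E:C3}, and in verifying the lower half of~\eqref{E:C3.5}. The one methodological difference worth noting is~\eqref{E:C6}: you compute the first Taylor coefficient of $\rho\omega-(y_\ast z)^{-2}$ directly, whereas the paper derives the identity $(\rho\omega z^2)'=(1-\omega)\rho z$ from~\eqref{E:RHOEQN}--\eqref{E:OMEGAEQN} and reads off monotonicity from $\omega<1$; both arguments are valid here, but the ODE identity is reused later in the proof of global forward existence.
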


\begin{proof}
Claims~\eqref{E:C1}--\eqref{E:C3} are clear and follow by a continuity argument from~\eqref{E:LPCONDITION}. To prove claim~\eqref{E:C3.5} 
we first note that due to $\rho'(1)=-\frac1{y_\ast}$ we have $\rho'(z)+\frac{\rho(z)}{z}=0$ at $z=1$. 
Notice that for any solutions of LP-type by~\eqref{E:LPCONDITION}
\be\label{E:S2NEGATIVE}
2\rho_2\S-2 = - \frac{y_\ast^2-2y_\ast+1}{(2y_\ast-3)} <0,
\ee
where we recall $\rho_2=\frac12\rho''(1)$ by~\eqref{E:RHOOMEGAEXPANSION}.
Since $\frac{d}{dz}\left(\rho'(z)+\frac{\rho(z)}{z}\right)\Big|_{z=1}= \rho''(z)-\frac{\rho(z)}{z^2}+\frac{\rho'(z)}{z}\Big|_{z=1}
=2\rho_2 - 2 \frac1{y_\ast}<0$ by~\eqref{E:S2NEGATIVE}, claim follows by a continuity argument.
Claim~\eqref{E:C4} follows since $\omega(1)=\frac1{y_\ast}$ and $\omega$ is by~\eqref{E:C2} locally strictly increasing and
$\frac1{y_\ast z}$ is clearly strictly decreasing. To prove~\eqref{E:C5}, consider
\[
F(z): = \frac1{y_\ast z} - \rho(z).
\]
Note that $F(1)=F'(1)=0$ and it is therefore necessary to evaluate the second derivative of $F$. A direct calculation   shows that 
$F''(1) = \frac2{y_\ast}-2\rho_2$ which is strictly positive by ~\eqref{E:S2NEGATIVE}. Thus $F$ is strictly increasing on $(1, 1+\delta)$ for a sufficiently small $\delta$.

Finally,  from~\eqref{E:RHOEQN}--\eqref{E:OMEGAEQN} we obtain the equation
\begin{align}\label{E:USEFUL}
\left(\rho \omega z^2\right)' = (1-\omega) \rho  z.
\end{align}
By~\eqref{E:C3} we conclude that $\rho \omega z^2$ is strictly increasing on the interval $(1-\delta, 1+\delta)$ and therefore, since 
$\rho\omega=y_\ast^{-2}$ at $z=1$, claim~\eqref{E:C6} follows.
\end{proof}

 
The next lemma shows the crucial dynamic trapping property. 
\begin{lemma}[Invariant set]\label{L:INVARIANT}
Let $y_\ast\in[2,3]$ and
let $(\rho(\cdot;y_\ast),\omega(\cdot;y_\ast))$ be the unique local LP-type solution to~\eqref{E:RHOEQN}--\eqref{E:OMEGAEQN} given by Theorem~\ref{T:ANALYTICITY}. Let $I = (1, T)$ be the maximal interval of existence to the right of $z=1$ on which the properties
\begin{align}
\frac1{y_\ast z} & > \rho(z),   \label{E:C5NEW}\\
\rho(z)\omega(z) & >(y_\ast z)^{-2},  \label{E:C6NEW} 
\end{align}
hold. Note that $T\ge 1+\delta>1$ by Lemma~\ref{L:INITIAL}. Then the following bounds hold:
\begin{align}
\frac13&< \omega(z) < 1, \ \ z\in (1, T) \label{E:C3NEW}\\
-2\frac{\rho(z)}{ z}&<\rho'(z)<-\frac{\rho(z)}{ z}, \ \ z\in (1, T) \label{E:C3.5NEW}
\end{align}
\end{lemma}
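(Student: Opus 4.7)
The plan is to treat the two bounds on $\rho'$ as pointwise algebraic consequences of the defining hypotheses~\eqref{E:C5NEW}--\eqref{E:C6NEW}, valid on all of $I$, and to obtain the two bounds on $\omega$ by a continuity argument on a maximal sub-interval of $I$. Throughout I will exploit two elementary consequences of the hypotheses: combining $\rho<\frac{1}{y_\ast z}$ with $\rho\omega>\frac{1}{(y_\ast z)^2}$ gives $\omega>\frac{1}{y_\ast z}$, whence $1-y_\ast^2 z^2\omega^2<0$ and $\rho-\omega<0$ strictly on all of $(1,T)$.

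For the bounds on $\rho'$, I clear the denominator in~\eqref{E:RHOEQN} to obtain the identities
\begin{align*}
\rho'+\frac{\rho}{z} &= \frac{\rho\bigl[1+y_\ast^2 z^2\omega^2-2y_\ast^2 z^2\omega\rho\bigr]}{z(1-y_\ast^2 z^2\omega^2)},\\
\rho'+\frac{2\rho}{z} &= \frac{2\rho\bigl[1-y_\ast^2 z^2\omega\rho\bigr]}{z(1-y_\ast^2 z^2\omega^2)}.
\end{align*}
For the second identity, both the numerator and the denominator are strictly negative by~\eqref{E:C6NEW} and the derived inequality, so $\rho'>-\frac{2\rho}{z}$ immediately. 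For the first, the key manipulation is to use $y_\ast z\rho<1$ to bound $2y_\ast^2 z^2\omega\rho<2 y_\ast z\omega$, so the bracketed numerator exceeds $(y_\ast z\omega-1)^2>0$, while the denominator is negative; this yields $\rho'<-\frac{\rho}{z}$ on all of $I$. Neither step requires a continuity argument.

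For the bounds on $\omega$, define $T':=\sup\{z\in(1,T):\ \tfrac{1}{3}<\omega(s)<1 \text{ for all } s\in(1,z]\}$. Lemma~\ref{L:INITIAL} ensures $T'\ge 1+\delta$. I argue by contradiction, supposing $T'<T$ and examining which bound breaks at $z=T'$. If $\omega(T')=1$, then approaching from below forces $\omega'(T')\ge 0$; substituting $\omega=1$ into~\eqref{E:OMEGAEQN} and using $y_\ast T'>1$ together with hypothesis~\eqref{E:C6NEW} in the form $y_\ast^2 T'^2\rho(T')>1$ gives $\omega'(T')<0$, a contradiction. If $\omega(T')=\frac{1}{3}$, then $\omega'(T')\le 0$; but the derived inequality $\omega>\frac{1}{y_\ast z}$ forces $y_\ast T'>3$, so that $1-\frac{y_\ast^2 T'^2}{9}<0$, and together with $\rho(T')-\frac{1}{3}<0$ the right-hand side of~\eqref{E:OMEGAEQN} at $\omega=\frac13$ is strictly positive, giving $\omega'(T')>0$, another contradiction. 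Hence $T'=T$.

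The main obstacle is locating the right algebraic form for $\rho'+\rho/z$ so that the numerator can be recognised as a perfect square after invoking hypothesis~\eqref{E:C5NEW}; without that observation the sign of $1+y_\ast^2 z^2\omega^2-2y_\ast^2 z^2\omega\rho$ is opaque, because the two hypotheses only control $y_\ast z\rho$ and $y_\ast^2 z^2\omega\rho$ separately. Once this identity is in hand, all four conclusions reduce to reading off signs from~\eqref{E:C5NEW},~\eqref{E:C6NEW}, and the derived bound $\omega>\frac{1}{y_\ast z}$.
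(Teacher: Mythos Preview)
Your proof is correct. For the $\rho'$ bounds your argument is essentially identical to the paper's: both derive the same algebraic identities (the paper writes them as $\frac{\rho'z}{\rho}=-2+2\frac{1-y_\ast^2z^2\omega\rho}{1-y_\ast^2z^2\omega^2}$ and $\frac{\rho'z}{\rho}=-1+\frac{(1-y_\ast z\omega)^2+2y_\ast z\omega(1-y_\ast z\rho)}{1-y_\ast^2z^2\omega^2}$), and both rely on the same perfect-square observation to fix the sign of the numerator.

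For the $\omega$ bounds the approaches genuinely differ. The paper derives the differential inequalities $\omega'>\frac{1-3\omega}{z}$ and $\omega'<\frac{1-\omega}{z}$ on all of $(1,T)$ and integrates them from $z=1$, obtaining the explicit estimates $\omega(z)>(\tfrac{1}{y_\ast}-\tfrac{1}{3})z^{-3}+\tfrac{1}{3}$ and $\omega(z)<(\tfrac{1}{y_\ast}-1)z^{-1}+1$. Your maximality/contradiction argument at $T'$ is more elementary and avoids any integration, but yields only the bare inequalities $\tfrac{1}{3}<\omega<1$. In this lemma nothing stronger is needed, so your route is a clean alternative; the paper's quantitative bounds are not used elsewhere either.
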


\begin{proof}
By~\eqref{E:C5NEW}--\eqref{E:C6NEW} we have $\omega>\rho$ on $(1, T)$ and 
\be
\omega(z)  >\frac1{y_\ast z} \ \ z\in (1, T). \label{E:C4NEW}
\ee
Therefore from~\eqref{E:RHOEQN} $\rho'<0$ on
$(1,T)$. 

\noindent
{\bf Proof of~\eqref{E:C3NEW}.}
We note that on $(1,T)$ due to $\omega>\rho$ and $\omega(z) >\frac1{y_\ast z}$ we have from~\eqref{E:OMEGAEQN}
$
\omega'(z)>\frac{1-3\omega(z)}{z}.
$
Integrating over $[1, z]$ for any $z\in(1,T)$ we conclude
\[
\omega(z) > (\frac1{y_\ast}-\frac13) z^{-3} + \frac13 \ge \frac13, \ \ z\in (1,T).
\]

We may rewrite~\eqref{E:OMEGAEQN} in the form
\begin{align}\label{E:REQN2}
\omega' = \frac{1-\omega}z + \frac{-2\omega+2y_\ast^2z^2\omega^2\rho}{z\left(1-y_\ast^2z^2\omega^2\right)}.
\end{align}
From~\eqref{E:C6NEW} we have on $(1,T)$ $-2\omega+2y_\ast^2z^2\omega^2\rho>0$ and therefore from~\eqref{E:REQN2} (together with $\omega(z) y_\ast z >1$) we conclude that 
$
\omega'(z)<\frac{1-\omega(z)}{z}.
$
Integrating over $[1, z]$ for any $z\in(1,T)$ we conclude from $y_\ast>1$
\[
\omega(z) < (\frac1{y_\ast}-1) z^{-1} + 1 < 1, \ \ z\in (1,T).
\]
Therefore~\eqref{E:C3NEW} holds.

\noindent
{\bf Proof of~\eqref{E:C3.5NEW}.}
We may rewrite~\eqref{E:RHOEQN} in the form
\begin{align}\label{E:ASIMP}
\frac{\rho'z}{\rho} = -2 +2 \frac{1-y_\ast^2 z^2\omega  \rho}{1-y_\ast^2 z^2\omega^2}.
\end{align}
Since by~\eqref{E:C6NEW} $1-\S^2 z^2 \omega \rho<0$ the lower bound follows immediately.
To prove the upper bound we rewrite~\eqref{E:RHOEQN} in the form
\begin{align}
\frac{\rho'z}{\rho} = -1 + \frac{1+y_\ast^2z^2 \omega^2-2y_\ast^2z^2 \omega \rho}{1-y_\ast^2z^2 \omega^2} = -1 + \frac{(1-y_\ast z\omega)^2+2y_\ast z \omega(1-y_\ast z \rho)}{1-y_\ast^2 z^2 \omega^2}. \label{E:EXTRA}
\end{align}
The last expression is strictly less than $-1$ by~\eqref{E:C4NEW} and~\eqref{E:C5NEW}.
\end{proof}


Finally, combining the previous two lemmas we obtain the desired forward global existence result in the outer region $z\ge1$.

\begin{proposition}[Forward global existence]\label{P:FGE}
Let $y_\ast\in[2,3]$ and
let $(\rho(\cdot;y_\ast),\omega(\cdot;y_\ast))$ be the unique local LP-type solution to~\eqref{E:RHOEQN}--\eqref{E:OMEGAEQN} given by Theorem~\ref{T:ANALYTICITY}.  Then there exists a unique forward global solution to~\eqref{E:RHOEQN}--\eqref{E:OMEGAEQN} on $[1,\infty)$ satisfying the following properties:
\begin{align}
\frac13 & <\omega(z)<1 \label{E:OMEGAUPDOWN}\\
-2& < \frac{\rho'(z) z}{\rho(z)} <-1, 
\end{align}
Moreover,
\begin{align}\label{E:ASBE}
\rho(z)= \frac{C}{z^2} \left(1+ O_{z\to\infty}(\frac{1}{z})\right), \ \ \omega(z)=1+ O_{z\to\infty}\left(\frac1z\right)
\end{align}
\end{proposition}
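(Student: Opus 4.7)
The plan is to combine Lemma~\ref{L:INITIAL} with Lemma~\ref{L:INVARIANT} to show that the invariant set
\[ \Bigl\{ \rho < \tfrac{1}{y_\ast z}, \; \rho\omega > \tfrac{1}{(y_\ast z)^2} \Bigr\} \]
is preserved along the flow for all $z>1$. Once this is done, the pointwise bounds~\eqref{E:OMEGAUPDOWN} and the bound on $\rho'z/\rho$ follow directly from Lemma~\ref{L:INVARIANT}, and the far-field asymptotics~\eqref{E:ASBE} will follow from a quick analysis of the ODE~\eqref{E:RHOEQN}--\eqref{E:OMEGAEQN} once the trapping bounds are in hand.

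First I would define $T\in(1,\infty]$ as the supremum of values such that the strict inequalities~\eqref{E:C5NEW}--\eqref{E:C6NEW} hold throughout $(1,T)$; by Lemma~\ref{L:INITIAL} we have $T>1$. I claim $T=\infty$. Indeed, on the invariant region the denominator $1-y_\ast^2 z^2\omega^2$ is strictly negative (since $y_\ast z\omega>1$), while Lemma~\ref{L:INVARIANT} bounds $\omega$ and (hence, via $\rho'$) $\rho$ from above and below, so the solution extends up to $z=T$ by standard ODE continuation. If $T<\infty$, continuity would force at least one of the invariant inequalities to become an equality at $z=T$. However, Lemma~\ref{L:INVARIANT} gives $(\rho z)'=\rho' z+\rho<0$ strictly on $(1,T)$, so $\rho(T)T<\rho(1)\cdot 1=1/y_\ast$, ruling out the first equality. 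Likewise, the identity~\eqref{E:USEFUL} combined with $\omega<1$ on $(1,T)$ yields $(\rho\omega z^2)'>0$ strictly, so $\rho(T)\omega(T)T^2>\rho(1)\omega(1)=1/y_\ast^2$, ruling out the second. Both contradictions force $T=\infty$.

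For the asymptotics, I would start from the logarithmic-derivative form of~\eqref{E:RHOEQN},
\begin{equation*}
\frac{(\rho z^2)'}{\rho z^2} \;=\; \frac{2\bigl(1-y_\ast^2 z^2\omega\rho\bigr)}{z\bigl(1-y_\ast^2 z^2\omega^2\bigr)}.
\end{equation*}
The monotonicity $\rho z<1/y_\ast$ together with $\omega\in(1/3,1)$ yields $|1-y_\ast^2 z^2\omega\rho|\lesssim y_\ast z$ in the numerator, while $|1-y_\ast^2 z^2\omega^2|\gtrsim z^2$ for large $z$ in the denominator, so the right-hand side is $O(z^{-2})$. Integrability produces a finite positive limit $C:=\lim_{z\to\infty}\rho(z)z^2$ together with the quantitative rate $\rho(z)z^2=C+O(z^{-1})$. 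Substituting into the rewritten $\omega$-equation
\begin{equation*}
\omega' \;=\; \frac{1-\omega}{z} + \frac{2\omega\bigl(y_\ast^2 z^2\omega\rho-1\bigr)}{z\bigl(1-y_\ast^2 z^2\omega^2\bigr)},
\end{equation*}
the correction term becomes $O(z^{-3})$, reducing the problem to the perturbed linear equation $(1-\omega)'=-(1-\omega)/z+O(z^{-3})$, which by standard variation-of-constants forces $1-\omega=O(z^{-1})$.

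The main obstacle is really just the global extension step: without the two structural identities — the strict sign of $(\rho z)'$ from the upper bound in Lemma~\ref{L:INVARIANT}, and the strict sign of $(\rho\omega z^2)'$ from~\eqref{E:USEFUL} — one could not preclude that the flow exits the invariant region at some finite $T$. Once the invariant set is shown to be truly dynamically trapping, the remaining asymptotic analysis is essentially a routine consequence of the bounds already proven.
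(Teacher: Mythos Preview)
Your proposal is correct and follows essentially the same route as the paper's proof: you establish $T=\infty$ by the same two monotonicity arguments (strict decrease of $\rho z$ from~\eqref{E:C3.5NEW} and strict increase of $\rho\omega z^2$ from~\eqref{E:USEFUL}), and your asymptotic analysis via the logarithmic derivative of $\rho z^2$ and the rewritten $\omega$-equation~\eqref{E:REQN2} matches the paper's use of~\eqref{E:ASIMP} and the identity $(\omega z)'=1-2\omega\frac{1-y_\ast^2 z^2\omega\rho}{1-y_\ast^2 z^2\omega^2}$. The only cosmetic difference is that you phrase the $\omega$-asymptotics through $(1-\omega)$ while the paper uses $(\omega z)'$, but these are equivalent.
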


\begin{proof}
Let $T$ be defined as in Lemma~\ref{L:INVARIANT} and assume 
that $T<\infty$. Notice that by the bounds in Lemma~\ref{L:INVARIANT}, both $\omega$ and $\rho$ remain bounded and away from the sonic point singularity for $z\in (1,T)$. 
At $T$ we must have either $\frac1{y_\ast T} =\rho(T)$ or $\rho(T)\omega(T)  = \frac1{y_\ast^2 T^2}$. 

Let $\frac1{y_\ast T} =\rho(T)$. Since $\left(\rho y_\ast z\right)'=\rho\S(1+\frac{\rho'z}{\rho})<0$ by~\eqref{E:C3.5NEW} for all $z\in(1,T)$, 
we must have $\rho(T)y_\ast T<\rho(1) y_\ast = 1$, a contradiction.

Let now $\rho(T)\omega(T)  = \frac1{y_\ast^2 T^2}$. Since $\omega<1$ on $(1,T)$ we conclude from~\eqref{E:USEFUL}
that $z^2 \rho(z)\omega(z)$ is strictly increasing on $(1,T)$. Therefore
\begin{align}
T^2\rho \omega > y_\ast^2 \rho(1)\omega(1) =1
\end{align}
a contradiction.
Therefore, the solution $(\rho(\cdot;y_\ast),\omega(\cdot;y_\ast))$ exists for all $z>1$.
Finally, since $\omega(z)>\frac13$ on $(1,\infty)$ and $z\rho\le\frac1{\S}$ by the above bounds, we conclude easily that $\lv\frac{1-y_\ast^2 z^2\omega  \rho}{1-y_\ast^2 z^2\omega^2}\rv\lesssim \frac1z$, $z>1$. 
It follows from~\eqref{E:ASIMP} that $\frac{\rho'z}{\rho} = -2 + O(\frac{1}{z})$ and this implies the $\rho$-asymptotics in~\eqref{E:ASBE}. From~\eqref{E:RHOEQN}--\eqref{E:OMEGAEQN} it is easy to see that $(\omega z)' = 1-\frac{\rho'}{\rho}z\omega -2\omega= 1+ \omega O(\frac{1}{z^2})$, where in the last equality we have used the $\rho$-asymptotics~\eqref{E:ASBE} and~\eqref{E:ASIMP}. This easily gives the $\omega$-asymptotics in~\eqref{E:ASIMP}.
%
\end{proof}

\section{The inner region $0\le z<1$}\label{S:INNERREGION}

By Theorem~\ref{T:ANALYTICITY} and Lemma~\ref{L:ANALYTIC} there exists an $r>0$ such that for any $\S\in[2,3]$ there is unique LP-type solution $(\rho(z;y_\ast),\omega(z;y_\ast))$ on $(1-r,1+r)$, which is analytic-in-$z$ and uniformly continuous with respect to $y_\ast$. The next lemma records the obvious statement that one can extend the existence interval as long as we are away from the sonic line. 

\begin{lemma}[Local existence and uniqueness away from the sonic line]\label{L:LOCALEX}
Let $\S\in[2,3]$ be given and assume that for some $z\in(0,1)$ the conditions
\begin{align}
1 - z^2\S^2\omega(z)^2>0, \ \ \rho(z)>0 \notag 
\end{align}
hold. Then there exists a unique smooth local-in-$z$ solution to the initial value problem~\eqref{E:RHOEQN}--\eqref{E:OMEGAEQN} on some time interval $(z-T,z+T)\subset(0,1)$. 
\end{lemma}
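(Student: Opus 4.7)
The plan is to apply the classical Cauchy--Lipschitz (Picard--Lindel\"of) theorem directly to the system~\eqref{E:RHOEQN}--\eqref{E:OMEGAEQN}, viewed abstractly as
\[
\frac{d}{ds}\begin{pmatrix}\rho\\ \omega\end{pmatrix} = F(s,\rho,\omega),
\]
where the vector field $F$ defined by the right-hand sides is rational, and smooth (indeed real-analytic) off the singular locus
\[
\Sigma := \{s=0\}\cup\{1-y_\ast^2 s^2\omega^2=0\}.
\]
Under the hypotheses of the lemma the initial point $(z,\rho(z),\omega(z))$ satisfies $z>0$ and $1-y_\ast^2 z^2\omega(z)^2>0$, so it lies off $\Sigma$. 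By continuity of the polynomials defining $\Sigma$ one can choose $\eta>0$ and $T>0$ so small that $(z-T,z+T)\subset(0,1)$ and the denominator $s\,(1-y_\ast^2 s^2 u^2)$ is bounded uniformly away from zero on the compact cylinder
\[
K := [z-T,z+T]\times [\rho(z)-\eta,\rho(z)+\eta]\times[\omega(z)-\eta,\omega(z)+\eta].
\]

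On $K$ both $F$ and its partial derivatives in $(\rho,\omega)$ are continuous and bounded, so $F$ is uniformly Lipschitz in $(\rho,\omega)$. The Picard--Lindel\"of theorem then yields a unique $C^1$ solution to the IVP on some subinterval $(z-T',z+T')$ with $T'\le T$. A standard bootstrap argument, using analyticity of $F$ in all three variables on a neighbourhood of $K$, upgrades the regularity to real-analyticity (alternatively, one can Taylor-expand in the spirit of Section~\ref{S:TAYLORSONIC}). I do not expect any genuine obstacle here: the only thing to verify is the quantitative choice of $T$ and $\eta$ ensuring simultaneously that the flow stays inside $(0,1)$ and that the sonic denominator remains strictly positive on the full cylinder, which follows at once from continuity.
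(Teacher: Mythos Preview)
Your proposal is correct and is exactly the approach the paper takes: its proof consists of the single sentence ``The proof is a standard consequence of the local well-posedness theory for ordinary differential equations.'' You have simply unpacked that sentence, correctly identifying the singular locus $\Sigma$, choosing a compact cylinder on which the rational vector field is smooth and Lipschitz, and invoking Picard--Lindel\"of.
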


\begin{proof}
The proof is a standard consequence of the local well-posedness theory for ordinary differential equations.
\end{proof}


To every $y_\ast\in[2,3]$ (i.e. $\omega_0=\omega(1)\in[\frac13,\frac12]$) we associate
the {\em sonic} time of existence to the left
\begin{align}\label{E:SUBSONIC}
s(y_\ast): = \inf \left\{z\in(0,1)\,\big| \text{ solution exists on $(z,1]$ and }\ \ \omega^2(z;y_\ast)z^2y_\ast^2<1\right\}.
\end{align} 
Clearly $0\le s(y_\ast)< 1-r$.
By Lemma~\ref{L:LOCALEX} the solution can be continued to the left starting at $z=1-r$ for a short time and the maximal time of existence to the left is smaller or equal to $s(y_\ast)$. Sonic time is of central importance in our analysis and our first goal is to show that there exists a $\S\in[2,3]$ such that $s(\S)=0$. To that end we prove a number of preparatory lemmas. We next collect important a priori bounds that hold on $(s(\S),1)$. 




\begin{lemma}\label{L:APRIORIODE}
Let $\S\in[2,3]$ be given and consider the unique LP-type solution $(\rho(\cdot;y_\ast),\omega(\cdot;y_\ast))$ to the left of $z=1$.
For any $z\in (s(y_\ast),1)$ we have the a priori bounds
\begin{align}
0 < \rho(z) & <\frac1{\S z}, \label{E:RHOAPRIORI}\\  
|\omega(z)| & <\frac1{\S z}, \label{E:OMEGAPRIORI} \\
(z\rho(z))' & >0. \label{E:ZRHOMONOTONE}
\end{align}
\end{lemma}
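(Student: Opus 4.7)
The bound \eqref{E:OMEGAPRIORI} is tautological: by the definition \eqref{E:SUBSONIC} of the sonic time $s(y_\ast)$, one has $y_\ast^2 z^2 \omega^2(z;y_\ast) < 1$ throughout $(s(y_\ast),1)$, which is exactly $|\omega(z)| < 1/(y_\ast z)$. The algebraic engine for the other two bounds is the identity one gets by multiplying \eqref{E:RHOEQN} by $z$ and adding $\rho$:
\[
(z\rho)' = \frac{\rho\bigl[1 + y_\ast^2 z^2\omega^2 - 2y_\ast^2 z^2 \omega\rho\bigr]}{1 - y_\ast^2 z^2 \omega^2} = \frac{\rho\bigl[(1-y_\ast z\omega)^2 + 2y_\ast z \omega(1 - y_\ast z \rho)\bigr]}{1 - y_\ast^2 z^2 \omega^2}.
\]
Given \eqref{E:OMEGAPRIORI}, the denominator is positive. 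If $\omega \le 0$, the first form has numerator bounded below by $\rho>0$ (the subtracted term has favourable sign). If $\omega > 0$, the second form is the sum of $(1-y_\ast z \omega)^2 > 0$ and $2 y_\ast z \omega(1-y_\ast z\rho) \ge 0$, using \eqref{E:OMEGAPRIORI} and the to-be-proved \eqref{E:RHOAPRIORI}. Consequently \eqref{E:ZRHOMONOTONE} will follow for free once \eqref{E:RHOAPRIORI} is in hand.

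For \eqref{E:RHOAPRIORI} I would run a continuity/bootstrap argument anchored at $z=1$. Setting $F(z):=(y_\ast z)^{-1}-\rho(z)$, the LP sonic data \eqref{E:LPCONDITION} give $F(1)=F'(1)=0$ and
\[
F''(1) \;=\; \frac{2}{y_\ast} - 2\rho_2 \;=\; \frac{(y_\ast-1)^2}{y_\ast(2y_\ast-3)} \;>\; 0\qquad\text{for } y_\ast\in[2,3],
\]
so $F>0$ and $\rho>0$ on some left-neighbourhood $(1-\delta,1)$. Now set
\[
T_0 := \inf\bigl\{z \in [s(y_\ast),1) \,:\, 0 < \rho(z') < (y_\ast z')^{-1} \text{ for all } z'\in(z,1)\bigr\},
\]
which is well-defined with $T_0<1$. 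Suppose, toward a contradiction, that $T_0 > s(y_\ast)$. Continuity then forces either $\rho(T_0)=0$ or $\rho(T_0)=(y_\ast T_0)^{-1}$.

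In the first case the point $(T_0, 0, \omega(T_0))$ lies strictly inside the subsonic region (since $T_0 > s(y_\ast)$, we have $|\omega(T_0)|<(y_\ast T_0)^{-1}$), so the right-hand side of \eqref{E:RHOEQN}--\eqref{E:OMEGAEQN} is smooth there; since $\rho\equiv 0$ coupled with the reduced $\omega$-equation is a solution through this data, standard ODE uniqueness forces $\rho\equiv 0$ on a neighbourhood of $T_0$, contradicting $\rho(1)=1/y_\ast$. In the second case the defining bounds of $T_0$ hold throughout the open interval $(T_0,1)$, so the identity above yields $(z\rho)'>0$ on $(T_0,1)$; then $z\rho$ is strictly increasing on $[T_0,1]$, giving $1/y_\ast = T_0\rho(T_0) < 1\cdot \rho(1) = 1/y_\ast$, an outright contradiction. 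Hence $T_0 = s(y_\ast)$, which proves \eqref{E:RHOAPRIORI}, and then \eqref{E:ZRHOMONOTONE} follows from the first paragraph. The delicate point is the second-order initialization at $z=1$: because both $F$ and $F'$ vanish there, positivity of the bootstrap must be extracted from the specific value of $\rho_2$ produced by the LP expansion; once that positivity is in place, the rest of the argument is a clean bootstrap driven by the structural identity for $(z\rho)'$.
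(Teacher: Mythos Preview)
Your proof is correct and follows essentially the same route as the paper: both introduce $f(z)=1-y_\ast z\rho(z)$ (your $F$ up to a factor), extract the second-order positivity $f''(1)=(y_\ast-1)^2/[y_\ast(2y_\ast-3)]>0$ from the LP data, and bootstrap leftward from $z=1$; your identity for $(z\rho)'$ is a rearrangement of the paper's equation~\eqref{E:FEQN}. The only cosmetic difference is that the paper first proves $\rho>0$ separately via the log form of~\eqref{E:RHOEQN} and then uses an integrating factor on~\eqref{E:FEQN}, whereas you fold both bounds into a single bootstrap and dispatch the endpoint $\rho(T_0)=0$ by ODE uniqueness---a slightly cleaner packaging of the same argument.
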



\begin{proof}
Let $\mathring z\in (s(y_\ast),1)$ be given. 
From the definition~\eqref{E:SUBSONIC} it follows that there exists an $\eta>0$ such that $\omega(z)^2z^2y_\ast^2<1-\eta$ for all $z\in[\mathring z,1-r]$ and in particular
\begin{align}\label{E:CETA}
\omega(z)^2 \le \frac{1-\eta}{y_\ast^2\mathring z^2}=:C_\eta, \ \ z\in[\mathring z,1-r].
\end{align}

We first show that $\rho$ remains positive on $(s(y_\ast),1)$.
Let $\bar z:=\inf_{z\in[\mathring z,1]}\left\{\rho(\zeta)>0 \ \text{for all} \ \zeta\in(z,1]\right\}$.  By Theorem~\ref{T:ANALYTICITY} we have $\bar z < 1-r$.
Since $\rho>0$ on $(\bar z,1]$ equation~\eqref{E:RHOEQN} gives
\be
\left(\log\rho\right)' =   -\frac{2\S^2 z \omega}{1-\S^2 z^2\omega^2}(\rho-\omega), \ \ z\in(\bar z,1]. \notag 
\ee
Therefore for any $z\in(\bar z,1-r]$ we get
\begin{align}
\rho(z) & =\rho(1-r) \exp\left(\int_{z}^{1-r} \frac{2\S^2 \zeta \omega}{1-\S^2 \zeta^2\omega^2}(\rho-\omega)\,d\zeta\right)  \notag \\
& \ge \rho(1-r) \exp\left(- \frac{2 \S^2 C_\eta^2}{\eta}\right) \exp\left(- \frac{2 \S^2 C_\eta}{\eta} \int_z^{1-r} \rho(\zeta)\,d\zeta \right). 
\notag 
\end{align}
The right-hand side is strictly positive and as $z\to\bar z$ it clearly remains strictly positive. Therefore $\bar z = \mathring z$.

In order to prove the upper bound for $\rho$, 
we consider  
\begin{align}
f(z) : = 1 - \S z \rho(z). \notag 
\end{align} 
Using~\eqref{E:RHOEQN} it is checked that 
\begin{align}\label{E:FEQN}
f'(z)  + f(z) \frac{2z\S^2\omega\rho}{1-\S^2z^2\omega^2} = - \frac{\S (1-\S z \omega)^2 \rho}{1-\S^2z^2\omega^2}
\end{align}
By the LP-type sonic conditions~~\eqref{E:ZEROCONST} and~\eqref{E:LPCONDITION} it is easy to see that $f(1)=f'(1)=0$. To determine the sign of $f$ near $z=1$ it is therefore necessary to compute the second derivative of $f$. 
Since
\begin{align}\notag
f''(z)\big|_{z=1} = -2\S\rho'(1) - \S \rho''(1) = 2-\frac{-y_\ast^2+6y_\ast-7}{ 2y_\ast-3} = \frac{y_\ast^2-2y_\ast+1}{2y_\ast-3},
\end{align}
we conclude that $f>0$ locally around $1$ as the above expression is strictly positive for $y_\ast\in[2,3]$. In fact, by choosing a possibly smaller $r$, we may assume $f(z)>0$ for all $z\in[1-r,1)$. 
We let $\tilde z:=\inf_{z\in[\mathring z,1]}\left\{f(\zeta)>0 \ \text{for all} \ \zeta\in(z,1]\right\}$. Since $\rho>0$ on $[\mathring z,1]$ it follows that the right-hand side of~\eqref{E:FEQN} is negative  for any $z\in(\tilde z, 1]$. Integrating~\eqref{E:FEQN} for any $z\in [\tilde z, 1-r]$ we get
\begin{align}
f(z) \ge f(1-r) \exp \left(\int_{z}^{1-r} \frac{2\zeta\S^2\omega\rho}{1-\S^2\zeta^2\omega^2}\,d\zeta\right).  \notag 
\end{align}
By an analogous argument as above, we conclude $f(\tilde z)>0$ and therefore $\tilde z = \mathring z$. Therefore $f(z)>0$ on $(s(y_\ast),1)$ as claimed. From~\eqref{E:FEQN} we then conclude $f'(z)<0$ which is equivalent to~\eqref{E:ZRHOMONOTONE}. 
\end{proof}


The following lemma shows that solutions which are a finite distance $\eta$ away from the sonic line and defined for all $z\ge \bar z>0$, can be extended to the left by a finite time depending only on $\eta$ and $\bar z$.

\begin{lemma}\label{L:TQUANT}
Let $\S\in[2,3]$ be given and consider the unique LP-type solution $(\rho(\cdot;y_\ast),\omega(\cdot;y_\ast))$ to the left of $z=1$, given by Theorem~\ref{T:ANALYTICITY}.
Assume that for some $\bar z\in(0,1-r)$ and $\eta>0$ we have $\bar z>s(y_\ast)$ and the conditions
\begin{align}\label{E:ETAZERO}
1 - z^2\S^2\omega(z;y_\ast)^2>\eta, \ \ \rho(z)>0, \ \ z\in[\bar z,1-r],
\end{align}
hold. Then there exists a $ t=t(\eta,\bar z)>0$ such that the solution can be continued to the interval $[\bar z- t,1)$ so that 
\begin{align}
1 - z^2\S^2\omega(z;y_\ast)^2>0, \ \ \rho(z)>0, \ \ z\in[\bar z- t,1-r]. \notag 
\end{align}
\end{lemma}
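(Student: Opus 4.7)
The plan is to apply standard ODE continuation theory with quantitative Lipschitz bounds, and then verify by continuity that the sub-sonic condition and positivity of $\rho$ persist on a (possibly shorter) extended interval. The lemma is really a book-keeping statement: the only substantive content is the explicit dependence $t = t(\eta, \bar z)$, with no dependence on $y_\ast \in [2,3]$.

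First, I would extract a priori bounds on the ``initial data'' at $z = \bar z$ from Lemma~\ref{L:APRIORIODE}: both $0 < \rho(\bar z) < \frac{1}{\S \bar z} \le \frac{1}{2 \bar z}$ and $|\omega(\bar z)| < \frac{1}{\S \bar z} \le \frac{1}{2 \bar z}$, so that the state vector is controlled purely in terms of $\bar z$. Combined with the hypothesis $1 - \bar z^2 \S^2 \omega(\bar z)^2 > \eta$ and $\rho(\bar z) > 0$, this places the triple $(\bar z, \rho(\bar z), \omega(\bar z))$ in the open set
\[
U := \left\{(z,\rho,\omega) : z > \tfrac{\bar z}{2},\ 1 - \S^2 z^2 \omega^2 > \tfrac{\eta}{2},\ |\rho| + |\omega| < \tfrac{4}{\bar z}\right\},
\]
on which the right-hand sides of \eqref{E:RHOEQN}--\eqref{E:OMEGAEQN} are real-analytic with bounds depending only on $\eta$ and $\bar z$ (uniformly in $y_\ast \in [2,3]$, since all $y_\ast$-dependence is polynomial).

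Next, I would apply the Picard--Lindelöf theorem to the backward-in-$z$ Cauchy problem at $z = \bar z$. Since the right-hand side is $C^1$ on $U$ with quantitative Lipschitz constant $L = L(\eta, \bar z)$ and the initial point sits a positive distance from $\partial U$, this yields a unique solution on some interval $[\bar z - t_0, \bar z]$, with $t_0 = t_0(\eta, \bar z) > 0$ depending only on these parameters. By continuity of the extended solution, we may shrink $t_0$ to $t \in (0, t_0]$ so that the solution remains inside $U$ for all $z \in [\bar z - t, \bar z]$; in particular $1 - \S^2 z^2 \omega(z)^2 > \eta/2 > 0$ on this interval. Finally, positivity of $\rho$ on $[\bar z - t, \bar z]$ follows from integrating the log-derivative form
\[
(\log \rho)' = -\frac{2\S^2 z \omega(\rho - \omega)}{1 - \S^2 z^2 \omega^2},
\]
whose integrand is bounded on $U$ by a constant depending only on $\eta$ and $\bar z$, so that Gronwall yields a strictly positive lower bound on $\rho(z)$.

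No step is a genuine obstacle; the lemma is routine local ODE theory. The only point requiring mild care is ensuring that the existence time $t$ is genuinely quantitative in $(\eta, \bar z)$ and uniform in $y_\ast \in [2,3]$ — and this follows because all constants in the Picard--Lindelöf argument above are extracted from the compact set $U$, whose defining parameters depend only on $\eta$ and $\bar z$.
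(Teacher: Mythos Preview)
Your proposal is correct and follows essentially the same approach as the paper: extract a priori bounds on $(\rho(\bar z),\omega(\bar z))$ from Lemma~\ref{L:APRIORIODE}, observe that on a region like your $U$ the right-hand side of~\eqref{E:RHOEQN}--\eqref{E:OMEGAEQN} is bounded and Lipschitz with constants depending only on $(\eta,\bar z)$ (uniformly in $y_\ast\in[2,3]$), and then run a quantitative Picard argument. The only cosmetic difference is that the paper writes out the Picard iteration explicitly and verifies the inductive bounds $|\omega_k|\le \frac{4}{y_\ast\bar z}$, $|\rho_k|\le M$, $1-y_\ast^2z^2\omega_k^2\ge\eta/2$ by hand, whereas you invoke Picard--Lindel\"of as a black box; the resulting $t=t(\eta,\bar z)$ is the same in spirit.
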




\begin{proof}
By~Lemma~\ref{L:APRIORIODE}
the following a priori bounds hold:
\begin{align}
\|\omega\|_{C^0([\bar z,1-r])}& < \frac1{\S \bar z},  \label{E:APRIORIOMEGAZERO}\\
\|\rho\|_{C^0([\bar z,1-r])}& <  \frac1{\S \bar z}. \label{E:APRIORIRHOZERO}
\end{align}
Formally, for any $0<z\le \bar z$ we write the equations~\eqref{E:RHOEQN}--\eqref{E:OMEGAEQN} in their integral form
\begin{align}
\omega(z) & = \omega(\bar z) + \int_{z}^{\bar z} \frac{3\omega-1}{\tau}\,d\tau 
 - 2\S^2\int_{z}^{\bar z} \mathcal F(\S,\rho,\omega)(\tau)\, d\tau,  \label{E:OMEGAEQNINTEGRATED}\\
 \rho(z) & =\rho(\bar z)  + 2\S^2\int_{z}^{\bar z} \mathcal G(\S,\rho,\omega)(\tau)\, d\tau,
 \label{E:RHOEQNINTEGRATED}
\end{align}
where
\begin{align}
\mathcal F(\S,\rho,\omega)(z) & : = \frac{z \omega^2(\rho-\omega)}{1-\S^2 z^2\omega^2}, \label{E:FDEF}\\
\mathcal G(\S,\rho,\omega)(z)& : = \frac{z \omega\rho(\rho-\omega)}{1-\S^2 z^2\omega^2}. \label{E:GDEF}
\end{align}
This motivates the following Picard iteration, where we let
\begin{align}
 \rho_n(z) & =\rho(\bar z)  + 2\S^2\int_{z}^{\bar z} \mathcal G(\S,\rho_{n-1},\omega_{n-1})(\tau)\, d\tau.
 \label{E:RHON}\\
\omega_n(z) & = \omega(\bar z) + \int_{z}^{\bar z} \frac{3\omega_{n-1}-1}{\tau}\,d\tau 
 - 2\S^2\int_{z}^{\bar z} \mathcal F(\S,\rho_{n-1},\omega_{n-1})(\tau)\, d\tau.  \label{E:OMEGAN}
\end{align}
For an $M>1$ sufficiently large and $t = t(\bar z,\eta)<\frac{\bar z}2 $ sufficiently small (both to be specified below), we make the inductive assumptions
\begin{align}
\lv \omega_k(z) \rv & \le \frac 4{\S\bar z}, \ \ z\in [\bar z- t, \bar z], \ \ k=0,1,2,\dots, n-1, \label{E:OMEGAIND} \\
\lv \rho_k(z) \rv & \le M, \ \ z\in [\bar z-t, \bar z], \ \ k=0,1,2,\dots, n-1, \label{E:RHOIND}\\
1 - z^2\S^2\omega_k^2(z) & \ge \frac\eta2, \ \ z\in [\bar z- t, \bar z], \ \ k=0,1,2,\dots, n-1. \label{E:ETANIND}
\end{align}
Here we choose to start the iteration with constant functions $(\rho_0(z),\omega_0(z))\equiv (\rho(\bar z),\omega(\bar z))$, $z\in [\bar z-t,\bar z]$ so that it satisfies the inductive assumptions.
From~\eqref{E:RHON}, we easily conclude
\begin{align}
\lv \rho_n(z)\rv \le \lv \rho(\bar z)\rv + \frac{CM^2}{\bar z^2 \eta} |z-\bar z|, \ \ z\in [\bar z-t, \bar z], \notag 
\end{align}
and therefore, for $t$ sufficiently small and a sufficiently large $M$ (but from now on fixed), we obtain the bound~\eqref{E:RHOIND} for $k=n$.
From~\eqref{E:OMEGAN} we easily conclude
\begin{align}\label{E:OMEGANBOUND}
\lv \omega_n(z)\rv \le \lv \omega(\bar z)\rv +  C \left(1+ 3|\omega_{n-1}|_{C^0}\right)\frac{|z-\bar z|}{\bar z}+ \frac{CM}{\bar z^3 \eta} |z-\bar z|, \ \ z\in [\bar z-t, \bar z],
\end{align}
and therefore, for $t=t(\eta,\bar z)$ sufficiently small we obtain the bound~\eqref{E:OMEGAIND} for $k=n$ using~\eqref{E:APRIORIOMEGAZERO}.
We also observe that for any $z\in[\bar z- t,\bar z]$
\begin{align}
1-\S^2z^2\omega_n^2 & = 1-\S^2z^2\omega_0^2 + \S^2z^2 \sum_{k=1}^{n} \left(\omega_{k-1}^2-\omega_k^2\right) \notag \\
& \ge 1-\S^2z^2\omega_0^2 - C\bar z \sum_{k=1}^{n}|\omega_k-\omega_{k-1}| 
\label{E:SONICBOUND}
\end{align}

Subtracting two iterates $(\omega_n,\rho_n)$ and $(\omega_{n-1},\rho_{n-1})$ 
we conclude
\begin{align}
\omega_n(z)-\omega_{n-1}(z) & =3\int_{z}^{\bar z} \frac{\omega_n-\omega_{n-1}}{\tau}\,d\tau
- 2\S^2\int_{z}^{\bar z} \left(\mathcal F(\S,\rho_{n-1},\omega_{n-1})-\mathcal F(\S,\rho_{n-2},\omega_{n-2})\right)\,d\tau,  \label{E:DIFFOMEGAN}\\
\rho_n(z)-\rho_{n-1}(z) & = 2\S^2\int_{z}^{\bar z} \left(\mathcal G(\S,\rho_{n-1},\omega_{n-1})-\mathcal G(\S,\rho_{n-2},\omega_{n-2})\right) \,d \tau, \label{E:DIFFRHON}
\end{align}
A simple algebraic manipulation and the bounds~\eqref{E:ETAZERO},~\eqref{E:APRIORIOMEGAZERO}, and~\eqref{E:APRIORIRHOZERO}  imply that there exists a constant $\tilde C=\tilde C(M,\bar z)$ such that 
for all $1\le k\le n-1$ and $z\in[\bar z- t, \bar z]$ 
\begin{align}
 \lv \mathcal F(\S,\rho_k,\omega_k) -  \mathcal F(\S,\rho_{k-1},\omega_{k-1}) \rv  & \le \frac{\tilde C}{\eta^2}
 \left(|\omega_{k}-\omega_{k-1}|+|\rho_{k}-\rho_{k-1}| \right),\label{E:ESTFITERATION}\\
 \lv \mathcal G(\S,\rho_k,\omega_k) -   \mathcal G(\S,\rho_{k-1},\omega_{k-1}) \rv  & \le  
 \frac{\tilde C}{\eta^2}\left(|\omega_{k}-\omega_{k-1}|
+|\rho_{k}-\rho_{k-1}| \right).\label{E:ESTGITERATION}
\end{align}
Allowing the constants $C$ to change from line to line, but to possibly depend on $\bar z, \eta$, 
we plug~\eqref{E:ESTFITERATION}--\eqref{E:ESTGITERATION} back into~\eqref{E:DIFFOMEGAN}--\eqref{E:DIFFRHON} and using $\bar z-t\ge \frac{\bar z}{2}$  we obtain for $k=1,2,\dots n$ 
\begin{align}
u_k(z) &\le C u_{k-1}(z) |z-\bar z|, \notag \\ 
u_k(z)& : = \lv\omega_n(z)-\omega_{n-1}(z) \rv_{C^0([z,\bar z])} + \lv \rho_n(z)-\rho_{n-1}(z)  \rv_{C^0([z,\bar z])}.\notag 
\end{align}
Choosing $t=t(\eta,\bar z)$ sufficiently small we conclude that there exists a constant $0<c<1$ such that $u_k\le c u_{k-1}$ for all $k=1,2,\dots, n$. 
By~\eqref{E:OMEGANBOUND} and~\eqref{E:SONICBOUND}
\begin{align}\label{E:SONICETAN}
1-\S^2z^2\omega_n^2 > \eta - C\bar z\sum_{k=1}^{n} c^k 
> \frac\eta2, \ \ z\ge \bar z- t,
\end{align}
for $t=t(\eta,\bar z)$ and therefore $c$ sufficiently small. Since we can choose $t$ so small that $t<\frac {\bar z}2$ bound~\eqref{E:SONICETAN} gives us~\eqref{E:ETANIND} with $k=n$. 
 By the standard arguments we pass to a limit as $n\to\infty$ and obtain the unique LP-type solution on the interval $[\bar z-t,1]$. 
\end{proof}



\begin{lemma}[No blow up before the sonic line]
Let $\S\in[2,3]$ be given and consider the unique LP-type solution $(\rho(\cdot;y_\ast),\omega(\cdot;y_\ast))$ to the left of $z=1$.
If $s(y_\ast)>0$ then
\begin{align}
\lim_{z\to s(y_\ast)}\omega(z)^2 = \frac1{\S^2s(\S)^2}. \notag 
\end{align}
\end{lemma}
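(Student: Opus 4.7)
I would argue by contradiction using Lemma~\ref{L:TQUANT}. Set $g(z) := 1 - y_\ast^2 z^2 \omega(z;y_\ast)^2$; by the a priori bound $|\omega(z)| < 1/(y_\ast z)$ of Lemma~\ref{L:APRIORIODE} together with the definition~\eqref{E:SUBSONIC} of $s(y_\ast)$, $g$ is continuous and strictly positive on $(s(y_\ast), 1]$. The assertion is equivalent to $\lim_{z\to s(y_\ast)^+} g(z)=0$.

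The first step is to show that $\rho$ and $\omega$ each possess limits at $s(y_\ast)^+$. Lemma~\ref{L:APRIORIODE} yields $(z\rho(z))'>0$ and $0<z\rho(z)<1/y_\ast$, so monotonicity and boundedness give $z\rho(z)\to L_1\in[0,1/y_\ast]$, hence $\rho(z)\to L_1/s(y_\ast)$. For $\omega$, the identity~\eqref{E:USEFUL} reads $(\rho\omega z^2)' = (1-\omega)\rho z$, whose right-hand side is uniformly bounded on $(s(y_\ast),1]$ by the a priori estimates; thus $\rho\omega z^2$ is Lipschitz and converges to some $L_2$. When $L_1>0$, writing $\omega(z)=(\rho\omega z^2)/(z\cdot z\rho)$ gives $\omega(z)\to \omega^\ast := L_2/(L_1 s(y_\ast))$, and in particular $\omega^2(z)\to(\omega^\ast)^2$.

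Suppose now for contradiction that $y_\ast^2 s(y_\ast)^2(\omega^\ast)^2<1$, so $\lim_{z\to s(y_\ast)^+} g(z)>0$. Then there exist $\eta>0$ and $\delta>0$ with $g(z)\ge \eta$ on $(s(y_\ast),s(y_\ast)+\delta]$. Since $g$ is continuous and strictly positive on the compact interval $[s(y_\ast)+\delta,1-r]$, its minimum there is some $\eta_1>0$, and setting $\eta'=\min(\eta,\eta_1)$ yields $g\ge \eta'$ uniformly on $(s(y_\ast),1-r]$. For any $\bar z\in(s(y_\ast),s(y_\ast)+\delta]$, Lemma~\ref{L:TQUANT} furnishes a continuation time $t(\eta',\bar z)>0$ on which the subsonic LP-type solution extends to the left. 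Inspecting the proof of Lemma~\ref{L:TQUANT}, the smallness constraints on $t$ involve only $\eta'$ and inverse powers of $\bar z$, which are controlled uniformly on the compact set $[s(y_\ast),s(y_\ast)+\delta]$ because $s(y_\ast)>0$. This gives a uniform $t_\ast>0$ with $t(\eta',\bar z)\ge t_\ast$ throughout that interval. Choosing $\bar z\in(s(y_\ast),s(y_\ast)+\min(\delta,t_\ast))$ then extends the subsonic solution to an interval reaching below $s(y_\ast)$, contradicting~\eqref{E:SUBSONIC} and forcing $(\omega^\ast)^2=1/(y_\ast^2 s(y_\ast)^2)$.

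The main obstacle is the degenerate case $L_1=0$, i.e.\ $\rho(z)\to 0$, in which the ratio argument for extracting a limit of $\omega$ breaks down. In that regime I would return to the ODE~\eqref{E:OMEGAEQN}, using the integrated form of $(\log\rho)'$ derived from~\eqref{E:RHOEQN} together with the impossibility of extension past $s(y_\ast)$ to run the same Lemma~\ref{L:TQUANT} contradiction at the level of $\omega^2$ alone, or alternatively to rule out $L_1=0$ entirely (since if $\omega$ were bounded away from $0$, the inequality $(\log\rho)'\lesssim |\omega|/g$ would prevent $\rho$ from decaying to $0$ on a finite interval unless $g$ itself tends to $0$). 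A secondary but routine bookkeeping task is to trace constants in the proof of Lemma~\ref{L:TQUANT} to confirm the uniform lower bound $t_\ast$ on the continuation time.
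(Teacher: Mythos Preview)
Your contradiction via Lemma~\ref{L:TQUANT} is exactly the paper's argument; the paper's entire proof is two sentences: assume the conclusion fails, deduce a uniform $\eta>0$ with $1-y_\ast^2z^2\omega^2>\eta$ on $(s(y_\ast),1-r)$, then apply Lemma~\ref{L:TQUANT} at some $\bar z$ just above $s(y_\ast)$ (with $t=t(\eta,s(y_\ast))$, since the constants in that proof depend only on a positive lower bound for $\bar z$) to extend below $s(y_\ast)$, contradicting~\eqref{E:SUBSONIC}. The paper does not first establish that $\lim\omega$ exists, nor does it discuss uniformity of the continuation time.

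Your additional work extracting limits of $z\rho$ and $\rho\omega z^2$ via monotonicity and the identity~\eqref{E:USEFUL} is correct and in fact makes the argument more complete: read literally, the paper's passage from ``$\lim g\neq 0$'' to ``$g>\eta$ uniformly'' handles only $\liminf g>0$ and leaves an oscillatory scenario unaddressed, whereas your route genuinely pins down $\lim\omega$. The degenerate case $L_1=0$ you flag can be closed in one line rather than left as a loose end: if $g\ge\eta$ uniformly on $(s(y_\ast),1-r]$, then the formula $(\log\rho)'=-2y_\ast^2 z\omega(\rho-\omega)/g$ from the proof of Lemma~\ref{L:APRIORIODE} together with $\rho,|\omega|<1/(y_\ast z)$ shows $|(\log\rho)'|$ is bounded on that finite interval, so $\rho$ is bounded below away from $0$ and $L_1>0$. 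Thus $L_1=0$ already forces $\inf g=0$, and the contradiction runs without needing a separate analysis.
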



\begin{proof}
Suppose the opposite. Then there exists an $\eta>0$ such that $1-z^2\S^2\omega(z)^2>\eta$ for all $z\in(s(y_\ast),1-r)$. By Lemmas~\ref{L:APRIORIODE}--\ref{L:TQUANT}
 there exists a constant $t=t(\eta,s(\S))$ such that the solution can be continued to the interval $(s(y_\ast)-t,1-r)$ and stay below the sonic line. A contradiction.
\end{proof}


\subsection{Sonic time continuity properties}\label{SS:SC}


Using a continuity argument we next show that the sonic time function $\S\to s(\S)$ is upper semi-continuous.

\begin{proposition}\label{P:LOWERSEMICONT}
Let $\S\in[2,3]$ be given and consider the unique LP-type solution to the problem~\eqref{E:RHOEQN}--\eqref{E:OMEGAEQN} to the left of $z=1$.
\begin{enumerate}
\item[{\em (a)}] (Upper semi-continuity of the sonic time). Then 
\[
\limsup_{\tilde y_\ast \to y_\ast} s(\tilde y_\ast) \le s(y_\ast),
\]
i.e. the map $\S\to s(\S)$ is upper semi-continuous. In particular, if $s(y_\ast)=0$ then the map $s(\cdot)$ is continuous at $y_\ast$. 

\item[{\em (b)}] ([Continuity of the flow away from the sonic time])
Let $\{y^n_\ast\}_{n\in\mathbb N}\subset[2,3]$ and $y_\ast\subset[2,3]$ satisfy $\lim_{n\to\infty}y^n_\ast = y_\ast$. Let 
$1-r>z>\max\{s(y_\ast),\sup_{n\in\mathbb N}s(y_\ast^n)\}$. Then 
\begin{align}
\lim_{n\to\infty}\omega(z;y_\ast^n) = \omega(z;y_\ast), \ \ \lim_{n\to\infty}\rho(z;y_\ast^n) = \rho(z;y_\ast).\notag 
\end{align}

\item[{\em (c)}] 
Let $\{y^n_\ast\}_{n\in\mathbb N}\subset[2,3]$ and $y_\ast\subset[2,3]$ satisfy $\lim_{n\to\infty}y^n_\ast = y_\ast$. Assume that there exist $0<Z<1-r$ and $\eta>0$ such that $s(y_\ast^n)<Z$ for all $n\in\mathbb N$ and the following uniform bound holds:
\begin{align}\label{E:LOWERBOUNDN}
1 - (y_\ast^n)^2 z^2 \omega(z;y_\ast^n)^2>\eta,  \ \ n\in\mathbb N, \ \ z\in[Z,1-r].
\end{align}
Then there exists a $T=T(\eta,Z)>0$ such that 
\begin{align}\label{E:CLAIMUNIFCONT}
s(y_\ast)<Z-T, \ \ s(y_\ast^n)<Z-T, \ \ n\in\mathbb N.
\end{align}

\end{enumerate}
\end{proposition}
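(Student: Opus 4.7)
\emph{Proof plan.} The common theme will be that the local well-posedness near $z=1$ (Theorem~\ref{T:ANALYTICITY} and Lemma~\ref{L:ANALYTIC}) gives continuous dependence of the initial data $(\rho(1-r;y_\ast),\omega(1-r;y_\ast))$ on $y_\ast$, and that on any compact subinterval of $(s(y_\ast),1-r]$ the right-hand side of~\eqref{E:RHOEQN}--\eqref{E:OMEGAEQN} is smooth, so standard ODE continuous dependence in parameters applies. For (a) I will fix $\varepsilon>0$, set $\bar z:=s(y_\ast)+\varepsilon/2$ (assuming $\bar z<1-r$, the other case being trivial), and extract from the compactness of $[\bar z,1-r]$ an $\eta>0$ with $1-y_\ast^2 z^2\omega(z;y_\ast)^2>\eta$ there. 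Standard continuous dependence will then show that for $\tilde y_\ast$ close to $y_\ast$ the perturbed flow exists on $[\bar z,1-r]$ with $1-\tilde y_\ast^2 z^2\omega(z;\tilde y_\ast)^2>\eta/2$ on that interval, forcing $s(\tilde y_\ast)\le\bar z<s(y_\ast)+\varepsilon$; continuity at points with $s(y_\ast)=0$ is then automatic because $s\ge 0$. Part (b) follows from exactly the same continuous-dependence argument applied on the compact interval $[z,1-r]$, where by hypothesis the limit solution stays a positive distance from the sonic line.

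The substantive part is (c). The difficulty is that upper semi-continuity runs the wrong way: knowing $s(y_\ast^n)<Z$ for all $n$ does not entail $s(y_\ast)<Z$, so part (b) cannot be invoked directly. My plan is a compactness argument. The a priori bounds~\eqref{E:RHOAPRIORI}--\eqref{E:OMEGAPRIORI} of Lemma~\ref{L:APRIORIODE} together with $y_\ast^n\in[2,3]$ give a uniform $L^\infty$ bound on $\rho(\cdot;y_\ast^n)$ and $\omega(\cdot;y_\ast^n)$ on $[Z,1-r]$; combined with the uniform sonic-distance lower bound~\eqref{E:LOWERBOUNDN} inserted into~\eqref{E:RHOEQN}--\eqref{E:OMEGAEQN}, this also gives uniform bounds on the first derivatives, hence equicontinuity. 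Arzel\`a--Ascoli then extracts a subsequence converging uniformly on $[Z,1-r]$ to a limit which solves~\eqref{E:RHOEQN}--\eqref{E:OMEGAEQN}. By Lemma~\ref{L:ANALYTIC} the endpoint data at $z=1-r$ converges to $(\rho(1-r;y_\ast),\omega(1-r;y_\ast))$, so uniqueness identifies the subsequential limit with $(\rho(\cdot;y_\ast),\omega(\cdot;y_\ast))$; since every subsequential limit coincides, the whole sequence converges uniformly on $[Z,1-r]$. Passing to the limit in~\eqref{E:LOWERBOUNDN} yields $1-y_\ast^2 z^2\omega(z;y_\ast)^2\ge\eta$ there, and the exponential lower bound for $\rho$ established in the proof of Lemma~\ref{L:APRIORIODE} passes to the limit to give $\rho(Z;y_\ast)>0$. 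Hence $s(y_\ast)\le Z$ and the hypotheses of Lemma~\ref{L:TQUANT} are met by both $y_\ast$ and every $y_\ast^n$ with threshold $\eta/2$. Setting $T:=t(\eta/2,Z)>0$ from Lemma~\ref{L:TQUANT} and applying that lemma to $y_\ast$ and to each $y_\ast^n$ then delivers the desired uniform extension~\eqref{E:CLAIMUNIFCONT}.

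The main obstacle is thus the Arzel\`a--Ascoli/uniqueness step in (c): without it one cannot guarantee that the limit flow reaches as far to the left as the approximating sequence, which must be known before Lemma~\ref{L:TQUANT} can be invoked with a quantitative extension time that is truly uniform in $n$ \emph{and} controls the limit $y_\ast$.
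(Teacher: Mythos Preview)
Your argument is correct. For parts (a) and (b) you follow essentially the same Gr\"onwall/continuous-dependence route as the paper.

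For part (c) you take a genuinely different path. The paper does \emph{not} use Arzel\`a--Ascoli. Instead it observes that the constants appearing in the Gr\"onwall estimate of part (a) (the a~priori bounds from Lemma~\ref{L:APRIORIODE} on $[Z,1-r]$ and the sonic-distance parameter) depend only on $\eta$ and $Z$, not on the particular base point. Hence the neighborhood size $\delta$ produced by the argument of part (a) can be chosen uniformly: there is a single $\delta=\delta(\eta,Z)$ such that for every $n$ and every $\tilde y_\ast$ with $|\tilde y_\ast-y_\ast^n|<\delta$ one has $1-\tilde y_\ast^2z^2\omega(z;\tilde y_\ast)^2>\eta/2$ on $[Z,1-r]$, and then Lemma~\ref{L:TQUANT} gives $s(\tilde y_\ast)<Z-T$ with $T=t(\eta/2,Z)$. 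Since $y_\ast^n\to y_\ast$, eventually $|y_\ast-y_\ast^n|<\delta$, so $y_\ast$ itself falls under this conclusion.

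Your compactness route is also valid: the identification of the Arzel\`a--Ascoli limit with $(\rho(\cdot;y_\ast),\omega(\cdot;y_\ast))$ via the endpoint data at $z=1-r$ and ODE uniqueness away from the sonic line indeed forces $s(y_\ast)<Z$ (strictly, since the limit inherits the bound $1-y_\ast^2z^2\omega^2\ge\eta$ on $[Z,1-r]$, contradicting the ``no blow up before the sonic line'' lemma if $s(y_\ast)=Z$), after which Lemma~\ref{L:TQUANT} applies uniformly. The trade-off: the paper's argument is shorter and reuses the machinery of part (a) verbatim, but requires noticing that the Gr\"onwall constants are uniform; your argument is more self-contained and does not rely on tracking those constants, at the cost of the extra compactness/identification step.
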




\begin{proof}
{\em Proof of part (a).}
For any $\S\in[2,3]$, on the interval $(s(\S), 1]$ by Lemma~\ref{L:APRIORIODE} we have the a priori bounds
\begin{align}\label{E:APRIORICONT}
|\rho(z;\S)|\le \frac1{\S z}\le\frac{1}{2z}, \ \ |\omega(z,\S)| \le \frac1{\S z}\le\frac{1}{2z}, \ \ y_\ast \in [2,3], \ z\in(s(\S),1).  
\end{align}

Fix an arbitrary $\mathring z \in (s(y_\ast),1-r)$. By the definition of the sonic time $s(y_\ast)$, there exists an $\eta>0$ such that 
\begin{align}\label{E:ETA}
1 - z^2\S^2\omega(z)^2>\eta, \ \ z\in[\mathring z,1-r],
\end{align}
where $\omega(z):=\omega(z;\S)$. 
By~\eqref{E:APRIORICONT} it is clear that there exists a constant $C = C(\mathring z)$ 
such that for any $\tilde y_\ast\in[2,3]$
\begin{align}\label{E:APRIORIDELTA}
|\rho(z;\tilde y_\ast)|\le C, \ \ |\omega(z,\tilde y_\ast)| \le C, 
\ \ z\in[\mathring z, 1-r]\cap (s(\tilde y_\ast), 1-r).
\end{align}

Let $1>\delta>0$ be a small number to be specified later. Let $|\tilde y_\ast-y_\ast|<\delta$ and consider the two solutions $(\rho(z;\S),\omega(z;\S))$ and $(\rho(z;\tilde y_\ast), \omega(z;\tilde y_\ast))$ on the interval $(Z,1-r]$, 
where
\[
Z : = \max\{s(\tilde y_\ast), \mathring z\}. 
\]
Clearly both solutions are well-defined on $( Z,1-r]$..

Our goal is to show that $Z=\mathring z$ if $\delta$ is sufficiently small. To that end, assume the opposite, i.e. $Z=s(\tilde y_\ast)$.
In the rest of the proof the constant $C$ may change from line to line, but may depend only on $\mathring z$ and $y_\ast$.

For any $z\in(Z,1-r)$ integrating~\eqref{E:RHOEQN}--\eqref{E:OMEGAEQN} over $[z,1-r]$ to obtain
\begin{align}
\omega(z) & = \omega(1-r) + \int_{z}^{1-r} \frac{3\omega-1}{\tau}\,d\tau 
 - 2\S^2\int_{z}^{1-r} \frac{\tau \omega^2(\rho-\omega)}{1-\S^2\tau^2\omega^2}\, d\tau,  \label{E:OMEGAEQNINTEGRATED}\\
 \rho(z) & =\rho(1-r)  + 2\S^2\int_{z}^{1-r} \frac{\tau \omega\rho(\rho-\omega)}{1-\S^2\tau^2\omega^2}\, d\tau.
 \label{E:RHOEQNINTEGRATED}
\end{align}
For any $y_\ast,\tilde y_\ast\in [2,3]$ denote the corresponding LP- type solutions by $(\rho,\omega)$ and $(\tilde\rho,\tilde\omega)$ respectively. From~\eqref{E:OMEGAEQNINTEGRATED}--\eqref{E:RHOEQNINTEGRATED} we obtain
\begin{align}
\omega(z)-\tilde\omega(z) & = \omega(1-r)-\tilde\omega(1-r)+3\int_{z}^{1-r} \frac{\omega-\tilde\omega}{\tau}\,d\tau
- 2\S^2\int_{z}^{1-r} \left(\mathcal F(\S,\rho,\omega)-\mathcal F(\tilde y_\ast,\tilde\rho,\tilde\omega)\right)\,d\tau,  \label{E:DIFFOMEGA}\\
\rho(z)-\tilde\rho(z) & = \rho(1-r) - \tilde\rho(1-r)+2\S^2\int_{z}^{1-r} \left(\mathcal G(\S,\rho,\omega)-\mathcal G({\tilde y}_\ast,\tilde\rho,\tilde\omega)\right) \,d \tau, \label{E:DIFFRHO}
\end{align}
where the nonlinearities $\mathcal F$ and $\mathcal G$ are defined in~\eqref{E:FDEF} and~\eqref{E:GDEF}.

We let
\begin{align}
g(z): = \lv \omega(z)-\tom(z)\rv + \lv \rho(z)-\tr(z)\rv.\notag 
\end{align}
Since 
$1 - \tilde y_\ast^2 z^2 \tom^2 = 1 - \S^2 z^2 \omega^2 + z^2\left(\omega^2-\tom^2\right)\tilde y_\ast^2 +z^2\omega^2\left(\S^2-\tilde y_\ast^2\right)$, from~\eqref{E:APRIORIDELTA} we conclude
\begin{align}
1 - \tilde y_\ast^2 z^2 \tom^2  & \ge 1 - \S^2 z^2 \omega^2 - C \left(g(z)+\lv \S - \tilde y_\ast\rv\right) \notag \\
& \ge \eta - C \left(g(z)+\lv \S - \tilde y_\ast\rv\right). \label{E:TILDEETA}
\end{align}
We let 
\be
\bar\eta(z) : = \eta - C \left(g(z)+\lv \S - \tilde y_\ast\rv\right).
\label{E:BARETADEF}
\ee
Clearly, for $\delta>0$  and $|1-r-z|$ sufficiently small, we have $\bar\eta>\frac\eta2$ by continuity. Let  
\begin{align}\label{E:BARZDEF}
\bar Z : = \inf_{Z<z<1-r} \left\{\bar\eta(z)>\frac\eta2\right\}.
\end{align}
For any $z\ge \bar Z$ a simple algebraic manipulation and the bounds~\eqref{E:ETA},~\eqref{E:TILDEETA}, and~\eqref{E:APRIORIDELTA} 
give
\begin{align}
 \lv \mathcal F(\S,\rho,\omega) -  \mathcal F({\tilde y}_\ast,\tr,\tom) \rv  & \le \frac{C}{\eta\bar\eta}\left(|\omega-\tom|
+|\rho-\tr| + |\S-{\tilde y}_\ast|\right), \ \ |y_\ast-\tilde y_\ast|<\delta, \label{E:ESTF}\\
 \lv \mathcal G(\S,\rho,\omega) -  \mathcal G({\tilde y}_\ast,\tr,\tom) \rv  & \le \frac{C}{\eta\bar\eta}\left(|\omega-\tom|
+|\rho-\tr| + |\S-{\tilde y}_\ast|\right), \ \ |y_\ast-\tilde y_\ast|<\delta.\label{E:ESTG}
\end{align}
The  identities~\eqref{E:DIFFOMEGA}--\eqref{E:DIFFRHO} and estimates~\eqref{E:ESTF}--\eqref{E:ESTG} now give 
\begin{align}
g(z) & \le g(1-r) + \frac{C}{\eta\bar\eta} |\S-\tilde y_\ast| +  \frac3{\mathring z} \int_z^{1-r} |\omega(\tau)-\tom(\tau)|\,d\tau + \frac{C}{\eta\bar\eta} \int_z^{1-r} g(\tau)\,d\tau  \notag \\
& \le  g(1-r) + \frac{C}{\eta\bar\eta} |\S-\tilde y_\ast|+ \frac{C}{\eta\bar\eta} \int_z^{1-r} g(\tau)\,d\tau, \ \ z\in[\bar Z, 1-r] .\label{E:GBOUND}
\end{align}
It follows by a Gr\"onwall argument and~\eqref{E:BARZDEF} that
\begin{align}
g(z) & \le \left(g(1-r) + \frac{C}{\eta\bar\eta} |\S-\tilde y_\ast|\right)  \frac{C}{\eta\bar\eta} e^{ \frac{C}{\eta\bar\eta}(1-r-z)}, \notag\\
& \le \left(g(1-r) + \frac{C}{\eta^2} |\S-\tilde y_\ast|\right)  \frac{C}{\eta^2} e^{ \frac{C}{\eta^2}(1-r-z)}, \ \ z\in[\bar Z, 1-r].\notag 
\end{align}
We note that for any given $\delta'>0$, there exists a $\delta>0$ such that $g(1-r)<\delta'$ for all $|\S-\tilde y_\ast|<\delta$.
Therefore, for any given $\epsilon>0$ we can choose a $\delta=\delta(\eta,\epsilon)$ 
sufficiently small so that 
for all $|\S-\tilde y_\ast|<\delta$ we have the bound 
\be
g(z)<\epsilon, \ \ \bar Z < z \le 1-r. \notag 
\ee
However, by~\eqref{E:BARETADEF} we then have
\begin{align}
\bar\eta(\bar Z) \ge \eta - C\left(\epsilon+\delta\right)>\frac\eta2, \ \ \text{ for $\delta$ sufficiently small}. \notag 
\end{align}
This is only possible if $\bar Z = Z$.
This gives a uniform lower bound for $1 - \tilde y_\ast^2 z^2 \tom^2$ on $(Z,1-r]$ and this contradicts the assumption $Z=s(\tilde y_\ast)$. Therefore $Z=\mathring z$ and $s(\tilde y_\ast)$ is strictly smaller that $\mathring z$ by Lemma~\ref{L:TQUANT}. 
Since $\mathring z>s(\S)$ is arbitrary it follows that
for any $\varepsilon>0$ there exists a $\delta>0$ such that $|\tilde y_\ast-\S|<\delta$ implies $s(\tilde y_\ast)-s(\S)<\varepsilon$, which is equivalent to upper semi-continuity. If $s(\S)=0$ this implies the continuity of $s(\cdot)$ at $\S$. 

\noindent
{\em Proof of part (b).}
Since $z$ is a fixed distance away from the sonic time $s(y_\ast)$, there exists a constant $\tilde\eta>0$ such that $1-\tau^2y_\ast^2\omega(\tau;y_\ast)^2>\tilde\eta$ for all $\tau\in[z,1-r]$. 
By the proof of part (a) there exists a neighbourhood of $y_\ast$ depending on $\tilde\eta$ and $z$, such that all LP- type solutions launched from that neighbourhood have a sonic time strictly less than $s(y_\ast)$. The claim now follows from~\eqref{E:GBOUND}.

\noindent
{\em Proof of part (c).} This is again a consequence of the arguments in the proof of part (a). By Lemma~\ref{L:TQUANT} it is clear that there exists a $T=T(\eta,Z)$ such that  $s(y_\ast^n)<Z-T$ for all $n\in\mathbb N$. On the other hand, due to the lower bound~\eqref{E:LOWERBOUNDN} and the proof of part (a) there exists a $\delta=\delta(\eta,Z)$ such that for all $|\tilde y_\ast-y_\ast^n|<\delta$ the sonic time $s(\tilde y_\ast)<Z-T$ for some, possibly smaller time $T=T(\eta,Z)>0$. Letting $n$ large enough, this concludes the proof.
\end{proof}

\subsection{The set $Y$ and the minimality property}\label{SS:SETY}

We now partition the interval $[2,3]$  in the the sets $\X,\Y,\Z$ that will play an important role in our analysis.
We let
\begin{align}\label{E:XDEF}
\X : = \left\{y_\ast\in[2,3]\,\Big| \inf_{z\in(s(y_\ast),1)}\omega(z;y_\ast)>\frac13 \right\},
\end{align}
\begin{align}\label{E:YDEF}
\Y : = \left\{y_\ast\in[2,3]\,\big| \ \exists z\in (s(y_\ast), 1) \ \text{such that } \omega(z;y_\ast)=\frac13\right\}
\end{align}
\begin{align}\label{E:ZDEF}
\Z: = \left\{y_\ast\in[2,3]\,\Big| \omega(z;y_\ast)>\frac13 \ \text{ for all } \ z\in(s(y_\ast),1) \ \text{ and } \ \inf_{z\in(s(y_\ast),1)}\omega(z;y_\ast)\le\frac13\right\}.
\end{align}
Clearly
$
[2,3] = \X \cup \Y \cup \Z
$
and the sets $\X,\Y,\Z$ are disjoint. We introduce the fundamental set $Y\subset \Y$
\begin{align}\label{E:YDEFA}
Y : = \left\{y_\ast\in[2,3]\,\big| \ \exists z\in (s(\tilde y_\ast), 1) \ \text{such that } \omega(z;\tilde y_\ast)=\frac13 \ \text{ for all } \ \tilde y_\ast\in[y_\ast,3]\right\},
\end{align}
and let
\begin{align}
\bar y_\ast &: = \inf_{y_\ast\in Y} y_\ast.\label{E:BARYDEF} 
\end{align}


The next statement shows that sets $\Y$ and $\X$ are not empty.
\begin{lemma}\label{L:TWOANDTHREE}
\begin{enumerate}
\item[{\em (a)}] There exists an $\epsilon>0$ such that  $(3-\epsilon,3]\subset Y\subset \Y$
\item[{\em (b)}] $2\in \X$.
\end{enumerate}
\end{lemma}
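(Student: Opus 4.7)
Part (a) rests on the sonic-point expansion at $y_\ast=3$ combined with the flow-continuity result of Proposition~\ref{P:LOWERSEMICONT}(b). At $y_\ast=3$, the Larson--Penston relations~\eqref{E:LPCONDITION} give $\omega_0=\omega_1=\tfrac13$, so within the radius of convergence from Theorem~\ref{T:ANALYTICITY} we have, for all sufficiently small $h>0$,
\[
\omega(1-h;\,3) \;=\; \tfrac13 - \tfrac{h}{3} + O(h^2) \;<\; \tfrac13 .
\]
Fix such a $z_0=1-h\in(1-r,1)$; since $s(\tilde y_\ast)\le 1-r$ for every $\tilde y_\ast\in[2,3]$, one automatically has $z_0>s(\tilde y_\ast)$. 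Proposition~\ref{P:LOWERSEMICONT}(b) then yields $\omega(z_0;\tilde y_\ast)\to \omega(z_0;3)<\tfrac13$ as $\tilde y_\ast\to 3$, so there exists $\epsilon>0$ with $\omega(z_0;\tilde y_\ast)<\tfrac13$ for all $\tilde y_\ast\in(3-\epsilon,3]$. For $\tilde y_\ast\in(3-\epsilon,3)$ one has the strict inequality $\omega(1;\tilde y_\ast)=1/\tilde y_\ast>\tfrac13$, and the intermediate value theorem on $[z_0,1]$ produces $z\in(z_0,1)\subset(s(\tilde y_\ast),1)$ with $\omega(z;\tilde y_\ast)=\tfrac13$, placing $\tilde y_\ast$ in $\Y$. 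For the endpoint $\tilde y_\ast=3$ one uses the partition $[2,3]=\X\cup\Y\cup\Z$: membership in either $\X$ or $\Z$ would force $\omega(\cdot;3)>\tfrac13$ pointwise on $(s(3),1)$, contradicting $\omega(z_0;3)<\tfrac13$; hence $3\in\Y$ as well. Thus $(3-\epsilon,3]\subset\Y$, and the nested requirement in the definition of $Y$ gives $(3-\epsilon,3]\subset Y$.

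For part (b) the plan is to prove the stronger statement $\omega(z;2)>\tfrac12$ on $(s(2),1)$ by a barrier argument. The sonic expansion~\eqref{E:LPCONDITION} at $y_\ast=2$ yields $\omega_0=\tfrac12$, $\omega_1=0$, and $\omega_2=\tfrac{-20+38-17}{2\cdot 2\cdot 1}=\tfrac14>0$, so
\[
\omega(z;2) \;=\; \tfrac12 + \tfrac14 (z-1)^2 + O((z-1)^3) \;>\; \tfrac12
\]
in a left-neighbourhood of $z=1$. Suppose for contradiction there exists $z_1\in(s(2),1)$ with $\omega(z_1;2)=\tfrac12$ and take $z_1$ to be the largest such point; then $\omega>\tfrac12$ on $(z_1,1]$, which forces $\omega'(z_1)\ge 0$. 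Evaluating~\eqref{E:OMEGAEQN} at $(z_1,\tfrac12,\rho(z_1))$ with $y_\ast=2$ gives
\[
\omega'(z_1) \;=\; -\frac{1}{2z_1} + \frac{2z_1}{1-z_1^2}\left(\rho(z_1)-\tfrac12\right),
\]
so $\omega'(z_1)\ge 0$ forces $\rho(z_1)\ge \frac{1+z_1^2}{4z_1^2}$. However, Lemma~\ref{L:APRIORIODE} supplies $\rho(z_1)<\frac{1}{2z_1}$, and the elementary identity
\[
\frac{1+z_1^2}{4z_1^2} - \frac{1}{2z_1} \;=\; \frac{(1-z_1)^2}{4z_1^2} \;>\; 0 \quad\text{for } z_1<1
\]
yields the contradiction $\rho(z_1)<\frac{1+z_1^2}{4z_1^2}$. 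Hence $\omega(z;2)>\tfrac12>\tfrac13$ on $(s(2),1)$, i.e.\ $2\in\X$.

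The main obstacle is identifying the correct threshold in part (b). Attempting the same barrier argument directly at $\omega=\tfrac13$ is inconclusive because the bound $\rho<\frac{1}{y_\ast z}$ from Lemma~\ref{L:APRIORIODE} does not pin down the sign of $\rho-\tfrac13$ at a hypothetical touching point. Raising the threshold to the sonic-point value $\omega=\tfrac12$ at $y_\ast=2$ is essential: the sign $\omega_2=\tfrac14>0$ supplies the strict local inequality immediately to the left of $z=1$, and the sharp constant $1/(y_\ast z)$ of Lemma~\ref{L:APRIORIODE} then beats the required obstruction by exactly the perfect-square margin $(1-z_1)^2/(4z_1^2)$.
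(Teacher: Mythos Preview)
Your proof is correct and follows essentially the same approach as the paper's. One minor technical point in part~(a): Proposition~\ref{P:LOWERSEMICONT}(b) is stated for $z<1-r$, whereas your chosen $z_0\in(1-r,1)$; the continuity of $\tilde y_\ast\mapsto\omega(z_0;\tilde y_\ast)$ you invoke here follows instead directly from Theorem~\ref{T:ANALYTICITY} and Lemma~\ref{L:ANALYTIC} (the $C^1$-dependence of the local analytic solution on $\omega_0=1/y_\ast$), which is exactly the reference the paper uses. In part~(b), your direct computation at the hypothetical touching point is equivalent to the paper's algebraic rewriting $z\omega'=1-2\omega-\omega\cdot[\,\cdot\,]$ with $[\,\cdot\,]\ge 0$; both arguments rest on the same key bound $\rho<\tfrac{1}{y_\ast z}$ from Lemma~\ref{L:APRIORIODE} and arrive at the same conclusion $\omega(\cdot;2)>\tfrac12$.
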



\begin{proof}
{\em Proof of part (a).} By the mean value theorem, we write $\omega(z;y_\ast)$ as 
\[
\omega(z;y_\ast) = \frac{1}{y_\ast} + \omega'(\bar z; y_\ast) (z-1), \ z\in (s(y_\ast), 1) 
\] 
for some $\bar z\in (z,1)$.  From \eqref{E:LPCONDITION}, we have $ \omega'(1; y_\ast) =1-\frac{2}{y_\ast} $ with  $ \omega'(1; 3) =\frac13 $. By Theorem \ref{T:ANALYTICITY} and Lemma \ref{L:ANALYTIC}, there exist small enough $r>0$ and $\epsilon_1>0$ such that $\omega'(z; y_\ast)>\frac16$ for all $z\in(1-r,1]$ and $y_\ast\in (3-\epsilon_1,3]$. Then for $z\in(1-r,1]$ and $y_\ast\in (3-\epsilon_1,3]$, we have 
\[
\omega(z;y_\ast) \le \frac{1}{y_\ast} +\frac16 (z-1). 
\]
Note $ \frac{1}{y_\ast} + \frac16 (z-1) = \frac 13$ when $z=z^\ast(y_\ast)= 1- \frac{2(3-y_\ast)}{y_\ast}$. Therefore for all $y_\ast \in (3-\epsilon, 3]$ with 
$\epsilon = \min \{ \epsilon_1, \frac{3r}{2+r}\}$, there exists $\tilde z\ge z^\ast(y_*)$ such that $\omega(\tilde z;y_\ast)=\frac13$, which shows $(3-\epsilon,3]\subset Y\subset \Y$.

\noindent{\em Proof of part (b).} Let $y_\ast=2$ and denote $\omega(\cdot;2)$ by $\omega$. First we rewrite  \eqref{E:OMEGAEQN} as 
\begin{align*}
z\omega' & = 1-2\omega -\omega + \frac{2\S^2z^2\omega^2}{1-\S^2z^2\omega^2}(\rho-\omega) \\
&= 1- 2\omega - \omega \left[ \frac{1- (\S z\rho)^2 + (\S z \rho -\S z \omega)^2 }{1-\S^2z^2\omega^2}\right]. 
\end{align*}
By Lemma \ref{L:APRIORIODE}, we have $z\omega' \leq 1-2\omega$, which implies that $\omega>\frac12$ is an invariant set. On the other hand from \eqref{E:LPCONDITION} we know that 
\[
\omega(1)=\frac12, \ \ \  \omega'(1)=0,  \ \ \ \omega''(1)  = \frac{1}{2} 
\]
and hence $\omega >\frac12$ on $(1-\eta, 1)$ for sufficiently small $\eta>0$. Therefore, we conclude that $\inf_{z\in(s(2),1)}\omega(z;2)=\frac12$ and $2\in \X$.
\end{proof}


\begin{definition}
For any $y_\ast>0$ we define
\begin{align}
z_{\frac13} &  = z_{\frac13}(y_\ast) : = \inf \left\{z \in (s(y_\ast), 1) \, \big| \ 
\ \omega(\tau;y_\ast)>\frac13 \ \text{ for } \ \tau \in(z,1) \right\}. \label{E:YTHIRDDEFALTZ}
\end{align}
\end{definition}

\begin{remark}\label{R:SETY}
Geometrically, if we follow the solution curve $z\mapsto \omega(z;y_\ast)$ starting at $z=1$ and going to the left, point $z_{\frac13}$ is the first time this curve crosses the value $\frac13$ from above.
By the definition of $\Y$, for any $y_\ast\in \Y$ there exists an $z_{\frac13}\in(s(y_\ast), 1]$ such that 
\begin{align}
\omega(z_{\frac13};y_\ast) = \frac13. \notag 
\end{align}

Therefore, for any  $y_\ast\in[2,3]$ we have the following possibilities:
\begin{enumerate}
\item 
$y_\ast\in \Y$ and therefore $z_{\frac13}(y_\ast)>s(y_\ast)\ge0$.
\item
$y_\ast\in [2,3]\setminus \Y$ and $z_{\frac13}(y_\ast)=s(y_\ast)>0$.  In this case we must have 
\be\label{E:WLOWERBOUNDZ}
\omega(z_{\frac13}(y_\ast);y_\ast)=\omega(s(y_\ast);y_\ast)>\frac13;
\ee
otherwise if $\omega(z_{\frac13}(y_\ast);y_\ast)=\frac13$ then $\omega(z_{\frac13}(y_\ast);y_\ast) z_{\frac13}(y_\ast)=\frac13 z_{\frac13}(y_\ast)<1$ and thus $s(y_\ast)<z_{\frac13}(y_\ast)$.
\item 
$y_\ast\in [2,3]\setminus \Y$ and $z_{\frac13}(y_\ast)=s(y_\ast)=0$.
\end{enumerate}
\end{remark}


The sets $\Y$ and $\Z$ enjoy several important properties which we prove in the next lemma.
\begin{lemma}\label{L:PREPZ}
\begin{enumerate}
\item[{\em (a)}] For any $y_\ast\in \Y\cup \Z$ we have 
\begin{align}
\omega(z;y_\ast)&<\rho(z;y_\ast), \ \ z\in (s(y_\ast), 1), \label{E:WLESSTHANRZ}\\
\omega(z;y_\ast) & <\frac13, \ \ z\in (s(y_\ast), z_{\frac13}(y_\ast)),
\label{E:WLESSTHANTHIRDZ}
\end{align}
where~\eqref{E:WLESSTHANTHIRDZ} is considered trivially true in the case $s(y_\ast)=z_{\frac13}(y_\ast)$.
\item[{\em (b)}]
For any $y_\ast\in \Y$ we have $\omega'(z_{\frac13}(y_\ast);y_\ast)>0$. Moreover, the set $\Y$ is relatively open in $[2,3]$. 
\end{enumerate}
\end{lemma}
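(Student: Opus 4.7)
The central claim is part (a)'s first assertion, $\rho > \omega$ on $(s(y_\ast), 1)$; everything else follows easily. I will argue by contradiction. The LP data \eqref{E:LPCONDITION} give $(\rho-\omega)(1)=0$ and $(\rho-\omega)'(1) = 1/y_\ast - 1 < 0$, so $\rho > \omega$ just left of $z=1$. If the claim fails, then $Z_\ast := \sup\{z \in (s(y_\ast),1) : \rho(z) \leq \omega(z)\}$ lies in $(s(y_\ast),1)$ with $\rho(Z_\ast) = \omega(Z_\ast) =: \alpha > 0$, and subtracting \eqref{E:RHOEQN} from \eqref{E:OMEGAEQN} at $Z_\ast$ gives $(\rho-\omega)'(Z_\ast) = (3\alpha-1)/Z_\ast$. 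Since $\rho-\omega > 0$ on $(Z_\ast,1)$ and vanishes at $Z_\ast$, we need $(\rho-\omega)'(Z_\ast) \geq 0$, forcing $\alpha \geq 1/3$.

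If $\alpha = 1/3$, then $(\rho,\omega)(Z_\ast) = (1/3, 1/3)$ at a subsonic point (since $y_\ast Z_\ast < 3$), and local ODE uniqueness forces the LP-type solution to coincide with the Friedman solution $\rho \equiv \omega \equiv 1/3$ of Remark~\ref{R:FRIEDMAN} near $Z_\ast$, contradicting $\rho > \omega$ on $(Z_\ast, 1)$. If $\alpha > 1/3$, then $(\rho-\omega)'(Z_\ast) > 0$, so $\omega > \rho$ strictly just left of $Z_\ast$. The region $\{\omega > \rho > 0,\ \omega > 1/3\}$ is invariant under the leftward flow: there both summands of \eqref{E:OMEGAEQN} are negative so $\omega' < 0$, while \eqref{E:RHOEQN} gives $\rho' > 0$; hence as $z$ decreases, $\omega$ strictly increases, $\rho$ strictly decreases, $\omega - \rho$ strictly increases, and all three defining conditions are preserved (using also $\rho > 0$ from Lemma~\ref{L:APRIORIODE}). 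Consequently $\omega > \alpha > 1/3$ on all of $(s(y_\ast), Z_\ast]$.

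I then close the contradiction using membership in $\Y \cup \Z$. For $y_\ast \in \Y$: since $\omega > 1/3$ on $(s(y_\ast), Z_\ast]$, the point $z_{\frac13}(y_\ast)$ lies in $(Z_\ast, 1)$, where $\rho > \omega$ by the definition of $Z_\ast$; but at any $z \in (Z_\ast, 1)$ with $\omega(z) = 1/3$, \eqref{E:OMEGAEQN} reduces to $\omega'(z) = \frac{(2/9) y_\ast^2 z}{1 - y_\ast^2 z^2/9}(\rho(z)-\tfrac13) > 0$, ruling out the downward transition from $\omega(Z_\ast) = \alpha > 1/3$ that would be required to reach $\omega(z_{\frac13}) = 1/3$. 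For $y_\ast \in \Z$: compactness of $[Z_\ast, 1]$ combined with $\omega(Z_\ast) = \alpha > 1/3$, $\omega(1) = 1/y_\ast > 1/3$ (using $y_\ast < 3$ here, since $y_\ast = 3 \in \Y$ by Lemma~\ref{L:TWOANDTHREE}), and $\omega > 1/3$ on the interior forces $\min_{[Z_\ast,1]}\omega > 1/3$; combined with $\omega > \alpha$ on $(s(y_\ast), Z_\ast)$ this contradicts $\inf_{(s(y_\ast),1)}\omega = 1/3$.

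The remaining statements are quick corollaries. Inserting $\omega(z_{\frac13}) = 1/3$ and the now-established $\rho(z_{\frac13}) > 1/3$ into \eqref{E:OMEGAEQN} gives part (b)'s first claim $\omega'(z_{\frac13}) > 0$, which yields $\omega < 1/3$ just left of $z_{\frac13}$; any hypothetical $z' \in (s(y_\ast), z_{\frac13})$ with $\omega(z') = 1/3$ would again satisfy $\omega'(z') > 0$ by the same formula, preventing the required leftward transition back up to $1/3$, so $\omega < 1/3$ on $(s(y_\ast), z_{\frac13})$, which is part (a)'s second claim (vacuous when $y_\ast \in \Z$ since then $z_{\frac13} = s(y_\ast)$). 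For openness of $\Y$, I will combine the upper semi-continuity of $s(\cdot)$ from Proposition~\ref{P:LOWERSEMICONT}(a) (giving $s(\tilde y_\ast) < z_{\frac13}(y_\ast) - \delta$ for $\tilde y_\ast$ near $y_\ast$ and small $\delta > 0$) with the continuity of the flow at fixed $z > s(y_\ast)$ from Proposition~\ref{P:LOWERSEMICONT}(b); applying the Intermediate Value Theorem to $\omega(\cdot;\tilde y_\ast)$ on $(z_{\frac13}(y_\ast)-\delta, z_{\frac13}(y_\ast)+\delta)$, where the $y_\ast$-flow already crosses $\frac13$ with positive slope, yields some $\tilde z$ with $\omega(\tilde z;\tilde y_\ast) = 1/3$ and $\tilde z > s(\tilde y_\ast)$, whence $\tilde y_\ast \in \Y$. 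The principal obstacle is the $\alpha > 1/3$ step: one must simultaneously propagate the invariant region $\{\omega > \rho,\ \omega > 1/3\}$ leftward from the hypothesised crossing and reconcile this with the global $\Y/\Z$ topology on its right.
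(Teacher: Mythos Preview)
Your proof is correct and follows essentially the same contradiction strategy as the paper: both locate the rightmost crossing of $\rho=\omega$, both use the leftward-invariant region $\{\omega>\rho,\ \omega>\tfrac13\}$ to trap $\omega$ strictly above $\tfrac13$, and both close part~(b) and \eqref{E:WLESSTHANTHIRDZ} via the sign of $\omega'$ at any subsonic point with $\omega=\tfrac13$, $\rho>\tfrac13$. The organization differs only cosmetically: the paper splits into three cases by the position of the crossing relative to $z_{\frac13}$, whereas you split first by whether $\alpha=\tfrac13$ or $\alpha>\tfrac13$ and then by $\Y$ versus $\Z$. One small genuine difference is your treatment of the borderline $\alpha=\tfrac13$: you invoke uniqueness against the Friedman solution $(\tfrac13,\tfrac13)$, while the paper instead observes that $\rho'<0$ on $(z_c,1)$ forces $\rho(z_c)>\rho(1)=\tfrac{1}{y_\ast}\ge\tfrac13$, contradicting $\rho(z_c)=\tfrac13$. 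Both are equally short; your version is perhaps cleaner since it avoids tracking $\rho'$ but uses the explicit Friedman branch already recorded in Remark~\ref{R:FRIEDMAN}.
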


\begin{proof}
{\em Proof of~\eqref{E:WLESSTHANRZ}.}
Let $y_\ast\in \Y\cup \Z$.
By~\eqref{E:LPCONDITION} we know that $\omega(z)<\rho(z)$ for all $z\in[1-\bar r,1)$ for some $\bar r\le r$, where $r$ is given by Theorem~\ref{T:ANALYTICITY}. By way of contradiction, assume that there exists $z_c\in (s(y_\ast), 1)$ such that 
\begin{align}\label{E:YCDEFZ}
\omega(z_c) = \rho(z_c), \ \ \rho(z)>\omega(z), \ z\in(z_c,1).
\end{align}
We distinguish three cases.

\noindent
{\em Case 1: $z_c\in(z_{\frac13},1)$.}
In this case we conclude from~\eqref{E:OMEGAEQN} that 
$\omega'(z_c)<0$ by~\eqref{E:YCDEFZ} and~\eqref{E:YTHIRDDEFALTZ} and from~\eqref{E:RHOEQN} and~\eqref{E:YCDEFZ} $\rho'(z_c)=0$. In particular
$(\rho-\omega)'(z_c)>0$ and locally strictly to the left of $z_c$ we have 
\begin{align}\label{E:CONTR1Z}
\omega'<0, \ \rho-\omega <0, \ \rho'>0, \ \omega>\frac13.
\end{align}
We note that $\rho'>0$ follows from $\rho-\omega<0$ and~\eqref{E:RHOEQN}, while $\omega>\frac13$ is implied by the assumption $z_c\in(z_{\frac13},1)$. It is easy to see that the 
conditions~\eqref{E:CONTR1Z} are dynamically trapped, and since $\omega'<0$ we conclude that $\omega$ stays strictly bounded away from $\frac13$ from above for all 
$z\in (z_{\frac13},1)$. This is a contradiction to the assumption $y_\ast\in \Y\cup \Z$.

\noindent
{\em Case 2: $z_c=z_{\frac13}$.}
In this case $y_\ast\in \Y$ necessarily and 
\begin{align}\label{E:CONTR2Z}
\omega(z_{\frac13}) = \rho(z_{\frac13}) = \frac13.
\end{align}
However, since $\rho-\omega>0$ for $z\in (z_{\frac13},1)$ equation~\eqref{E:RHOEQN} implies $\rho'<0$ on $(z_{\frac13},1)$ and therefore $\rho(z_{\frac13})>\rho(1)=\frac1{y_\ast} \ge\frac13$, since $y_\ast [2,3]$. This is a contradiction to~\eqref{E:CONTR2Z}.

\noindent
{\em Case 3: $z_c \in(s(y_\ast),z_{\frac13})$.} In this case $y_\ast\in \Y$ necessarily.
Since $z_c<z_{\frac13}$ we know that $\rho-\omega>0$ locally around $z_{\frac13}$. Therefore, by~\eqref{E:OMEGAEQN}--\eqref{E:RHOEQN} and~\eqref{E:YTHIRDDEFALTZ} we have 
\begin{align}\label{E:CONTR3Z}
\omega'>0, \ \rho-\omega>0, \ \omega<\frac13 \ \ \text{ on } \ (z_{\frac13}-\varepsilon, z_{\frac13})
\end{align}
for a sufficiently small $\varepsilon>0$. The region described by~\eqref{E:CONTR3Z} is dynamically trapped  and we conclude that $\rho-\omega>0$ on $(s(y_\ast),z_{\frac13})$. This is a contradiction, thus completing the proof of~\eqref{E:WLESSTHANRZ}. Inequality~\eqref{E:WLESSTHANTHIRDZ} follows by a similar argument, since the property~\eqref{E:CONTR3Z} is dynamically preserved and all three properties are easily checked to hold locally to the left of $z_{\frac13}(y_\ast)$.

\noindent
{\em Proof of part (b).}
For any $y_\ast\in \Y$ by part (a)
and~\eqref{E:OMEGAEQN} we have $\omega'(z_{\frac13}(y_\ast);y_\ast)>0$. Therefore there exists a $\delta>0$ sufficiently small so that $\omega(z;y_\ast)<\frac13$ for all $z\in(z_{\frac13}(y_\ast)-\delta,z_{\frac13}(y_\ast))$. By the proof of Proposition~\ref{P:LOWERSEMICONT} there exists a small neighbourhood of $y_\ast$ such that $\omega(z;y_\ast)<\frac13$ for some $z\in (z_{\frac13}(y_\ast)-\delta,z_{\frac13}(y_\ast))$. Therefore $\Y$ is open.
\end{proof}


Another remarkable feature of the sets $\Y$ and $\Z$ is the following uniform lower bound on the distance to the sonic line at points $z$ larger than $z_{\frac13}(\S)$.

\begin{lemma}\label{L:LOWERBOUND}
There exists a constant $\eta>0$ such that 
\begin{align}
1 - y_\ast^2 z^2 \omega(z;y_\ast)^2 > \eta, \ \ y_\ast \in \Y\cup \Z, \ \ z\in (z_{\frac13}(y_\ast),1-r], \notag 
\end{align}
where $r$ is the constant given in Theorem~\ref{T:ANALYTICITY}.
\end{lemma}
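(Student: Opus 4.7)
The plan is to bootstrap from the single-variable quantity
\[
f(z;y_\ast) := 1 - y_\ast z\,\rho(z;y_\ast),
\]
already introduced in the proof of Lemma~\ref{L:APRIORIODE}. Recall that that proof established two pivotal facts on $(s(y_\ast),1)$: the strict positivity $f(z;y_\ast)>0$, and the strict monotonicity $f'(z;y_\ast)<0$ (which is precisely $(z\rho)'>0$). Monotonicity gives
\[
f(z;y_\ast)\ge f(1-r;y_\ast) \qquad \text{for every } z\in (s(y_\ast),1-r],
\]
so the whole game is to bound $f(1-r;y_\ast)$ away from zero uniformly in $y_\ast$.

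The first step would be to prove $\eta_0:=\inf_{y_\ast\in[2,3]}f(1-r;y_\ast)>0$. Since $s(y_\ast)<1-r$ for every $y_\ast\in[2,3]$ (the solution is analytic on $|z-1|<r$ and strictly below the sonic line there), Lemma~\ref{L:APRIORIODE} yields the pointwise bound $f(1-r;y_\ast)>0$. Continuity of $y_\ast\mapsto \rho(1-r;y_\ast)$ follows from Theorem~\ref{T:ANALYTICITY} and Lemma~\ref{L:ANALYTIC} (or equivalently, from standard ODE continuous dependence on parameters, since $1-r$ is a fixed positive distance from the sonic line for all $y_\ast\in[2,3]$). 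Compactness of $[2,3]$ then turns pointwise positivity into a uniform positive infimum $\eta_0$, and the monotonicity of $f$ propagates this to
\[
f(z;y_\ast)\ge \eta_0 \quad \text{for all } y_\ast\in[2,3],\ z\in (s(y_\ast),1-r].
\]
Restricting now to $y_\ast\in \Y\cup\Z$ I would invoke Lemma~\ref{L:PREPZ}(a), which says $\omega<\rho$ on $(s(y_\ast),1)$; this immediately converts the bound on $f$ into a bound on the $\omega$-version:
\[
1 - y_\ast z\,\omega(z;y_\ast) \;\ge\; 1 - y_\ast z\,\rho(z;y_\ast) \;=\; f(z;y_\ast) \;\ge\; \eta_0.
\]
Finally, on the sub-interval $(z_{\frac13}(y_\ast),1-r]$ the definition of $z_{\frac13}$ gives $\omega>\tfrac13>0$, hence $1+y_\ast z\omega>1$, and factoring
\[
1 - y_\ast^2 z^2 \omega^2 = (1 - y_\ast z\omega)(1 + y_\ast z\omega) \ge \eta_0
\]
closes the proof with $\eta:=\eta_0$.

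The only mildly non-routine step is the uniformity $f(1-r;y_\ast)\ge \eta_0$, which leans on the uniform-in-$y_\ast$ analytic (or at least $C^1$) dependence already guaranteed by Theorem~\ref{T:ANALYTICITY} and Lemma~\ref{L:ANALYTIC}. Everything else is a short algebraic chain: monotonicity of $f$ propagates the bound inward from $z=1-r$, the inequality $\omega<\rho$ valid for $y_\ast\in\Y\cup\Z$ transfers it from $\rho$ to $\omega$, and the sign condition $\omega>\tfrac13$ on $(z_{\frac13}(y_\ast),1-r]$ renders the second factor in the sonic factorisation harmlessly larger than $1$.
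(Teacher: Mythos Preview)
Your proof is correct and follows essentially the same path as the paper's: both arguments use the monotonicity $(z\rho)'>0$ from Lemma~\ref{L:APRIORIODE} to reduce to a uniform bound at $z=1-r$ (obtained by continuity in $y_\ast$ and compactness of $[2,3]$), and then invoke $\omega<\rho$ from Lemma~\ref{L:PREPZ}(a) to transfer the bound to $\omega$. The only cosmetic difference is that you track the linear quantity $1-y_\ast z\rho$ and factorise $1-y_\ast^2z^2\omega^2$ at the end, whereas the paper carries the squared quantity $1-y_\ast^2z^2\rho^2$ throughout.
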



\begin{proof}
It is clear that there exists an $\eta>0$ such that $1 - y_\ast^2 z^2 \omega(z;y_\ast)^2 > \eta$ at $z=1-r$ for all $y_\ast\in[2,3]$. By~\eqref{E:WLESSTHANRZ} and~\eqref{E:ZRHOMONOTONE}  and since $\omega(z;\bar y_\ast)>\frac13$ for all $z\in (z_{\frac13}(y_\ast),1-r]$,  we have
\begin{align}
1 - y_\ast^2 z^2\omega(z;y_\ast)^2 > 1 - y_\ast^2 z^2\rho(z;y_\ast)^2> 1-  y_\ast^2 (1-r)^2\rho(1(1-r);y_\ast)^2>\eta, \  \ z\in (z_{\frac13}(y_\ast),1-r], \notag 
\end{align}
for all $y_\ast\in \Y\cup \Z$.
\end{proof}

\begin{remark}
It is easily seen from the proof that the uniform-in-$y_\ast$ lower bound from Lemma~\ref{L:LOWERBOUND} holds as long as $\omega(z;y_\ast)\ge0$.
\end{remark}



We now use the previously shown regularity properties to obtain the key result, which states that $s(\bar y_\ast)=0$, i.e.
the LP-type solution associated with $\bar y_\ast=\inf Y$ extends to the left from $z=1$ all the way to $z=0$.

\begin{proposition}[Existence up to the origin]\label{P:EXISTENCEY}
Recall $\bar y_\ast$ defined in~\eqref{E:BARYDEF}. The solution $(\omega(z;\bar y_\ast), \rho(z;\bar y_\ast))$ exists on $(0,1]$, i.e. 
\begin{align}
s(\bar y_\ast)=0. \notag 
\end{align}
\end{proposition}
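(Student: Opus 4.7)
The plan is proof by contradiction: assume $s(\bar y_\ast) > 0$, then combine the uniform sonic-gap estimate of Lemma~\ref{L:LOWERBOUND} for elements of $Y$ with the no-blow-up lemma applied to $\bar y_\ast$ to reach a contradiction. First I would show $\bar y_\ast \in \X \cup \Z$. By Lemma~\ref{L:TWOANDTHREE}, $\bar y_\ast \in (2, 3)$ and there is a sequence $\{y_\ast^n\} \subset Y$ with $y_\ast^n \searrow \bar y_\ast$; since each $[y_\ast^n, 3] \subset \Y$, taking unions gives $(\bar y_\ast, 3] \subset \Y$. If in addition $\bar y_\ast \in \Y$, the openness of $\Y$ (Lemma~\ref{L:PREPZ}(b)) would place a full two-sided neighbourhood of $\bar y_\ast$ inside $\Y$, hence $[\bar y_\ast - \delta, 3] \subset \Y$, making $\bar y_\ast - \delta/2 \in Y$ and contradicting $\bar y_\ast = \inf Y$. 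Thus $\bar y_\ast \in \X \cup \Z$, so
\[
\omega(z; \bar y_\ast) > \frac13 \qquad \text{for every } z \in (s(\bar y_\ast), 1).
\]

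Next I would fix an arbitrary $z_0 \in (s(\bar y_\ast), 1-r)$. By upper semi-continuity of the sonic time (Proposition~\ref{P:LOWERSEMICONT}(a)), $s(y_\ast^n) < z_0$ for $n$ large, so after discarding finitely many early indices the hypothesis of Proposition~\ref{P:LOWERSEMICONT}(b) is met and one obtains $\omega(z_0; y_\ast^n) \to \omega(z_0; \bar y_\ast) > \frac13$; in particular $\omega(z_0; y_\ast^n) > \frac13$ for $n$ large. Since each $y_\ast^n \in Y \subset \Y$ satisfies $\omega(\cdot; y_\ast^n) > \frac13$ on $(z_{\frac13}(y_\ast^n), 1)$ and $\omega(\cdot; y_\ast^n) < \frac13$ on $(s(y_\ast^n), z_{\frac13}(y_\ast^n))$ by Lemma~\ref{L:PREPZ}(a), the pointwise inequality $\omega(z_0; y_\ast^n) > \frac13$ is exactly the statement $z_0 > z_{\frac13}(y_\ast^n)$. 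Therefore $z_0 \in (z_{\frac13}(y_\ast^n), 1-r]$, and Lemma~\ref{L:LOWERBOUND} supplies a uniform $\eta > 0$ with $1 - (y_\ast^n)^2 z_0^2 \omega(z_0; y_\ast^n)^2 > \eta$. Passing to the limit $n \to \infty$ at the fixed $z_0$ produces
\[
1 - \bar y_\ast^2 z_0^2 \omega(z_0; \bar y_\ast)^2 \ge \eta \qquad \text{for every } z_0 \in (s(\bar y_\ast), 1-r).
\]

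Finally I would let $z_0 \to s(\bar y_\ast)^+$ and invoke the ``no blow up before the sonic line'' lemma immediately preceding Subsection~\ref{SS:SC}, which gives $\omega(z_0; \bar y_\ast)^2 \to 1/(\bar y_\ast^2 s(\bar y_\ast)^2)$, so that the left-hand side of the display tends to $0$, contradicting the strict positive lower bound $\eta$. Hence $s(\bar y_\ast) = 0$. The main technical subtlety will be correctly invoking Proposition~\ref{P:LOWERSEMICONT}(b), whose hypothesis requires $z_0 > \sup_n s(y_\ast^n)$ rather than only $\limsup$; this is handled by truncating $\{y_\ast^n\}$ to a tail on which USC forces $s(y_\ast^n) < z_0$ uniformly, after which the uniformity of Lemma~\ref{L:LOWERBOUND} across $Y$ does all the work.
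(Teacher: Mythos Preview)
Your argument is correct and in fact more streamlined than the paper's. The paper splits into three cases according to whether $z_{\frac13}(\bar y_\ast)=0$, $z_{\frac13}(\bar y_\ast)>s(\bar y_\ast)>0$, or $z_{\frac13}(\bar y_\ast)=s(\bar y_\ast)>0$; in the last case it takes a sequence $y_\ast^n\in Y$, defines $\bar z_{\frac13}=\limsup z_{\frac13}(y_\ast^n)$, and further splits into two subcases, invoking Proposition~\ref{P:LOWERSEMICONT}(c) to push $s(\bar y_\ast)$ below $\bar z_{\frac13}$ and then obtain $\omega(z;\bar y_\ast)\le\frac13$ somewhere. You bypass all of this: the openness of $\Y$ (Lemma~\ref{L:PREPZ}(b)) immediately forces $\bar y_\ast\in\X\cup\Z$, collapsing the paper's Case~2; and then, rather than tracking $z_{\frac13}(y_\ast^n)$ and using part~(c), you transfer the uniform sonic gap of Lemma~\ref{L:LOWERBOUND} from the $y_\ast^n$ to $\bar y_\ast$ pointwise via part~(b) and conclude directly with the no-blow-up lemma. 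What you gain is a single clean contradiction in place of a nested case analysis, and you never need Proposition~\ref{P:LOWERSEMICONT}(c).

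One minor imprecision: you write $\bar y_\ast\in(2,3)$, but at this stage only $\bar y_\ast\in[2,3)$ is known, and your openness argument requires a genuine left neighbourhood in $[2,3]$. This is harmless, since if $\bar y_\ast=2$ then Lemma~\ref{L:TWOANDTHREE}(b) gives $2\in\X$ directly, so $\bar y_\ast\in\X\cup\Z$ holds either way.
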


\begin{proof}

\noindent
{\em Case 1: $z_{\frac13}(\bar y_\ast)=0$.} 
In this case we are done as by definition $0\le s(\bar y_\ast)\le z_{\frac13}(\bar y_\ast)=0$.

\smallskip
\noindent
{\em Case 2: $z_{\frac13}(\bar y_\ast)>s(\bar y_\ast)>0$.}  In this case $\bar y_\ast\in Y$ and
\[
\omega(z_{\frac13}(\bar y_\ast);\bar y_\ast) = \frac13
\]
There exists a $\delta>0$ such that 
\be\label{E:WSMALLERTHIRDZ}
\omega(z;\bar y_\ast)<\frac13, \ \  z\in (z_{\frac13}(\bar y_\ast)-\delta,z_{\frac13}(\bar y_\ast)),
\ee 
by Lemma~\ref{L:PREPZ}.
Let now $\{y^n_\ast\}_{n\in\mathbb N}\subset [2,3]\setminus Y$ satisfy 
\begin{align}
\lim_{n\to\infty} y_\ast^n = \bar y_\ast. \notag 
\end{align}
Note that we choose $\{y^n_\ast\}_{n\in\mathbb N}\subset \X\cup \Z$, which is possible by the openess of $Y$, $\Y$, and the definition of $\bar y_\ast$, see~\eqref{E:BARYDEF}.
Since by Proposition~\ref{P:LOWERSEMICONT} $s(\bar y_\ast)\ge\limsup_{n\to\infty}s(y_\ast^n)$ and $z_{\frac13}(\bar y_\ast)>s(\bar y_\ast)$ it follows that there exists $N\in\mathbb N$ sufficiently large such 
that (after passing to a subsequence) $s(y_\ast^n)<z_{\frac13}(\bar y_\ast)-\delta$ for all $n>N$, where we have chosen a possibly smaller $\delta$. By part (b) of Proposition~\ref{P:LOWERSEMICONT} it follows that for any $z\in(z_{\frac13}(\bar y_\ast)-\delta,z_{\frac13}(\bar y_\ast))$
$\omega(z;\bar y_\ast)=\lim_{n\to\infty}\omega(z;y_\ast^n) \ge\frac13$, a contradiction to~\eqref{E:WSMALLERTHIRDZ}.

\smallskip
\noindent
{\em Case 3: $z_{\frac13}(\bar y_\ast)=s(\bar y_\ast)>0$.} In this case $\bar y_\ast\in [2,3]\setminus Y$ and therefore, by definition of $\bar y_\ast$ we have $\bar y_\ast\in\X\cup\Z$. 
By~\eqref{E:WLOWERBOUNDZ} we have 
$\omega(s(\bar y_\ast);\bar y_\ast)>\frac13$. Let now $\{y^n_\ast\}_{n\in\mathbb N}\subset Y$ satisfy 
\begin{align}
\lim_{n\to\infty} y_\ast^n = \bar y_\ast. \notag 
\end{align} 
Define
\begin{align}
\bar z_{\frac13} : = \limsup_{n\to\infty} z_{\frac13}(y_\ast^n) \notag
\end{align}

We need to distinguish two subcases.

\noindent
{\em Subcase 1: $\bar z_{\frac13}>0$.}
By Lemma~\ref{L:PREPZ} and Remark~\ref{R:SETY} we have $s(y_\ast^n)<z_{\frac13}(y_\ast^n)$. By Lemma~\ref{L:LOWERBOUND} there exists a positive number $\eta$ such that 
\begin{align}
1 - z^2(y_\ast^n)^2\omega(z;y_\ast^n)^2 > \eta, \ \ n\in\mathbb N, \ \  z\in [z_{\frac13}(y_\ast^n), 1-r]. \notag 
\end{align}
Upon passing to a subsequence $\{y_\ast^n\}_{n\in\mathbb N}$ such that $\lim_{n\to\infty}z_{\frac13}(y_\ast^n)=\bar z_{\frac13}$, by part (c) of Proposition~\ref{P:LOWERSEMICONT}  we conclude that there exists a $T=T(\eta,\bar z_{\frac13})>0$ such that $s(\bar y_\ast), s(\bar y_\ast^n)<\bar z_{\frac13}-T$, $n\in\mathbb N$.
In particular, for any $z\in(\bar z_{\frac13}-T,\bar z_{\frac13})$ we conclude by part (b) of Proposition~\ref{P:LOWERSEMICONT} that $\omega(z;\bar y_\ast)=\lim_{n\to\infty}\omega(z;y_\ast^n)\le\frac13$, a contradiction to $\bar y_\ast \notin Y$.

\noindent
{\em Subcase 2: $\bar z_{\frac13}=0$.}
For any fixed $Z>0$ we can apply the argument from Subcase 1 to conclude that the $s(\bar y_\ast)<Z$. Therefore $s(\bar y_\ast)=0$ in this case.
\end{proof}


\begin{lemma}[Continuity of $\Y\ni y_\ast\mapsto z_{\frac13}(y_\ast)$]\label{L:ZONETHIRDCONTINUITY}
The map 
\[
\Y\ni y_\ast \mapsto z_{\frac13}(y_\ast) 
\]
is continuous and 
\be\label{E:ZONETHIRDCONT}
\lim_{Y\ni y\to \bar y_\ast} z_{\frac13}(y) = 0 = z_{\frac13}(\bar y_\ast).
\ee
\end{lemma}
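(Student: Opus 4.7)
The lemma has two parts: continuity of $y_\ast \mapsto z_{\frac13}(y_\ast)$ on $\Y$, and the one-sided limit~\eqref{E:ZONETHIRDCONT} at $\bar y_\ast = \inf Y$.

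For continuity at a fixed $y_\ast \in \Y$, write $z_0 := z_{\frac13}(y_\ast) \in (s(y_\ast),1)$. By Remark~\ref{R:SETY} one has $\omega(z_0;y_\ast)=\tfrac{1}{3}$, Lemma~\ref{L:PREPZ}(b) yields the transversality $\omega'(z_0;y_\ast)>0$, and Lemma~\ref{L:PREPZ}(a) gives $\omega(\cdot;y_\ast)<\tfrac13$ on $(s(y_\ast),z_0)$ and (by definition of $z_{\frac13}$) $\omega(\cdot;y_\ast)>\tfrac13$ on $(z_0,1)$. For small $\varepsilon>0$ I would therefore have strict separation $\omega(z_0-\varepsilon;y_\ast)<\tfrac13<\omega(z_0+\varepsilon;y_\ast)$, along with a uniform lower bound $\omega(\cdot;y_\ast)\geq c>\tfrac13$ on the compact set $[z_0+\varepsilon,1-r]$ by compactness; the strict inequality $\omega(\cdot;y_\ast)>\tfrac13$ extends to $[1-r,1)$ via the power series expansion of Theorem~\ref{T:ANALYTICITY} using $\omega(1;y_\ast)=1/y_\ast>\tfrac13$. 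Upper semi-continuity of the sonic time and continuity of the flow away from it (Proposition~\ref{P:LOWERSEMICONT}(a,b)), combined with the $C^1$-dependence of the LP-expansion on $y_\ast$ near $z=1$ (Lemma~\ref{L:ANALYTIC}), then furnish a neighborhood $U$ of $y_\ast$ in $\Y$ such that, for every $\tilde y_\ast\in U$, simultaneously (i) $s(\tilde y_\ast)<z_0-\varepsilon$, (ii) $\omega(z_0-\varepsilon;\tilde y_\ast)<\tfrac13<\omega(z_0+\varepsilon;\tilde y_\ast)$, and (iii) $\omega(\cdot;\tilde y_\ast)>\tfrac13$ throughout $[z_0+\varepsilon,1)$. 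Applying the intermediate value theorem to $\omega(\cdot;\tilde y_\ast)-\tfrac13$ on $[z_0-\varepsilon,z_0+\varepsilon]$ produces a root $\tilde z$ in the open interval; (iii) forces $z_{\frac13}(\tilde y_\ast)\leq \tilde z<z_0+\varepsilon$, while (ii) forces $z_{\frac13}(\tilde y_\ast)>z_0-\varepsilon$, giving $|z_{\frac13}(\tilde y_\ast)-z_0|<\varepsilon$.

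The equality $z_{\frac13}(\bar y_\ast)=0$ is a direct by-product of Proposition~\ref{P:EXISTENCEY}, whose case analysis excludes the alternatives $z_{\frac13}(\bar y_\ast)\in(s(\bar y_\ast),1)$ and $z_{\frac13}(\bar y_\ast)=s(\bar y_\ast)>0$, and whose Case~2 additionally shows $\bar y_\ast\notin Y$; hence $Y=(\bar y_\ast,3]$ and~\eqref{E:ZONETHIRDCONT} is a one-sided limit from above. To prove it I would argue by contradiction, extracting a subsequence $y^n_\ast\downarrow\bar y_\ast$ with $z_{\frac13}(y^n_\ast)\to\bar z>0$. For each $n$ one has $\omega(z_{\frac13}(y^n_\ast);y^n_\ast)=\tfrac13$, and Lemma~\ref{L:LOWERBOUND} supplies a uniform $\eta>0$ with $1-(y^n_\ast)^2 z^2\omega(z;y^n_\ast)^2>\eta$ on $[z_{\frac13}(y^n_\ast),1-r]$. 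Applying Lemma~\ref{L:TQUANT} at $z_{\frac13}(y^n_\ast)$, which eventually lies in a compact subinterval of $(0,1)$ bounded away from $0$, produces a uniform continuation time $t_0>0$ with $s(y^n_\ast)<z_{\frac13}(y^n_\ast)-t_0$ for all large $n$. Picking any $\tilde z\in(\bar z-t_0/2,\bar z)$, for $n$ large we have $s(y^n_\ast)<\tilde z<z_{\frac13}(y^n_\ast)$, so Lemma~\ref{L:PREPZ}(a) gives $\omega(\tilde z;y^n_\ast)<\tfrac13$. Since $z_{\frac13}(\bar y_\ast)=0$, the strict inequality $\omega(\cdot;\bar y_\ast)>\tfrac13$ holds on $(0,1)$, so Proposition~\ref{P:LOWERSEMICONT}(b) yields $\omega(\tilde z;y^n_\ast)\to\omega(\tilde z;\bar y_\ast)>\tfrac13$, contradicting the previous bound.

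The main technical obstacle is the coordinated use of Lemma~\ref{L:LOWERBOUND} (sonic-distance lower bound on the crossing curve) and Lemma~\ref{L:TQUANT} (quantitative continuation) to produce a single $\tilde z$ lying strictly between every $s(y^n_\ast)$ and every $z_{\frac13}(y^n_\ast)$ for $n$ large; without this $n$-uniformity the solution curves could pinch against the sonic line exactly at $\bar z$ and evade the final contradiction via Proposition~\ref{P:LOWERSEMICONT}(b).
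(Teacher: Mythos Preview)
Your proposal is correct and follows essentially the same route as the paper. The only differences are cosmetic: for continuity the paper invokes the Implicit Function Theorem directly (using $\omega'(z_{\frac13}(y_\ast);y_\ast)>0$ from Lemma~\ref{L:PREPZ}(b)) to get $C^1$-dependence, whereas you carry out the equivalent $\varepsilon$--$\delta$/IVT argument by hand; and for the limit~\eqref{E:ZONETHIRDCONT}, the paper reaches the contradiction by concluding $\omega(\cdot;\bar y_\ast)\le\tfrac13$ on an interval, hence $\bar y_\ast\in Y$, contradicting openness of $Y$ and minimality of $\bar y_\ast$, while you first read off $z_{\frac13}(\bar y_\ast)=0$ from the case analysis in Proposition~\ref{P:EXISTENCEY} and then contradict $\omega(\tilde z;\bar y_\ast)>\tfrac13$ directly---the same ingredients (Lemma~\ref{L:LOWERBOUND}, Lemma~\ref{L:TQUANT}/Proposition~\ref{P:LOWERSEMICONT}(c), Lemma~\ref{L:PREPZ}(a), Proposition~\ref{P:LOWERSEMICONT}(b)) in the same order.
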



\begin{proof}
Let $y_\ast\in \Y$.  By Lemma~\ref{L:LOWERBOUND} there exists a $\delta>0$ such that $s(\tilde y_\ast)<z_{\frac13}(y_\ast)-\delta$ for all $\tilde y_\ast$ in an open neighbourhood of $y_\ast$. Since by part (b) of Lemma~\ref{L:PREPZ} 
$\omega'(z_{\frac13}(y_\ast);y_\ast)>0$, we may now use the Implicit Function Theorem to conclude that the map 
$y_\ast\mapsto z_{\frac13}(y_\ast)$ is in fact $C^1$.

To show~\eqref{E:ZONETHIRDCONT} assume the opposite: there exists a sequence $\{y_\ast^n\}_{n\in\mathbb N}\subset Y\subset\Y$  such that $\lim_{n\to \infty} y_\ast^n=\bar y_\ast$, but 
\begin{align}
\alpha:=\liminf_{n\to\infty} z_{\frac13}(y_\ast^n)>0.\notag
\end{align}
Upon passing to a subsequence, we may assume without loss of generality that
$\lim_{n\to\infty} z_{\frac13}(y_\ast^n)=\alpha$ and 
 $z_{\frac13}(y_\ast^n)>\frac\alpha2$ for all $n\in\mathbb N$. By Lemma~\ref{L:LOWERBOUND} and Proposition~\ref{P:LOWERSEMICONT}  there exists an $\epsilon=\epsilon(\alpha,\eta)$ (here $\eta$ is the constant from Lemma~\ref{L:LOWERBOUND})  such that $s(y^n_\ast)<z_{\frac13}(y_\ast^n)-3\epsilon$ for all $n\in\mathbb N$.
We infer that (upon possibly passing to a subsequence) $\omega(z;y_\ast^n)<\frac13$ for all $z\in[\alpha-2\epsilon,\alpha-\epsilon]$. Thus by continuity of the map $[2,3]\ni y_\ast\mapsto \omega(z;y_\ast)$ we conclude that 
$\omega(z;\bar y_\ast)\le\frac13$ for $z\in[\alpha-2\epsilon,\alpha-\epsilon]$,  which implies $\bar y_\ast
\in Y$. By part (b) of Lemma~\ref{L:PREPZ} there is an open neighbourhood of $\bar y_\ast$ that belongs to $Y$, which is a contradiction to the minimality property of $\bar y_\ast$ and part (b) of Lemma~\ref{L:TWOANDTHREE}.
\end{proof}



\subsection{Properties of the solution from the origin to the right}\label{SS:LEFT}

In order to complete the intersection argument in Section~\ref{S:INTERSECTION} we must better understand the solutions emanating from $z=0$ to the right.
Recall that $(\rho_-(\cdot;\rho_0),\omega_-(\cdot;\rho_0))$ is the unique local solution to~\eqref{E:RHOEQN}--\eqref{E:OMEGAEQN} satisfying the boundary conditions 
\be\label{E:LEFT}
\rho_-(0) = \rho_0>0, \ \ \omega_-(0)= \frac13;
\ee
existence and uniqueness are given by Theorem~\ref{T:ANALYTICITYLEFT}.
Let $s_-(\rho_0)$ denote the sonic time (from the left), i.e.
\begin{align}
s_-(\rho_0) : = \sup_{z\ge0} \left\{z \, \big| \ y_\ast z \omega_-(z;\rho_0)<1\right\}. \notag
\end{align}
We then have the following a priori bounds on $(\rho_-,\omega_-)$.

\begin{lemma}\label{L:APRIORI}
Let $\rho_-(0)=\rho_0>\frac13$, $\omega_-(0)=\frac13$, and $y_\ast\in[2,3]$. The solution $(\rho_-(z;\rho_0),\omega_-(z;\rho_0))$ to~\eqref{E:RHOEQN}--\eqref{E:OMEGAEQN} with the initial data~\eqref{E:LEFT} exists on
the interval $[0,s_-(\rho_0))$ and satisfies the following bounds:
\begin{align}
\omega_-(z;\rho_0)&>\frac13, \ \ z\in [0,s_-(\rho_0)) \label{E:APRIORI1}\\
\rho_-(z;\rho_0) + \omega_-(z;\rho_0) & < \rho_0 + \frac13,  \ \ z\in [0,s_-(\rho_0))\label{E:APRIORI2}\\
\rho_-(z;\rho_0) \omega_-(z;\rho_0)  &< \frac13 \rho_0, \ \ z\in [0,T_\ast)\label{E:APRIORI3}\\
\rho_-(z;\rho_0)& >\omega_-(z;\rho_0), \ \  z\in [0,s_-(\rho_0))\label{E:APRIORI4}\\
\rho_-'(z;\rho_0)&<0, \ \ z\in [0,s_-(\rho_0)).\label{E:APRIORI5}
\end{align}
\end{lemma}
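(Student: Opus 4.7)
The plan is to first use the Taylor expansion around $z=0$ provided by Theorem~\ref{T:ANALYTICITYLEFT} and the remark following it to establish that all five bounds hold strictly on a small interval $(0,\varepsilon)$, and then to propagate them forward by a combination of an invariant-region argument for \eqref{E:APRIORI1} and \eqref{E:APRIORI4}, and simple algebraic monotonicity identities for \eqref{E:APRIORI2} and \eqref{E:APRIORI3}. Continuation on $[0,s_-(\rho_0))$ will then follow from standard ODE theory, since the a priori bounds keep $(\rho_-,\omega_-)$ bounded uniformly below the sonic line on any compact sub-interval.

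\textbf{Initial step.} The remark after Theorem~\ref{T:ANALYTICITYLEFT} gives $\tilde\rho_1=\tilde\omega_1=0$, $\tilde\rho_2=-\tfrac13 y_\ast^2\rho_0(\rho_0-\tfrac13)<0$ and $\tilde\omega_2=\tfrac{2y_\ast^2}{45}(\rho_0-\tfrac13)>0$, using $\rho_0>\tfrac13$. Hence on a small interval $(0,\varepsilon)$ we have $\omega_-(z)>\tfrac13$, $\rho_-(z)<\rho_0$, $\rho_-'(z)<0$ and $\rho_-(z)-\omega_-(z)\to\rho_0-\tfrac13>0$ as $z\to 0^+$, so all of \eqref{E:APRIORI1}--\eqref{E:APRIORI5} hold strictly on $(0,\varepsilon)$.

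\textbf{Invariance of $R=\{\rho>\omega>\tfrac13\}$.} I would let
\begin{equation*}
z_c:=\sup\bigl\{z\in[0,s_-(\rho_0)):\rho_->\omega_->\tfrac13\text{ on }(0,z)\bigr\},
\end{equation*}
and argue by contradiction that $z_c=s_-(\rho_0)$. By definition $z<s_-(\rho_0)$ keeps the system \eqref{E:RHOEQN}--\eqref{E:OMEGAEQN} non-singular on $(0,z_c]$. At $z_c$ we must be in one of three situations. Case (i), $\omega_-(z_c)=\tfrac13$ and $\rho_-(z_c)>\tfrac13$: from \eqref{E:OMEGAEQN} directly $\omega_-'(z_c)=\frac{2y_\ast^2 z_c(1/3)^2}{1-y_\ast^2 z_c^2/9}(\rho_-(z_c)-\tfrac13)>0$, contradicting $\omega_->\tfrac13$ on $(0,z_c)$. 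Case (ii), $\rho_-(z_c)=\omega_-(z_c)>\tfrac13$: then \eqref{E:RHOEQN} forces $\rho_-'(z_c)=0$ and \eqref{E:OMEGAEQN} forces $\omega_-'(z_c)=(1-3\omega_-(z_c))/z_c<0$, so $(\rho_--\omega_-)'(z_c)>0$, contradicting $\rho_-\ge\omega_-$ on $[0,z_c]$. Case (iii), $\rho_-(z_c)=\omega_-(z_c)=\tfrac13$: here the sign-of-derivative arguments degenerate, and this is the only mildly delicate point. Since the system is non-singular at $z_c$, Picard--Lindel\"of uniqueness applies, and comparison with the Friedman solution $(\tfrac13,\tfrac13)$ (Remark~\ref{R:FRIEDMAN}) combined with backward uniqueness on $(0,z_c]$ forces $(\rho_-,\omega_-)\equiv(\tfrac13,\tfrac13)$, contradicting $\rho_-(0)=\rho_0>\tfrac13$. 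This establishes \eqref{E:APRIORI1} and \eqref{E:APRIORI4}, and \eqref{E:APRIORI5} is then immediate from \eqref{E:RHOEQN} since each factor in $\rho_-'$ has a definite sign on $(0,s_-(\rho_0))$.

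\textbf{Monotonicity identities.} For \eqref{E:APRIORI2}, adding \eqref{E:RHOEQN} and \eqref{E:OMEGAEQN} and combining the sonic terms yields
\begin{equation*}
(\rho_-+\omega_-)'=\frac{1-3\omega_-}{z}-\frac{2y_\ast^2 z\,\omega_-\,(\rho_--\omega_-)^2}{1-y_\ast^2 z^2\omega_-^2},
\end{equation*}
which is strictly negative on $(0,s_-(\rho_0))$ by \eqref{E:APRIORI1} and the subsonic condition, giving \eqref{E:APRIORI2} after integrating from $0$. For \eqref{E:APRIORI3}, the analogue of the identity \eqref{E:USEFUL} used in the outer region works here: a direct computation shows
\begin{equation*}
(\rho_-\omega_-)'=\frac{\rho_-(1-3\omega_-)}{z}<0\quad\text{on }(0,s_-(\rho_0)),
\end{equation*}
so $\rho_-\omega_-$ strictly decreases from its value $\rho_0/3$ at $z=0$. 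Existence on the full interval $[0,s_-(\rho_0))$ is then routine: on any compact $[0,Z]\subset[0,s_-(\rho_0))$, the bound \eqref{E:APRIORI2} and the definition of $s_-(\rho_0)$ keep $(\rho_-,\omega_-)$ bounded and uniformly away from the sonic line, so the local solution of Theorem~\ref{T:ANALYTICITYLEFT} extends. The only step that is not a one-line calculation is the exclusion of Case (iii) above, which is where the Friedman solution plays an essential role.
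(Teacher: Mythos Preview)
Your proof is correct and follows essentially the same strategy as the paper: derivative-sign contradictions at a first exit time, the Friedman solution to handle the degenerate case, and the identities $(\rho_-+\omega_-)'=\frac{1-3\omega_-}{z}-\frac{2y_\ast^2 z\omega_-(\rho_--\omega_-)^2}{1-y_\ast^2 z^2\omega_-^2}$ and $(\rho_-\omega_-)'=\frac{\rho_-(1-3\omega_-)}{z}$ for \eqref{E:APRIORI2}--\eqref{E:APRIORI3}. The only organizational difference is that you package \eqref{E:APRIORI1} and \eqref{E:APRIORI4} together as a single invariant-region argument, whereas the paper proves \eqref{E:APRIORI1} first on its own (with a slightly more elaborate contradiction when $\omega_-'(z_1)<0$, tracing back to an earlier point $z_2$ where $\rho_-'=0$) and then deduces \eqref{E:APRIORI4} separately from the linear equation $(\rho_--\omega_-)'+\frac{2y_\ast^2 z\omega_-(\rho_-+\omega_-)}{1-y_\ast^2 z^2\omega_-^2}(\rho_--\omega_-)=\frac{3\omega_--1}{z}$ via an integrating factor; your joint treatment is arguably cleaner.
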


\begin{proof}
We suppress the $\rho_0$-dependence in the notation for $(\rho_-,\omega_-)$.

\noindent
{\em Proof of~\eqref{E:APRIORI1}.}
Since $\omega''(0)>0$ for $\rho_0>\frac13$ for $0< z\ll1$ (by~\eqref{E:BCATZEROOMEGA}) it is clear that~\eqref{E:APRIORI1} is true for any sufficiently small $z>0$. 
Suppose now that~\eqref{E:APRIORI1} is wrong and let $0<z_1<s_-(\rho_0)$ be the first $z_1$ such that $\omega_-(z_1)=\frac13$ and $\frac13<\omega_-(z)$ for $0<z<z_1$. Then $\omega_-'(z_1)\leq 0$. First suppose $\omega_-'(z_1)<0$. Then from \eqref{E:RHOEQN}--\eqref{E:OMEGAEQN} we deduce that $\rho_-(z_1)<\omega_-(z_1)=\frac13$ and $\rho'_-(z_1)>0$. Hence, there should exist $0<z_2<z_1$ such that $\rho_-'(z_2)=0$ and $\rho_-(z_2)< \rho_-(z_1)<\frac13$. Then from \eqref{E:RHOEQN}, $\omega_-(z_2)=\rho_-(z_2)$, which is a contradiction to the definition of $z_1$. Next let $\omega_-'(z_1)=0$. Then $\rho_-(z_1)=\omega_-(z_1)=\frac13$ and also $\rho'(z_1)=0$. Since $z_1$ is away from the sonic line, $(\rho_-,\omega_-,)$ is smooth and the conditions $\rho_-(z_1)=\omega_-(y_1)=\frac13$ and $\omega_-'(z_1)=\rho_-'(z_1)=0$ give $\rho_-=\omega_-=\frac13$ in an open neighborhood,  which is a contradiction. Here we have used uniqueness and the existence of the Friedman solution $(\rho_F,\omega_F)\equiv(\frac13,\frac13)$, see Remark~\ref{R:FRIEDMAN}.

\noindent
{\em Proof of~\eqref{E:APRIORI2}.}
This follows from 
\be
(\rho_-+\omega_-)' =  \frac{1-3\omega_-}{z}  -  \frac{2\S^2z\omega_-}{ 1- (\S z\omega_-)^2} (\rho_--\omega_-)^2  \notag
\ee
which is negative  for $0<z<s_-(\rho_0)$ since $\omega_->\frac13$. 


\noindent
{\em Proof of~\eqref{E:APRIORI3}.}
This follows from 
\be\label{RW}
(\rho_-\omega_-)' = \frac{\rho_-(1-3\omega_-)}{z}
\ee
which is negative  for $0<z<s_-(\rho_0)$  since $\omega_->\frac13$.

\noindent
{\em Proof of~\eqref{E:APRIORI4} and~\eqref{E:APRIORI5}.}
This follows from 
\be
(\rho_--\omega_-)' =  \frac{3\omega_--1}{z}  - \frac{2\S^2z\omega_-(\rho_-+\omega_-)}{ 1- (\S z\omega_-)^2} (\rho_--\omega_-) \notag
\ee
by integrating in $z$. Claim~\eqref{E:APRIORI5} follows from \eqref{E:RHOEQN}.
\end{proof}


\begin{lemma}
Let $\rho_0>\frac13$ be given and consider the unique solution $(\rho_-(z;\rho_0),\omega_-(z;\rho_0))$ to the initial-value problem~\eqref{E:RHOEQN}--\eqref{E:OMEGAEQN},~\eqref{E:LEFT}.
Assume that  $\rho_-(z_0;\rho_0)>\frac1{y_\ast z_0}$ for some $z_0\in(0,s_-(\rho_0))$. Then 
\begin{align}
\rho_-(z;\rho_0) >\frac{1}{y_\ast z}, \ \ z\in[z_0,s_-(\rho_0)).\notag
\end{align}
\end{lemma}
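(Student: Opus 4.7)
\medskip

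\noindent\textbf{Proof plan.} The natural object to track is the signed distance to the barrier: I would set
\[
F(z) := y_\ast z\,\rho_-(z;\rho_0) - 1,
\]
so that the claim becomes the statement that $F>0$ on $[z_0,s_-(\rho_0))$, given $F(z_0)>0$. The plan is to show that wherever $F$ vanishes, its derivative is strictly positive, so that $F$ cannot cross zero from above.

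Using equation~\eqref{E:RHOEQN} one computes
\[
F'(z) = y_\ast \rho_- + y_\ast z\,\rho_-'
      = y_\ast \rho_-\cdot\frac{1+y_\ast^2 z^2\omega_-^2 - 2y_\ast^2 z^2\omega_-\rho_-}{1-y_\ast^2 z^2\omega_-^2}.
\]
At a point $z_\ast$ where $F(z_\ast)=0$, i.e.\ $\rho_-(z_\ast)=\tfrac{1}{y_\ast z_\ast}$, the numerator simplifies to
\[
1+y_\ast^2 z_\ast^2\omega_-^2 - 2 y_\ast z_\ast\omega_-
= \bigl(1-y_\ast z_\ast\omega_-(z_\ast)\bigr)^2 \ge 0,
\]
while the denominator $1-y_\ast^2 z_\ast^2\omega_-(z_\ast)^2>0$ since $z_\ast<s_-(\rho_0)$; moreover $\rho_-(z_\ast)>0$ by Lemma~\ref{L:APRIORI}. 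Hence $F'(z_\ast)\ge 0$, with equality only if $\omega_-(z_\ast)=\tfrac{1}{y_\ast z_\ast}=\rho_-(z_\ast)$.

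To conclude, I would argue by contradiction: if the claim failed, let $z_1:=\inf\{z\in(z_0,s_-(\rho_0))\mid F(z)\le 0\}$. By continuity $F(z_1)=0$ and $F(z)>0$ on $[z_0,z_1)$, which forces $F'(z_1)\le 0$. Combined with the computation above this forces $F'(z_1)=0$ and therefore $\omega_-(z_1)=\rho_-(z_1)$. But estimate~\eqref{E:APRIORI4} of Lemma~\ref{L:APRIORI} asserts $\rho_->\omega_-$ strictly on $[0,s_-(\rho_0))$, a contradiction. Thus no such $z_1$ exists and $F>0$ on the whole interval $[z_0,s_-(\rho_0))$.

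The only step that requires any real care is the algebraic identification of the numerator of $F'$ with the perfect square $(1-y_\ast z\omega_-)^2$ at $F=0$; once that is in place, the argument is a textbook invariant-region/trapping argument and relies crucially on the previously established strict separation $\rho_->\omega_-$ from Lemma~\ref{L:APRIORI}.
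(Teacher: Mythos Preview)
Your argument is correct and follows essentially the same route as the paper: both track the function $1-y_\ast z\rho_-$ (your $-F$, the paper's $f_-$) and rely on the same perfect-square identity $(1-y_\ast z\omega_-)^2$ appearing in its derivative. The only difference is in the final step: the paper rewrites the computation as a linear ODE $f_-'+a(z)f_- = b(z)$ with $b\le 0$ and applies an integrating factor to conclude $f_-<0$ directly, which avoids any appeal to the strict separation $\rho_->\omega_-$ from Lemma~\ref{L:APRIORI}; your first-crossing argument is equally valid but uses~\eqref{E:APRIORI4} to dispose of the degenerate case $F'(z_1)=0$.
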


\begin{proof}
Just like in the proof of Lemma~\ref{L:APRIORIODE} we consider
\begin{align}
f_-(z) : = 1 - \S z \rho_-(z).\notag
\end{align} 
Equation~\eqref{E:FEQN} then reads
\begin{align}\label{E:FEQNMINUS}
f_-'(z)  + f_-(z) \frac{2z\S^2\omega_-\rho_-}{1-\S^2z^2\omega_-^2} = - \frac{\S (1-\S z \omega_-)^2 \rho_-}{1-\S^2z^2\omega_-^2}
\end{align}
By our assumptions $f_-(z_0)<0$. Since the right-hand side of~\eqref{E:FEQNMINUS} is negative, we conclude
\[
\frac{d}{dz}\left(f_-(z) \exp\left(\int_{z_0}^z \frac{2\tau\S^2\omega_-\rho_-}{1-\S^2\tau^2\omega_-^2}\,d\tau \right)\right) <0, \ \ z\in[z_0,s_-(\rho_0)),
\]
which gives the claim.
\end{proof}


In the following lemma we identify a spatial scale $z_0\sim\frac1{\rho_0}$ over which we obtain quantitative lower bounds on the density $\rho_-$ over $[0,z_0]$.


\begin{lemma}\label{L:RHOMINUS}
Let $\rho_0>\frac13$ and $y_\ast\in[2,3]$ be given and consider the unique solution $(\rho_-(z;\rho_0),\omega_-(z;\rho_0))$ to the initial-value problem~\eqref{E:RHOEQN}--\eqref{E:OMEGAEQN},~\eqref{E:LEFT}.
For any $\rho_0>\frac13$ let
\begin{align}\label{E:ZNODDEF}
z_0=z_0(\rho_0): = 
\begin{cases}
\frac{\sqrt 3 }{\sqrt 2 y_\ast \rho_0} & \rho_0> 1; \\
\frac{\sqrt 3 }{\sqrt 2 y_\ast},  & \frac13 < \rho_0\le1.
\end{cases} 
\end{align}
Then $s_-(\rho_0)>z_0$ for all $\rho_0>\frac13$ and 
\begin{align}\label{E:APRIORILEFT}
\rho_-(z;\rho_0) \ge 
\begin{cases}
\rho_0 \exp\left(- \rho_0^{-1}\right), & \rho_0>1;\\
 \rho_0 \exp\left(-1\right), & \frac13 < \rho_0\le1, 
\end{cases}
\ \ z\in[0, z_0].
\end{align}
Moreover, there exists an $R>1$ such that for all $\rho_0>R$ we have 
\begin{align}\label{E:RHOZZERO}
\rho_-(z_0;\rho_0)& >\frac1{y_\ast z_0}.
\end{align}
\end{lemma}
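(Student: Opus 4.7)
The plan is to peel off the three claims in order using only Lemma~\ref{L:APRIORI} and direct logarithmic integration of~\eqref{E:RHOEQN}. First I would establish $s_-(\rho_0)>z_0$ by extracting from \eqref{E:APRIORI3}--\eqref{E:APRIORI4} the bound $\omega_-^2<\omega_-\rho_-<\rho_0/3$ on $[0,s_-(\rho_0))$. Direct substitution of~\eqref{E:ZNODDEF} shows in both cases that $y_\ast^2 z_0^2\,\rho_0/3\le \tfrac12$, so $y_\ast^2 z^2\omega_-^2<\tfrac12$ on $[0,s_-(\rho_0))\cap[0,z_0]$. Were $s_-(\rho_0)\le z_0$, continuity and the definition of $s_-(\rho_0)$ would force $y_\ast^2 s_-(\rho_0)^2\omega_-(s_-(\rho_0))^2=1$, contradicting this strict bound $\tfrac12$.

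Next, for the estimate~\eqref{E:APRIORILEFT}, the preceding step gives $1-y_\ast^2 z^2\omega_-^2\ge \tfrac12$ on $[0,z_0]$, so rewriting~\eqref{E:RHOEQN} in logarithmic form, using $\rho_--\omega_-\le\rho_-$ from \eqref{E:APRIORI4}, and invoking \eqref{E:APRIORI3} once more yields
\[
\lv(\log\rho_-)'\rv=\frac{2y_\ast^2 z\,\omega_-(\rho_--\omega_-)}{1-y_\ast^2 z^2\omega_-^2}\le 4 y_\ast^2 z\,\omega_-\rho_-\le \frac{4 y_\ast^2\rho_0 z}{3}.
\]
Integration over $[0,z]\subset[0,z_0]$ together with~\eqref{E:ZNODDEF} gives $|\log\rho_-(z)-\log\rho_0|\le 1/\rho_0$ when $\rho_0>1$ and $\le \rho_0\le 1$ when $\tfrac13<\rho_0\le 1$, which is exactly~\eqref{E:APRIORILEFT}.

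Finally, for $\rho_0>1$ direct computation with~\eqref{E:ZNODDEF} gives $\frac{1}{y_\ast z_0}=\sqrt{2/3}\,\rho_0$, so combining with the lower bound just proved reduces~\eqref{E:RHOZZERO} to the scalar inequality $e^{-1/\rho_0}>\sqrt{2/3}$. Since $\sqrt{2/3}<1$, this holds for every $\rho_0>R:=2/\log(3/2)$, and any such $R$ works.

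The only subtle point, and the reason the specific choice~\eqref{E:ZNODDEF} is made, is the control of $(\log\rho_-)'$. The a priori product bound $\omega_-\rho_-<\rho_0/3$ from~\eqref{E:APRIORI3} is strictly sharper than the product of the individual sup bounds ($\omega_-<\sqrt{\rho_0/3}$ and $\rho_-\le\rho_0$), and precisely this cancellation converts the short window $z_0\sim 1/\rho_0$ into only an $O(1/\rho_0)$ loss in $\log\rho_-$; the two scalings — sonic-line avoidance $y_\ast^2 z_0^2\,\rho_0\lesssim 1$ and density preservation $y_\ast^2 z_0^2\,\rho_0\lesssim 1$ — coincide, which is why the same $z_0$ serves both purposes.
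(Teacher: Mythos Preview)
Your proof is correct and follows essentially the same route as the paper: both arguments use the a priori product bound $\omega_-\rho_-<\rho_0/3$ from Lemma~\ref{L:APRIORI} to simultaneously bound the sonic denominator below by $\tfrac12$ on $[0,z_0]$ and to control the logarithmic derivative of $\rho_-$, then integrate to get~\eqref{E:APRIORILEFT} and reduce~\eqref{E:RHOZZERO} to the scalar inequality $e^{-1/\rho_0}>\sqrt{2/3}$. Your explicit choice $R=2/\log(3/2)$ is a minor refinement over the paper, which simply asserts the inequality holds for large $\rho_0$.
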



\begin{proof}
Equation~\eqref{E:RHOEQN} is equivalent to 
\begin{align}\label{E:RHOLEFT}
\rho_-(z) = \rho_0 \exp\left(-\int_0^z\frac{2y_\ast^2 \tau \omega_-(\rho_--\omega_-)}{1- y_\ast^2 \tau^2 \omega_-^2}\,d\tau\right).
\end{align}
By Lemma \ref{L:APRIORI} we have 
the following bounds on the interval $(0, s_-(\rho_0))$
\begin{align}
\omega_-<\rho_- & <\rho_0, \label{E:B1}\\
\frac13<\omega_- & <\sqrt{\frac{\rho_0}{3}}. \label{E:B2}
\end{align}
In particular, $0<\rho_--\omega_-<\rho_0$. 
Therefore, for any $0\le z\le z_0$ using~\eqref{E:B2} 
we have 
\begin{align}
1- y_\ast^2 z^2 \omega_-^2 \ge 1 - y_\ast^2 z_0^2 \frac{\rho_0}{3} =1 - \frac1{2\rho_0}>\frac12, \label{E:B3}
\end{align} 
if $\rho_0>1$. In the case $\rho_0\in(\frac13,1]$ estimate analogous to~\eqref{E:B3} gives the same lower bound and thus $s_-(\rho_0)>z_0$ for all $\rho_0>0$.
From~\eqref{E:B1},~\eqref{E:APRIORI3},
and~\eqref{E:B3} for any $z\in [0,z_0]$ we obtain
\begin{align*}
\int_0^z\frac{2y_\ast^2 \tau \omega_-(\rho_--\omega_-)}{1- y_\ast^2 \tau^2 \omega_-^2}\,d\tau
 \le \frac{4y_\ast^2 \rho_0}{3} \int_0^z\tau\,d\tau  
  \le \frac{2y_\ast^2 \rho_0}{3}  z_0^2 
 = 
 \begin{cases}
  \rho_0^{-1}, & \rho_0>1;\\
 \rho_0 \le 1, & \frac13 < \rho_0\le1.
 \end{cases}
\end{align*}
Plugging the above bound in~\eqref{E:RHOLEFT} we obtain~\eqref{E:APRIORILEFT}.
From~\eqref{E:APRIORILEFT} the bound~\eqref{E:RHOZZERO} follows if 
$
\exp\left(-\rho_0^{-1}\right)>\frac{\sqrt 2}{\sqrt 3},
$
which is clearly true for sufficiently large $\rho_0$. 
\end{proof}

\begin{remark}\label{R:COMMONINTERVAL}
Since the mapping $\rho_0\mapsto z_0(\rho_0)$ from~\eqref{E:ZNODDEF} is nonincreasing, it follows that for any fixed $\rho_0>\frac13$ we have the uniform bound
on the sonic time:
\begin{align}
s_-(\tilde\rho_0)>z_0(\tilde\rho_0)\ge z_0(\rho_0), \ \ \text{ for all }\ \ \frac13  <\tilde\rho_0\le\rho_0.\notag
\end{align}
\end{remark}

The following lemma shows the crucial monotonicity property of $\rho_-(\cdot;\rho)$ with respect to $\rho_0$ on a time-scale of order $\sim \rho_0^{-\frac34}$.

\begin{lemma}\label{L:MONOTONE}
Let $\S\in[2,3]$. There exists a sufficiently small $\eta>0$ such that for all $\rho_0\ge \frac13$  
\begin{align}
\pa_{\rho_0}\rho_-(z;\rho_0)>0 \ \ \text{ for all } \ z\in[0, \eta \rho_0^{-\frac34}]. \notag
\end{align} 
\end{lemma}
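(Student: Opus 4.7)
The plan is to differentiate~\eqref{E:RHOEQN}--\eqref{E:OMEGAEQN} with respect to the initial datum $\rho_0$. Writing the right-hand sides as $F_\rho(z,\rho,\omega)$ and $F_\omega(z,\rho,\omega)$, and setting $P := \partial_{\rho_0}\rho_-$, $W := \partial_{\rho_0}\omega_-$, the linear variational system reads
\[
P' = AP + BW,\qquad W' = CP + DW,\qquad P(0)=1,\ W(0)=0,
\]
with $A = \partial_\rho F_\rho,\ B = \partial_\omega F_\rho,\ C = \partial_\rho F_\omega,\ D = \partial_\omega F_\omega$. The structural feature I would exploit is that $D(z) = -3/z + R(z)$ with $R$ regular at $z=0$; the corresponding $z^3$ integrating factor in the $W$-equation is the mechanism that upgrades the naive $|W| \lesssim z$ to $|W| \lesssim z^2$, which is the quantitative difference that drives the whole proof.

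First I would set up a bootstrap for $(\rho_-,\omega_-)$ on the interval $[0,\eta\rho_0^{-3/4}]$. The elementary smallness estimate
\[
z^2\rho_0 \le \eta^2\rho_0^{-1/2} \le \sqrt{3}\,\eta^2, \qquad z \in [0,\eta\rho_0^{-3/4}],\ \rho_0 \ge \tfrac13,
\]
combined with Lemma~\ref{L:APRIORI} (in particular the monotonicity $(\rho_-\omega_-)'\le 0$ from the proof of~\eqref{E:APRIORI3} and $\rho_-'\le 0$ from~\eqref{E:APRIORI5}) and the integral representation
\[
\rho_-(z) = \rho_0\exp\!\left(-\int_0^z \tfrac{2y_\ast^2\tau \omega_-(\rho_--\omega_-)}{1-y_\ast^2\tau^2\omega_-^2}\,d\tau\right),
\]
closes a bootstrap showing that, for $\eta$ small enough, the solution exists on the full interval with $\rho_- \in [\tfrac12\rho_0,\rho_0]$, $\omega_- \in [\tfrac13,\tfrac23]$, and $1-y_\ast^2 z^2\omega_-^2 \ge \tfrac12$. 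These imply the uniform coefficient bounds $|A|\lesssim z\rho_0$, $|B|\lesssim z\rho_0^2$, $|C|\lesssim z$, $|R|\lesssim z\rho_0$, independent of $\rho_0\ge \tfrac13$.

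Next I would exploit the singular structure of the $W$-equation. Multiplying $W' + (3/z)W = RW + CP$ by $z^3$ gives
\[
W(z) = \frac{1}{z^3}\int_0^z \tau^3 \bigl[R(\tau)W(\tau) + C(\tau)P(\tau)\bigr]\,d\tau,
\]
and a standard bootstrap on $|W|\le \tilde C z^2$, $|P|\le 2$, using the coefficient bounds together with $z^2\rho_0\lesssim \eta^2$, closes to yield $|W(z)|\le \tilde C z^2$ with $\tilde C$ uniform in $\rho_0\ge\tfrac13$. For the $P$-equation, set $\mu(z) := \exp(\int_0^z A(\tau)\,d\tau)$; since $A\le 0$ and $|\int_0^z A|\lesssim z^2\rho_0\lesssim \eta^2$, we obtain $\mu(z)\in[\tfrac12,1]$ for $\eta$ small. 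Duhamel's formula
\[
P(z) = \mu(z) + \int_0^z \frac{\mu(z)}{\mu(\tau)} B(\tau)W(\tau)\,d\tau,
\]
together with the decisive algebraic identity $z^4\rho_0^2 = (z^2\rho_0)^2 \lesssim \eta^4$, gives $|\int_0^z \mu^{-1}BW\,d\tau|\lesssim \eta^4$, so $P(z)\ge \tfrac14$ throughout $[0,\eta\rho_0^{-3/4}]$ for $\eta$ sufficiently small, which is the claim. The main obstacle is precisely the singular $-3/z$ coefficient in the $W$-equation: a naive Gr\"onwall estimate would give only $|W|\lesssim z$, and then the cross term $BW \sim z^2\rho_0^2$ would fail to be small on the $\rho_0^{-3/4}$ time-scale. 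The extra factor of $z$ provided by the $z^3$ integrating factor, which reflects the three-dimensional radial geometry encoded in~\eqref{E:OMEGAEQN}, is precisely what closes the argument.
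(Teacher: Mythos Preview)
Your proof is correct and proceeds by a genuinely different mechanism from the paper's. Both arguments linearise~\eqref{E:RHOEQN}--\eqref{E:OMEGAEQN} in $\rho_0$ and both ultimately exploit the singular coefficient $-3/z$ in the $W$-equation, but the way that coefficient is leveraged differs. The paper multiplies the $\partial\omega_-$-equation by $\partial\omega_-$ and integrates, so that the $-3/z$ term contributes the good sign $\int_0^z \tfrac{3}{\tau}(\partial\omega_-)^2\,d\tau$ on the left; a Cauchy--Schwarz splitting of the cross term and the substitution $\rho_0\le \eta^{4/3}\tau^{-4/3}$ then yield $|\partial\omega_-|\le C\eta^2\rho_0^{-1/2}\|\partial\rho_-\|_\infty$, after which $\partial\rho_-$ is estimated by direct integration. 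Your argument instead uses the integrating factor $z^3$ pointwise, turning the $W$-equation into a Volterra integral and closing a bootstrap $|W|\lesssim z^2$; you then run Duhamel on the $P$-equation with the key observation $z^4\rho_0^2=(z^2\rho_0)^2\lesssim\eta^4$.

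A secondary difference is in the background bounds: the paper works only with the crude estimates $\omega_-^2\le\rho_0/3$ and $\rho_-\omega_-\le\rho_0/3$ from Lemma~\ref{L:APRIORI}, whereas you first run a short bootstrap on $(\rho_-,\omega_-)$ to get the sharper $\rho_-\in[\tfrac12\rho_0,\rho_0]$ and hence $\omega_-\le\tfrac23$ on $[0,\eta\rho_0^{-3/4}]$. This makes your coefficient bounds $|A|\lesssim z\rho_0$, $|B|\lesssim z\rho_0^2$, $|C|\lesssim z$, $|R|\lesssim z\rho_0$ cleaner. Your approach is somewhat more direct, in that the pointwise bound $|W|\lesssim z^2$ captures exactly the Taylor behaviour of $\partial_{\rho_0}\omega_-$ near $z=0$ (cf.~\eqref{E:BCATZEROOMEGA}); the paper's energy method trades this structural information for a more robust $L^2$-flavoured estimate. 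Both routes close on the same time-scale $\eta\rho_0^{-3/4}$ for the same underlying reason, namely that $z^2\rho_0\lesssim\eta^2$ there.
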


\begin{proof}
We introduce the short-hand notation $\pa\rho_- = \pa_{\rho_0}\rho_-$ and $\pa\omega_- = \pa_{\rho_0}\omega_-$. It is easy to check that $(\pa\rho_-,\pa\omega_-)$ solve
\begin{align}
\pa\omega_-' & = - \frac{3}{z} \pa\omega_- +  
 \frac{4y_\ast^2 z\omega_-(\rho_- -\omega_-)}{(1-y_\ast^2 z^2 \omega_-^2)^2} \po - \frac{2y_\ast^2 z\omega_-^2}{1-y_\ast^2 z^2 \omega_-^2} \po   +\frac{2y_\ast^2 z \omega_-^2}{1-y_\ast^2 z^2 \omega_-^2}\pr \label{E:VARONE}\\
\pa\rho_-'  & = - \left(\frac{2y_\ast^2 z \omega_-(\rho_--\omega_-)}{1-y_\ast^2 z^2 \omega_-^2} +
\frac{2y_\ast^2 z \omega_- \rho_-}{1-y_\ast^2 z^2 \omega_-^2} \right) \pr  \notag \\
&  \ \ \ \ - \left(\frac{2y_\ast^2 z (\rho_--\omega_-)\rho_-}{1-y_\ast^2 z^2 \omega_-^2} 
- \frac{2y_\ast^2 z \omega_- \rho_-}{1-y_\ast^2 z^2 \omega_-^2} 
+ \frac{4 y_\ast^4 z^3 \omega_-^2 \rho_-(\rho_--\omega_-)}
{\left(1-y_\ast^2 z^2 \omega_-^2\right)^2}\right) \po.\label{E:VAR2}
\end{align}
At $z=0$ we have the initial values
\begin{align}\label{E:BDRYOR}
\pr(0) = 1, \ \ \po(0)=0.
\end{align}

We multiply~\eqref{E:VARONE} by $\po$ and integrate over the region $[0,z]$. By~\eqref{E:BDRYOR} we obtain
\begin{align}
\frac12\po^2(z) + \int_0^z \left(\frac3\tau +\frac{2\tau y_\ast^2 \omega_-^2}{1- y_\ast^2\tau^2 \omega_-^2 }\right) \po^2 \,d\tau
& = \int_0^z 
  \frac{4y_\ast^2 z\omega_-(\rho_- -\omega_-)}{(1-y_\ast^2 z^2 \omega_-^2)^2} \po^2\,d\tau \notag \\
& \ \ \ \  + \int_0^z \left(\frac{2y_\ast^2 \tau \omega_-^2}{1-y_\ast^2 \tau^2 \omega_-^2}\right)\pr \po\,d\tau \label{E:INT1}
\end{align}
Since $\omega_-(z)^2\le \frac{\rho_0}{3}$ (by Lemma~\ref{L:APRIORI}) and $y_\ast\le 3$ we have 
$1-y_\ast^2 \tau^2 \omega_-^2\ge 1 - 3 \tau^2\rho_0$. Therefore 
\begin{align}\label{E:LOWERBOUNDRHOZERO}
1-y_\ast^2 \tau^2 \omega_-^2 \ge \frac12, \text{ for any } \  \tau \in[0, (6\rho_0)^{-\frac12}]. 
\end{align}
Using the bounds $\ya\le3$, \eqref{E:B1}--\eqref{E:B2},~\eqref{E:LOWERBOUNDRHOZERO}, and $\rho_-\omega_-\le \frac{\rho_0}3$ (Lemma~\ref{L:APRIORI}), we obtain from~\eqref{E:INT1}
\begin{align}
& \frac12\po^2(z) + \int_0^z \left(\frac3\tau +\frac{2\tau y_\ast^2 \omega_-^2}{1- y_\ast^2\tau^2 \omega_-^2 }\right)\po^2 \,d\tau \notag \\
& \le  C \int_0^z 
\rho_0\tau \po^2\,d\tau 
 + C \rho_0 \int_0^z \tau |\pr| |\po| \, d\tau, \ \ z\le (6\rho_0)^{-\frac12} \label{E:INT2}
\end{align}
Let $Z = \eta \rho_0^{-\frac34}$ with a sufficiently small $\eta>0$ to be specified later. Note that $Z< (6\rho_0)^{-\frac12}$ for all $\rho_0\ge 1$ and $\eta$ chosen sufficiently small and independent of $\rho_0$. 
For any $\tau\in [0,Z]$ we have $\rho_0 \le \eta^{\frac43} \tau^{-\frac43}$. Therefore
$\rho_0 \tau \leq \eta^{\frac{4}3} \tau^{-\frac13}$.
From these estimates and~\eqref{E:INT2} we conclude
\begin{align}
& \frac12\po^2(z) + \int_0^z \left(\frac3\tau +\frac{2\tau y_\ast^2 \omega_-^2}{1- y_\ast^2\tau^2 \omega_-^2 }\right)\po^2 \,d\tau \notag \\
& \le  C \int_0^z 
\eta^{\frac{4}3} \tau^{-\frac13}\po^2\,d\tau 
 + \frac C{\sqrt 3} \rho_0 \left(\int_0^z\frac3\tau \po^2\,d\tau \right)^{\frac12} \left(\int_0^z \tau^3 \pr^2 \, d\tau\right)^{\frac12} \notag \\
 & \le  C \int_0^z 
 \eta^{\frac{4}3} \tau^{-\frac13}\po^2\,d\tau 
 + \frac12 \int_0^z\frac3\tau \po^2\,d\tau +  \frac {C^2}{6} \rho_0^2 \|\pr\|_{\infty}^2 \int_0^z \tau^3\,d\tau, \ \ z\in[0,Z].
 \label{E:INT3}
\end{align}
With $\eta$ chosen sufficiently small, but independent of $\rho_0$, we can absorb the first two integrals on the right-most side
into the term $\int_0^z \frac3\tau \po^2\,d\tau$ on the left-hand side. Since $\int_0^z\tau^3\,d\tau = \frac14 \eta^4\rho_0^{-3}$ we conclude
\begin{align}\label{E:POBOUND}
\lv \po(z)\rv \le C\eta^2  \rho_0^{-\frac12} \|\pr\|_{\infty}, \ \ z\in[0,Z].
\end{align}

We now integrate~\eqref{E:VAR2} and conclude from~\eqref{E:BDRYOR}
\begin{align}
\lv \pr(z) - 1\rv & \le  C\rho_0 \|\pr\|_{\infty} \int_0^z \tau\,d\tau   
+  C\|\po\|_{\infty} \int_0^z \left(\rho_0^2 \tau + \rho_0 \tau + \rho_0^{2}\tau^3\right) \,d\tau \notag \\
& \le C\eta^2  \|\pr\|_{\infty},  \ \ z\in[0,Z],  \notag
\end{align}
where we have used~\eqref{E:POBOUND},~\eqref{E:APRIORI3}, and $0\le z\le \eta \rho_0^{-\frac34}$. Therefore, 
\[
\|\pr\|_{\infty} \le 1 + C\eta^2 \|\pr\|_{\infty}
\]
and thus, for $\eta$ sufficiently small so that $C\eta^2<\frac13$, we have $\|\pr\|_\infty\le \frac32$. From here we infer
\begin{align}
\pr(z) \ge 1 - \frac32C\eta^2>\frac12>0, \ \ z\in[0,Z].\notag
\end{align}
\end{proof}

\subsection{Existence of the LP-solution connecting the origin to the sonic point} \label{S:INTERSECTION}

The goal of this section is to carry out the intersection argument to show that $\lim_{z\to0^+}\omega(z;\bar y_\ast)=\frac13$. 
Before that we prove an important technical lemma that will be used later on.


\begin{lemma}\label{L:XIMP}
Let $x_\ast\in \X$ (see~\eqref{E:XDEF} for the definition of $\X$) and assume that $s(x_\ast)=0$. 
Then
\begin{enumerate}
\item[(a)]
\begin{align}
\rho(z;x_\ast)>\omega(z;x_\ast), \ \ z\in(0,1);\notag
\end{align}
\item[(b)]
\begin{align}
\limsup_{z\to0} z \omega(z;x_\ast) >0.\notag
\end{align}
\end{enumerate}
\end{lemma}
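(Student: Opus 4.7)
I would prove (a) first and reuse one of its estimates in (b). For (a), argue by contradiction: set $z_c := \sup\{z \in (0,1) \colon \rho(z;x_\ast) \le \omega(z;x_\ast)\}$. The Larson-Penston expansion~\eqref{E:LPCONDITION} gives $(\rho-\omega)'(1) = 1/y_\ast - 1 < 0$, so $\rho > \omega$ in a left neighbourhood of $z = 1$ and hence $z_c < 1$. At $z_c$ one reads off from~\eqref{E:RHOEQN}--\eqref{E:OMEGAEQN} that $\rho'(z_c) = 0$ and $\omega'(z_c) = (1 - 3 w_c)/z_c < 0$ (since $w_c := \omega(z_c) \ge c > \tfrac13$ by $x_\ast \in \X$), so $(\rho - \omega)'(z_c) > 0$. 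A standard dynamical-invariance argument then shows $\rho < \omega$ strictly on $(0, z_c)$: on $\{\rho < \omega\} \cap \{\omega > \tfrac13\} \cap \{\text{below sonic}\}$, equations~\eqref{E:RHOEQN}--\eqref{E:OMEGAEQN} force $\rho' > 0$, $\omega' < 0$ and $(\rho - \omega)' > 0$, so any putative crossing of $\rho = \omega$ inside $(0, z_c)$ would satisfy $(\rho-\omega)'>0$ at the crossing and hence must be one-sided, contradicting a minimality/continuity argument; the invariants $\omega > \tfrac13$ and sub-sonicity hold on all of $(0, z_c)$ by $x_\ast \in \X$ and $s(x_\ast) = 0$, respectively.

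The contradiction arises from two incompatible estimates on $\rho\omega z^2$ as $z \to 0^+$. On $(0,z_c)$ the sign of $\rho'$ shows $\rho \le w_c$, and~\eqref{E:OMEGAPRIORI} then gives
\begin{equation*}
\rho(z)\omega(z) z^2 = (\rho z)(\omega z) \le \frac{w_c}{y_\ast}\,z \xrightarrow{z \to 0^+} 0.
\end{equation*}
On the other hand, $\rho < \omega$ makes the second term in~\eqref{E:OMEGAEQN} nonpositive, so $\omega'(z) \le (1 - 3c)/z$; integrating from $z$ to $z_c$ yields $\omega(z) \ge w_c + (3c-1)\log(z_c/z) \to \infty$. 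Hence $\omega > 1$ on some interval $(0, z_1) \subset (0, z_c)$, on which~\eqref{E:USEFUL} reads $(\rho\omega z^2)' = (1-\omega)\rho z < 0$; thus $\rho\omega z^2$ is monotone increasing as $z \to 0^+$, with limit $\ge \rho(z_1) \omega(z_1) z_1^2 > 0$, contradicting the upper bound.

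For (b), I would first show $\omega(\cdot; x_\ast)$ cannot be bounded on any left neighbourhood of the origin: if $\omega \le M$ on $(0, z_*)$, then~\eqref{E:RHOAPRIORI} keeps the nonlinearity $R := 2 y_\ast^2 z \omega^2(\rho - \omega)/(1 - y_\ast^2 z^2 \omega^2)$ in~\eqref{E:OMEGAEQN} uniformly bounded, and integrating the identity $(z^3 \omega)' = z^2 + z^3 R$ (obtained by multiplying~\eqref{E:OMEGAEQN} by $z^3$) yields $\omega(z) = \tfrac13 + O(z)$, contradicting $\omega \ge c > \tfrac13$. Hence $\limsup_{z \to 0^+} \omega(z) = +\infty$. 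If $\omega$ is eventually monotone (decreasing in $z$) near $0$, then $\omega \to +\infty$ and the argument of part (a) again gives $z\omega \ge y_\ast \rho(z_1) \omega(z_1) z_1^2 > 0$ on some $(0, z_1)$. Otherwise $\omega$ has local maxima $w_n \to 0^+$; by the unboundedness step $\omega(w_n) \to \infty$ along a subsequence (else $\omega$ would be uniformly bounded between consecutive maxima), and at such a local max $\omega'(w_n) = 0$ combined with $\rho - \omega \le 1/(y_\ast w_n)$ from~\eqref{E:RHOAPRIORI} yields
\begin{equation*}
3 u_n - y_\ast w_n \le \frac{2 u_n^2}{1 - u_n^2}, \qquad u_n := y_\ast w_n \omega(w_n) \in (0, 1),
\end{equation*}
which forces $u_n \ge (\sqrt{10} - 1)/3 - O(w_n)$ and hence $\limsup_{z \to 0^+} z \omega(z) \ge (\sqrt{10} - 1)/(3 y_\ast) > 0$.

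The main technical obstacle is the strict propagation of $\rho < \omega$ all the way to $z = 0$ in part (a): one must rule out re-crossings of $\rho - \omega$ in the interior, which is handled by the fact that at any interior crossing one has $(\rho - \omega)' > 0$ (by the same evaluation as at $z_c$), forcing $\rho - \omega > 0$ immediately to the right---contradicting the definition of any such crossing as the supremum of the set where $\rho \le \omega$.
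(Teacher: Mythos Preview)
Your argument is correct in both parts, but the routes differ from the paper's in interesting ways.

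For part~(a), the setup (definition of $z_c$, evaluation of the derivatives there, and the dynamical trapping of $\{\rho<\omega,\ \omega>\tfrac13\}$ to the left) matches the paper exactly. The final contradiction is different: the paper simply integrates $\omega'\le(1-3\omega)/z$ on $(z,z_c)$ to get $z^3\omega(z)\ge z_c^3(\omega(z_c)-\tfrac13)>0$, so $z\omega(z)\ge c/z^2\to\infty$, directly contradicting $s(x_\ast)=0$. Your detour through two estimates on $\rho\omega z^2$ (one $\to 0$ via $\rho\le w_c$ and~\eqref{E:OMEGAPRIORI}, the other $\ge c>0$ via~\eqref{E:USEFUL} once $\omega>1$) is valid but longer; the paper's one-line integration is more direct.

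For part~(b), your approach is genuinely different and in places more explicit. The paper argues by contradiction that $z\omega\to0$: this bounds the nonlinear term in~\eqref{E:OMEGAEQN} by $C_\ast\epsilon\,\omega/z$, giving $\omega'\le(1-(3-C_\ast\epsilon)\omega)/z\le -c_\ast/z$, hence $\omega\to\infty$; a bootstrap then gives $\omega'\le-2\omega/z$, so $\omega\ge C/z^2$ and $z\omega\to\infty$, a contradiction. You instead first establish unboundedness of $\omega$ via the clean identity $(z^3\omega)'=z^2+z^3R$ (this step is neat and avoids the $\epsilon$-bookkeeping), then split into the monotone case (where~\eqref{E:USEFUL} plus~\eqref{E:RHOAPRIORI} give $z\omega\ge c y_\ast$ directly) and the oscillatory case, where analysing $\omega'(w_n)=0$ at local maxima yields the quantitative bound $\limsup z\omega\ge(\sqrt{10}-1)/(3x_\ast)$. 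Your oscillatory-case analysis is a nice alternative that produces an explicit constant; the paper's bootstrap is shorter but less quantitative. Both are complete.
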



\begin{proof}
{\em Proof of Part (a).}
If not let
\begin{align}
z_c : = \sup_{z\in(0,1)} \left\{\rho(\tau;x_\ast)-\omega(\tau;x_\ast)>0, \ \tau\in(z,1) , \ \ \rho(z;x_\ast)=\omega(z;x_\ast)\ \right\}>0.
\notag
\end{align}  
At $z_c$ we have from~\eqref{E:RHOEQN}--\eqref{E:OMEGAEQN} $\omega'(z_c;x_\ast) = \frac{1-3\omega}{z_c}<0$ and $\rho'(z_c;x_\ast)=0$.  Therefore there exists a neighbourhood strictly to the right of $z_c$ such that $\omega'<0$, $\rho<\omega$, and $\rho'>0$. It is easily checkes that this property is  dynamically trapped and we conclude 
\begin{align}
\omega'(z;x_\ast) \le \frac{1-3\omega(z;x_\ast)}{z} , \ \ z\le z_c. \label{E:OMEGAPRIME}
\end{align}
Integrating the above equation over $[z,z_c]$ we conclude 
\begin{align}
\omega(z;x_\ast) z^3 \ge \omega(z_c;x_\ast) z_c^3 - \frac13 z_c^3 = \left(\omega(z_c;x_\ast)-\frac13\right) z_c^3 =:c>0.\notag
\end{align}
In other words $\omega(z;x_\ast)z\ge \frac c{z^2}\gg 1$ for sufficiently small $z$, which implies  $s(x_\ast)>0$. A contradiction. 

\noindent
{\em Proof of Part (b).}
By way of contradiction we assume that $\lim_{z\to0}z\omega(z;x_\ast)=0$. For any $\epsilon>0$ choose $\delta>0$ so small that 
\[
1- x_\ast^2 z^2 \omega(z;x_\ast)^2>1-\epsilon^2, \ \ \text{ i. e. }z x_\ast \omega(z;x_\ast)<\epsilon, \ \ z\in(0,\delta).
\]
From~\eqref{E:OMEGAEQN} and \eqref{E:RHOAPRIORI} we then conclude
\begin{align}
\omega' & \le \frac{1-3\omega}z  + \frac{2 \epsilon x_\ast \omega }{1-\epsilon^2} \frac1{x_\ast z} =
  \frac{1-\left(3-C_\ast\epsilon\right)\omega}{z}, \label{E:OMEGAPRIME}
\end{align}
where $C_\ast =\frac2{1-\epsilon^2}$. Letting $i_\ast:=\inf_{z\in(0,1]}\omega(z;x_\ast)>\frac13$, we choose $\epsilon>0$ so small that 
\begin{align}
1-\left(3-C_\ast\epsilon\right)\omega(z;x_\ast) < 1-\left(3-C_\ast\epsilon\right) i_\ast < - c_\ast<0, \ \ c_\ast : = -\frac{1-3i_\ast}2.\notag
\end{align}
From~\eqref{E:OMEGAPRIME}
\begin{align}\label{E:OMEGAPRIME2}
\omega' \le -\frac{c_\ast}{z}, \ \ z\in(0,\delta). 
\end{align}
Therefore 
\[
\omega(z;x_\ast) = \omega (\delta;x_\ast) - \int_{z}^\delta \omega'(\tau;x_\ast)\,d\tau \ge  \omega (\delta;x_\ast) +c_\ast \log\frac{\delta}{z} \longrightarrow_{z\to 0} \infty.
\]
As a consequence of~\eqref{E:OMEGAPRIME}, for sufficiently small $\epsilon$ and $z\ll1$ we have
\begin{align}
\omega' \le -\frac{2\omega}{z}, \notag
\end{align}
which in turn implies $\omega(z;x_\ast)\ge C z^{-2}$ for sufficiently small $z$. Therefore, since $\omega(z;x_\ast)z >1$ for sufficiently small $z$ we conclude $s(\bar x_\ast)>0$, a contradiction to the assumption $s(x_\ast)=0$.
\end{proof}


We now recall Definition~\ref{D:ULS}, where the notion of an upper and a lower solution is introduced. The next lemma shows that we can find a lower solution at a point $0<z_0\ll1$ arbitrarily close to $z=0$.

\begin{lemma}[Existence of a lower solution]\label{L:LOWERSOLUTION}
There exists an $\eta>0$ such that for any $z_0<\eta$ there exists an $y_{\ast\ast}\in [\bar y_\ast,3]$ such that $(\rho(\cdot;y_{\ast\ast}),\omega(\cdot;y_{\ast\ast}))$ is a lower solution at $z_0$. Moreover, there exists a universal constant $C$ such that $\rho_1<\frac C{z_0}$, where $\rho_-(z_0;\rho_1)=\rho(z_0;y_{\ast\ast})$.
\end{lemma}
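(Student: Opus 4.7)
The plan is to select $y_{\ast\ast}\in Y$ satisfying the strict sandwich $s(y_{\ast\ast})<z_0<z_{\frac13}(y_{\ast\ast})$, and then to match densities by running $\rho_0$ through $\rho_-(z_0;\rho_0)$. Once the sandwich is in place, Lemma~\ref{L:PREPZ}(a) forces $\omega(z_0;y_{\ast\ast})<\tfrac13$ while Lemma~\ref{L:APRIORI} gives $\omega_-(z_0;\rho_1)>\tfrac13$ for any $\rho_1>\tfrac13$, so the inequality $\omega(z_0;y_{\ast\ast})<\omega_-(z_0;\rho_1)$ required by Definition~\ref{D:ULS} is automatic and the remaining task reduces to producing a matching $\rho_1$ with $\rho_-(z_0;\rho_1)=\rho(z_0;y_{\ast\ast})$ and $\rho_1<C/z_0$.

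The sandwich step rests on the continuous map $z_{\frac13}:Y\to(0,\infty)$ from Lemma~\ref{L:ZONETHIRDCONTINUITY}, together with the limits $z_{\frac13}(y)\to 0$ as $y\to\bar y_\ast^+$ and $z_{\frac13}(y)\to 1$ as $y\to 3^-$, the latter read off from the mean value argument in the proof of Lemma~\ref{L:TWOANDTHREE}(a). I will pick $\eta$ strictly less than $\sup_{y\in Y}z_{\frac13}(y)$; for $z_0<\eta$ the intermediate value theorem applied to $z_{\frac13}$ supplies $y^\ast:=\sup\{y\in Y:z_{\frac13}(y)=z_0\}$ with $z_{\frac13}(y^\ast)=z_0$, and Remark~\ref{R:SETY} then supplies the \emph{strict} inequality $s(y^\ast)<z_{\frac13}(y^\ast)=z_0$. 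The definition of $y^\ast$ combined with $z_{\frac13}\to 1$ at $3$ and a second use of the intermediate value theorem rules out $z_{\frac13}<z_0$ on a right half-neighbourhood of $y^\ast$, so $z_{\frac13}>z_0$ there; upper semi-continuity of $s$ (Proposition~\ref{P:LOWERSEMICONT}(a)) simultaneously keeps $s<z_0$ close to $y^\ast$, and any $y_{\ast\ast}$ in the intersection of these two one-sided neighbourhoods delivers the sandwich.

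For the density match I will exploit the continuous map $\rho_0\mapsto\rho_-(z_0;\rho_0)$. Continuous dependence on the Friedman data shows $\rho_-(z_0;\rho_0)\to\tfrac13$ as $\rho_0\downarrow\tfrac13$, so this quantity falls strictly below $\rho(z_0;y_{\ast\ast})$ for $\rho_0$ close enough to $\tfrac13$; note that $\rho(z_0;y_{\ast\ast})>\rho(1;y_{\ast\ast})=1/y_{\ast\ast}\ge\tfrac13$ since~\eqref{E:RHOEQN} and Lemma~\ref{L:PREPZ}(a) make $\rho(\cdot;y_{\ast\ast})$ strictly decreasing on $(s(y_{\ast\ast}),1]$. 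At the upper end I set $\rho_0^{\max}:=\tfrac{\sqrt 3}{\sqrt 2\, y_{\ast\ast} z_0}$; Lemma~\ref{L:RHOMINUS} then produces $\rho_-(z_0;\rho_0^{\max})\ge \rho_0^{\max}e^{-1/\rho_0^{\max}}>\tfrac{1}{y_{\ast\ast}z_0}>\rho(z_0;y_{\ast\ast})$ once $\eta$ is small enough to make $e^{-1/\rho_0^{\max}}>\sqrt{2/3}$, the last comparison being Lemma~\ref{L:APRIORIODE}. The intermediate value theorem then yields the required $\rho_1\in(\tfrac13,\rho_0^{\max}]$, and $\rho_1\le\rho_0^{\max}\le C/z_0$ follows with a universal $C$ from the bound $y_{\ast\ast}\ge 2$.

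The main obstacle is the sandwich: as $y\to\bar y_\ast^+$ both $s(y)$ and $z_{\frac13}(y)$ collapse to $0$ at a priori uncontrolled relative rates, so approaching $\bar y_\ast$ from above does not on its own pin $z_0$ strictly between them. The construction sidesteps this by first moving to the crossing level $z_{\frac13}(y^\ast)=z_0$, where Remark~\ref{R:SETY} supplies a \emph{strict} gap $s(y^\ast)<z_0$, and then perturbing to a nearby $y_{\ast\ast}$ via upper semi-continuity of $s$ in tandem with continuity of $z_{\frac13}$.
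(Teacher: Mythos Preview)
Your argument is essentially correct and shares the same skeleton as the paper's, but you insert an unnecessary perturbation step that in turn creates a small gap you do not close.

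The paper stops at your $y^\ast$: it simply takes $y_{\ast\ast}$ with $z_{\frac13}(y_{\ast\ast})=z_0$ \emph{exactly} (via the continuous function $F(y_\ast)=\sup_{\tilde y_\ast\in[\bar y_\ast,y_\ast]}z_{\frac13}(\tilde y_\ast)$, whose range is $[0,\eta]$). Then $\omega(z_0;y_{\ast\ast})=\tfrac13$ on the nose, and since $\omega_-(z_0;\rho_1)>\tfrac13$ strictly for any $\rho_1>\tfrac13$ by~\eqref{E:APRIORI1}, the lower-solution inequality $\omega(z_0;y_{\ast\ast})<\omega_-(z_0;\rho_1)$ already holds. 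There is no need to perturb to a nearby point with $z_0<z_{\frac13}(y_{\ast\ast})$. This also makes the bound $\rho(z_0;y_{\ast\ast})>\tfrac13$ immediate from~\eqref{E:WLESSTHANRZ}: $\rho(z_0)>\omega(z_0)=\tfrac13$. The density-matching step is then identical to yours.

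Your assertion that $\rho(\cdot;y_{\ast\ast})$ is strictly decreasing on all of $(s(y_{\ast\ast}),1]$ is not justified as written: from~\eqref{E:RHOEQN} the sign of $\rho'$ is that of $-\omega(\rho-\omega)$, and on $(s(y_{\ast\ast}),z_{\frac13}(y_{\ast\ast}))$ Lemma~\ref{L:PREPZ}(a) only gives $\omega<\tfrac13$ and $\rho>\omega$, not $\omega>0$. This can be repaired by taking $y_{\ast\ast}$ close enough to $y^\ast$ so that, by Proposition~\ref{P:LOWERSEMICONT}(b), $\omega(z_0;y_{\ast\ast})$ is close to $\omega(z_0;y^\ast)=\tfrac13$ and hence positive; combined with $\omega'>0$ on $(s,z_{\frac13})$ (from the trapping in the proof of Lemma~\ref{L:PREPZ}) this forces $\omega>0$ on $[z_0,1]$. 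But the cleaner fix is to drop the perturbation and work at $y^\ast$ directly.
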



\begin{proof}
For any $y_\ast\in Y$ we consider the function 
\begin{align}\label{E:BIGFDEF}
F(y_\ast) : = \sup_{\tilde y_\ast\in[\bar y_\ast, y_\ast]} \left\{z_{\frac13}(\tilde y_\ast)\right\}. 
\end{align}
The function $y_\ast\mapsto F(y_\ast)$ is clearly increasing, continuous,  and by Lemma~\ref{L:ZONETHIRDCONTINUITY} $\lim_{y_\ast\to\bar y_\ast}F(y_\ast)=0$.  Therefore, the range of $F$ is of the form $[0,\eta]$ for some $\eta>0$.
For any $y_\ast\in Y$, by Lemma~\ref{L:ZONETHIRDCONTINUITY}, the supremum in~\eqref{E:BIGFDEF} is attained, i.e. there exists $y_{\ast\ast}\in [\bar y_\ast, y_\ast]$ such that $F(y_\ast) = z_{\frac13}(y_{\ast\ast})=:z_0$. Therefore, for any 
$\bar y_\ast < y_\ast< y_{\ast\ast}$ we have
\begin{align}
s(y_\ast)<z_{\frac13}(y_\ast)\le z_0.\notag
\end{align}
Due to~ \eqref{E:WLESSTHANRZ} we have the bound $\rho(z_0;y_{\ast\ast})>\omega(z_0;y_{\ast\ast})=\frac13$. By Lemma~\ref{L:RHOMINUS} choosing $\rho_0 = \rho_0(z_0) = \frac{\sqrt 3}{\sqrt 2 y_{\ast\ast} z_0}>1$ 
we have
\begin{align}
\rho_-(z_0;\rho_0) >\frac1{y_{\ast\ast}z_0}>\rho(z_0;y_{\ast\ast}),\notag
\end{align}
where we have used Lemma~\ref{L:APRIORIODE} in the last bound.
On the other hand $\rho_-(z_0;\frac13)=\frac13<\rho(z_0;y_{\ast\ast})$ (where we recall that $\rho_-(\cdot;\frac 13)$ is the Friedman solution, see Remark~\ref{R:FRIEDMAN}). Using Remark~\ref{R:COMMONINTERVAL} and the Intermediate Value Theorem, there exists a $\rho_1\in(\frac13,\rho_0)$ such that 
\begin{align}
\rho(z_0;y_{\ast\ast}) = \rho_-(z_0;\rho_1).\notag
\end{align}
By~\eqref{E:APRIORI1} $\omega_-(z_0;\rho_1)>\frac13 = \omega(z_0;y_{\ast\ast})$ and therefore $(\rho(\cdot,y_{\ast\ast}),\omega(\cdot;y_{\ast\ast}))$ is a lower solution at $z_0$. The upper bound on $\rho_1$ follows from our choice of $\rho_0$.
\end{proof}


The most delicate argument in this section is the following lemma, which states that $(\rho(\cdot;\S), \omega(\cdot;\S))$ is an upper solution at some $z_0\ll1$ if $\lim_{z\to0}\omega(z;\S)\neq\frac13$.

\begin{lemma}\label{L:CONTR}
If 
\begin{align}
\lim_{z\to0} \omega(z;\bar y_\ast) \neq \frac13,\notag
\end{align}
then there exists  a universal constant $C$ and an arbitrarily small  $z_0>0$ such that $(\rho(\cdot;\bar y_\ast),\omega(\cdot;\bar y_\ast))$ is an upper solution at $z_0$ and $\rho_1<\frac C{z_0}$, where $\rho_-(z_0;\rho_1)=\rho(z_0;\bar y_\ast)$.
\end{lemma}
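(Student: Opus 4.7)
The plan is to exploit the very different rates at which the two flows depart from the Friedman value $\tfrac13$ near $z=0$: $\omega_-(\cdot;\rho_1)$ starts at $\tfrac13$ exactly and grows at most polynomially, whereas the hypothesis forces $\omega(\cdot;\bar y_\ast)$ to stay bounded above $\tfrac13$ along some sequence of arbitrarily small $z$'s.

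Since $\liminf_{z\to 0}\omega(z;\bar y_\ast)\ge \tfrac13$ (from the minimality of $\bar y_\ast$) and $\lim_{z\to 0}\omega(z;\bar y_\ast)\neq \tfrac13$, we have $\limsup_{z\to 0}\omega(z;\bar y_\ast)>\tfrac13$, so there exist $\delta>0$ and a sequence $z_n\to 0^+$ with $\omega(z_n;\bar y_\ast)\ge \tfrac13+2\delta$. Combining $s(\bar y_\ast)=0$ (Proposition~\ref{P:EXISTENCEY}), the fact that $\omega(z;\bar y_\ast)>\tfrac13$ on $(0,1)$ (which follows from $z_{\frac13}(\bar y_\ast)=0$, Lemma~\ref{L:ZONETHIRDCONTINUITY}), and the argument of Lemma~\ref{L:XIMP}(a), I infer $\rho(z;\bar y_\ast)>\omega(z;\bar y_\ast)>\tfrac13$ on $(0,1)$. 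I then fix $z_0=z_n$ arbitrarily small, subject to further smallness conditions below.

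To locate $\rho_1$, recall from Lemma~\ref{L:APRIORIODE} that $\rho(z_0;\bar y_\ast)<\tfrac{1}{\bar y_\ast z_0}$. Set $\rho^\sharp:=\sqrt{3/2}/(\bar y_\ast z_0)$, which by~\eqref{E:ZNODDEF} satisfies $z_0=z_0(\rho^\sharp)$ whenever $\rho^\sharp>1$. For $z_0$ small enough that $\rho^\sharp$ exceeds the threshold $R$ from Lemma~\ref{L:RHOMINUS}, that lemma yields $\rho_-(z_0;\rho^\sharp)>\tfrac{1}{\bar y_\ast z_0}>\rho(z_0;\bar y_\ast)$. Since $\rho_-(z_0;\tfrac13)=\tfrac13<\rho(z_0;\bar y_\ast)$ (the Friedman solution, Remark~\ref{R:FRIEDMAN}) and $\rho_0\mapsto\rho_-(z_0;\rho_0)$ is continuous by standard ODE dependence, the Intermediate Value Theorem furnishes $\rho_1\in(\tfrac13,\rho^\sharp)$ with $\rho_-(z_0;\rho_1)=\rho(z_0;\bar y_\ast)$, and hence $\rho_1<\rho^\sharp<C/z_0$ with the universal constant $C=\sqrt{3/2}/2$.

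To compare $\omega_-(z_0;\rho_1)$ with $\omega(z_0;\bar y_\ast)$, I set $u:=\omega_--\tfrac13$, so that~\eqref{E:OMEGAEQN} rearranges to
\[
(z^3 u)'=\frac{2\bar y_\ast^2 z^4\omega_-^2(\rho_--\omega_-)}{1-\bar y_\ast^2 z^2\omega_-^2}.
\]
A bootstrap from the ODE itself shows $\omega_-\le 1$ on $[0,z_0]$: if $\omega_-=1$ at a first point $z^\star\in(0,z_0]$ then $\omega_-'(z^\star)\ge 0$ forces $\bar y_\ast^2(z^\star)^2\rho_-(z^\star)\ge 1$, which conflicts with $\bar y_\ast^2(z^\star)^2\rho_-(z^\star)\le \bar y_\ast^2 z_0^2\rho_1<\bar y_\ast^2 Cz_0$ once $z_0$ is sufficiently small. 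Using $\omega_-\le 1$, $\rho_-\le\rho_1$ (Lemma~\ref{L:APRIORI}) and $1-\bar y_\ast^2 z^2\omega_-^2\ge\tfrac12$ on $[0,z_0]$ (by the same smallness), integration of the displayed identity from $0$ gives $|u(z)|\le C_1\bar y_\ast^2\rho_1 z^2$ and hence $\omega_-(z_0;\rho_1)-\tfrac13\le C_2 z_0$. Shrinking $z_0$ once more so that $C_2 z_0<\delta$ yields $\omega_-(z_0;\rho_1)<\tfrac13+\delta<\omega(z_0;\bar y_\ast)$, which together with $\rho_-(z_0;\rho_1)=\rho(z_0;\bar y_\ast)$ realises the upper-solution property of Definition~\ref{D:ULS} at $z_0$, with $\rho_1<C/z_0$ as required. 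The main obstacle is the closing estimate: the cruder $\omega_-^2\le\rho_1/3$ from Lemma~\ref{L:APRIORI} would contribute a factor of $\rho_1\sim 1/z_0$ in place of $1$ in the displayed integral, leaving an $O(1)$ (not $o(1)$) bound for $u(z_0)$ and destroying the comparison; the $\omega_-\le 1$ a priori bound is what rescues the argument, and it in turn rests delicately on the mild upper bound $\rho_1<C/z_0$ established in the preceding step.
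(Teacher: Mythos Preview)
Your proof is correct and takes a genuinely different, more unified route than the paper's. The paper splits into three cases according to the behaviour of $\omega(\cdot;\bar y_\ast)$ near $z=0$: (i) $\liminf>\tfrac13$, (ii) $\tfrac13<\limsup<\infty$ with $\liminf=\tfrac13$, (iii) $\limsup=\infty$ with $\liminf=\tfrac13$. In Cases (i) and (iii) it shows that $\omega(z_n;\bar y_\ast)\gtrsim 1/z_n$ along a suitable sequence (via Lemma~\ref{L:XIMP}(b) or a critical-point argument), and then beats this against the cruder bound $\omega_-(z_n;\rho_1)\lesssim\sqrt{\rho_1}\lesssim z_n^{-1/2}$ coming from $\omega_-^2\le\rho_1/3$. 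Only in Case (ii), where $\rho$ stays bounded so that $\rho_1$ can be taken in a fixed compact set, does the paper obtain the linear bound $\omega_-(z_n;\rho_1)\le\tfrac13+Cz_n$.

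You collapse all three cases by proving the stronger a priori bound $\omega_-\le 1$ on $[0,z_0]$ via the bootstrap at a hypothetical first crossing, exploiting precisely the mild control $\rho_1<C/z_0$. With $\omega_-\le 1$ in hand, the estimate $u(z_0)=\omega_-(z_0)-\tfrac13\lesssim \rho_1 z_0^2\lesssim z_0$ closes against any fixed gap $\delta>0$, so you never need $\omega(z_n;\bar y_\ast)$ to blow up. This is cleaner and avoids the separate blow-up analyses of Cases (i) and (iii); the trade-off is that your argument hinges on the extra dynamical observation $\omega_-\le 1$, which the paper does not isolate. One minor remark: your appeal to ``the argument of Lemma~\ref{L:XIMP}(a)'' for $\rho>\omega$ is fine, but note that Lemma~\ref{L:PREPZ}(a) already covers the case $\bar y_\ast\in\Z$ directly, and $\bar y_\ast\notin\Y$ since $z_{\frac13}(\bar y_\ast)=0$.
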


\begin{proof}
It is clear that $\liminf_{z\to0}\omega(z;\bar y_\ast)\ge\frac13$ as otherwise we would have $\bar y_\ast\in Y$, a contradiction to the definition~\eqref{E:BARYDEF} of $\bar y_\ast$ and the openness of $Y$.
We distinguish three cases.

\noindent
{\em Case 1.}
\begin{align}
\liminf_{z\to 0} \omega(z;\bar y_\ast) >\frac13.\notag
\end{align}
In this case $\bar y_\ast\in \X$ and by Proposition~\ref{P:EXISTENCEY} we have $s(\bar y_\ast)=0$.
By part (b) of Lemma~\ref{L:XIMP} there exists a constant $C>0$ and a sequence $\{z_n\}_{n\in\mathbb N}\subset(0,1)$ such that $\lim_{n\to\infty}z_n=0$ and  
\be\label{E:CRUCIAL00}
\omega(z_n;\bar y_\ast)>\frac C{\bar y_\ast z_n}.
\ee
For any such $z_n$ we have by part (a) of Lemma~\ref{L:XIMP} and Lemma~\ref{L:APRIORIODE}
\begin{align}\label{E:CONSTANTC}
\rho(z_n;\bar y_\ast)>\omega(z_n;\bar y_\ast) >\frac{C}{\bar y_\ast z_n} > C \rho(z_n;
\bar y_\ast).
\end{align}

For any $0<z_n\ll1$ sufficiently small consider $(\rho_-(\cdot;\rho^n_0),\omega_-(\cdot;\rho^n_0))$ with $\rho^n_0=\rho_0(z_n)= \frac{\sqrt 3 }{\sqrt 2 \bar y_\ast z_n}>1$. By Lemmas~\ref{L:RHOMINUS} and~\ref{L:XIMP}
\be
\rho_-(z_n;\rho^n_0)> \frac1{\bar y_\ast z_n}>\rho(z_n;\bar y_\ast)>\omega(z_n;\bar y_\ast)>\frac13.\notag 
\ee
On the other hand,
\begin{align}
\rho(z_n;\bar y_\ast)>\frac13 = \rho_-(z_n;\frac13),\notag
\end{align}
where we recall that $\rho_-(\cdot;\frac13)\equiv\frac13$ is the Friedman solution.
Moreover, by Remark~\ref{R:COMMONINTERVAL} $[0,z_n]\subset[0,s_-(\tilde \rho_0))$ for all $\tilde \rho_0\subset[\frac13,\rho^n_0]$. By the continuity of the map 
$[\frac13,\rho^n_0]\ni \tilde \rho_0\mapsto \rho_-(z_n;\tilde \rho_0)$  the Intermediate Value Theorem implies that there exists  $\rho^n_1\in(\frac13,\rho^n_0)$ such that 
\begin{align}\label{E:EQUALITY}
\rho_-(z_n;\rho_1^n) = \rho(z_n;\bar y_\ast) \ \ \text{ for all sufficiently large $n\in\mathbb N$}.
\end{align}
Let 
\begin{align}
c^n_1: =
\begin{cases}
 \exp\left(- \left(\rho^n_1\right)^{-1}\right), & \text{ if }\ \rho^n_1> 1; \\
 \exp\left(-1 \right),  &  \text{ if }\ \frac13 <\rho^n_1 \le1. \notag
\end{cases}
\end{align}
Clearly $c_1^n\ge e^{-1}=:c_1$ for all $n\in\mathbb N$.
By Lemma~\ref{L:RHOMINUS} $\rho_-(z_n;\rho^n_1)\ge c_1\rho^n_1$. Since $\omega_-(z_n;\rho^n_1)< \left(\frac{\rho^n_1}{3}\right)^{\frac12}$, we conclude together with~\eqref{E:CONSTANTC} and~\eqref{E:EQUALITY} that for all $n$ sufficiently large
\begin{align}\label{E:UPPERONE}
\omega_-(z_n;\rho^n_1)<\frac{1}{\sqrt{3c_1}}\rho_-(z_n;\rho^n_1)^{\frac12} = \frac{1}{\sqrt{3c_1}}\rho(z_n;\bar y_\ast)^{\frac12} 
\le \frac{1}{\sqrt{3c_1 C}} \omega(z_n;\bar y_\ast)^{\frac12}. 
\end{align}
By~\eqref{E:CONSTANTC} $\omega(z_n;\bar y_\ast)$ grows to positive infinity as $z_n$ approaches zero. Therefore, we may choose a sufficiently large $N\in\mathbb N$ and set $z_0=z_N\ll1$, $\rho_0=\rho^N_0$, $\rho_1=\rho_1^N$ so that 
$ \frac{1}{\sqrt{3c_1 C}} \omega(z_0;\bar y_\ast)^{\frac12}<\omega(z_0;\bar y_\ast)$. Together with~\eqref{E:UPPERONE} this gives
\begin{align}
\omega_-(z_0;\rho_1) < \omega(z_0;\bar y_\ast).\notag
\end{align}
We conclude that $(\rho(\cdot;\bar y_\ast), \omega(\cdot;\bar y_\ast))$ is an upper solution (see Definition~\ref{D:ULS}) at $z_0$ and the upper bound on $\rho_1$ follows from our choice of $\rho_0$.


\noindent
{\em Case 2.}
\begin{align}\label{E:CONTR2}
\frac13<\limsup_{z\to 0} \omega(z;\bar y_\ast) <\infty, \ \ \liminf_{z\to 0}\omega(z;\bar y_\ast)=\frac13.
\end{align}
In particular $\bar y_\ast\in \Z$ (see~\eqref{E:ZDEF}) and by Lemma~\ref{L:PREPZ} 
$\rho(z;\bar y_\ast)>\omega(z;\bar y_\ast)$.
Assumption~\eqref{E:CONTR2}  also implies that there exists a constant $c>0$ independent of $z$ such that 
\begin{align}\label{E:OMEGAUNIFORMBOUND}
\omega(z;\bar y_\ast)<c, \ \ z\in(0,1].
\end{align}
From~\eqref{E:RHOEQN} and the bound~\eqref{E:RHOAPRIORI} 
we conclude
\begin{align}
\rho'(z;\bar y_\ast) \ge - C \rho(z;\bar y_\ast), \notag 
\end{align}
or equivalently $\left(\rho e^{Cz}\right)'\ge0$; here $C>0$.  This implies the boundedness of $\rho(\cdot;\bar y_\ast)$, i.e. 
\begin{align}\label{E:RHOUNIFORMBOUND}
\rho(z;\bar y_\ast)<c, \ \ z\in(0,1],
\end{align}
where we have (possibly) enlarged $c$ so that~\eqref{E:OMEGAUNIFORMBOUND} and~\eqref{E:RHOUNIFORMBOUND} are both true. There exists an $\eta>0$ and a sequence $\left\{z_n\right\}_{n\in\mathbb N}$ such that $\lim_{n\to\infty}z_n=0$
and 
\[
\frac13 + \eta < \omega(z_n;\bar y_\ast), \ \ \text{ and } \ \ \lim_{n\to \infty} \omega(z_n;\bar y_\ast) = \limsup_{z\to 0} \omega(z;\bar y_\ast).
\] 
Since $\{ \rho(z_n;\bar y_\ast)\}_{n\in\mathbb N}$ is bounded, by Lemma~\ref{L:RHOMINUS} we can choose a $\rho_0>1$ such that 
$
\rho_-(z_n;\rho_0) > \rho(z_n;\bar y_\ast)
$
for all $n\in\mathbb N$. 
On the other hand $\rho(z_n;\bar y_\ast)>\frac13=\rho_-(z_n;\frac13)$. By the intermediate value theorem there exists a 
sequence $\{\rho_0^n\}_{n\in\mathbb N}\subset (\frac13,\rho_0)$ such that
\begin{align}
\rho_-(z_n;\rho_0^n) = \rho(z_n;\bar y_\ast).\notag
\end{align}
Since $\omega_-(z;\rho_0^n)^2\le \frac{\rho_0^n}{3}<\frac{\rho_0}{3}$ and $\rho_-(z;\rho_0^n)<\rho_0^n<\rho_0$ (Lemma~\ref{L:APRIORI}) we conclude from~\eqref{E:RHOEQN}--\eqref{E:OMEGAEQN} and Theorem~\ref{T:ANALYTICITYLEFT} that $\lv \rho_-'(z_n;\rho_0^n) \rv$ and $\lv \omega_-'(z_n;\rho_0^n)\rv$ are bounded uniformly-in-$n$, by some constant, say $C$. 
Therefore
\begin{align}
\omega_-(z_n;\rho_0^n) \le \frac13 + C z_n.\notag
\end{align}
We thus conclude that for a fixed $n$ sufficiently large $\omega_-(z_n;\rho_0^n)<\frac13 + \eta <\omega(z_n;\bar y_\ast)$. Therefore, $\omega(\cdot;\bar y_\ast)$ is an upper solution (see Definition~\ref{D:ULS}) at $z_0:=z_n$ with $\rho_1=\rho_0^n$.
The claimed upper bound on $\rho_1$ is clear.

\noindent
{\em Case 3.}
\begin{align}
\frac13<\limsup_{z\to 0} \omega(z;\bar y_\ast) =\infty, \ \ \liminf_{z\to0}\omega(z;\bar y_\ast)=\frac13.\notag
\end{align}
As $\omega(\cdot;\bar y_\ast)$ must oscillate between $\frac13$ and $\infty$ we can use the mean value theorem to conclude that there exists a sequence $\left\{z_n\right\}_{n\in\mathbb N}$ such that $\lim_{n\to\infty}z_n=0$ and
\begin{align}\label{E:CRUCIAL0}
\omega(z_n;\bar y_\ast)>n, \ \ \text{ and } \ \ \omega'(z_n;\bar y_\ast)=0.
\end{align}
We claim that 
there exist $N_0>0$ and $0<\eta\ll1$ such that
\begin{align}\label{E:CRUCIAL}
\omega(z_n;\bar y_\ast)\ge  \frac{r-\eta}{\bar y_\ast z_n}, \ \ n\ge N_0,
\end{align}
where $r<1$ is the positive root of the quadratic polynomial $3x^2+2x-3$.  To prove this, assume that~\eqref{E:CRUCIAL} is not true. 
Then there exists a subsequence of $\{z_n\}_{n\in\mathbb N}$ such that $\omega(z_n;\bar y_\ast)< \frac{r-\eta}{\bar y_\ast z_n}$ and therefore
\begin{align}
\frac{2\bar y_\ast z_n\omega(z_n;\bar y_\ast)}{1 - (\bar y_\ast z_n\omega(z_n;\bar y_\ast))^2} < 3 - C(\eta),\notag
\end{align}
for some $C(\eta)>0$.
Since $\rho-\omega<\frac1{\bar y_\ast z}$ by~\eqref{E:RHOAPRIORI} we have
\begin{align}
\omega'(z_n;\bar y_\ast)& <\frac{1-3\omega(z_n;\bar y_\ast)}{z_n} + \frac{2\bar y_\ast z_n\omega(z_n;\bar y_\ast)}{1 - (\bar y_\ast z_n\omega(z_n;\bar y_\ast))^2}\frac{\omega(z_n;\bar y_\ast)}{z_n} \notag \\
& = \frac{\omega(z_n;\bar y_\ast)} {z_n}\left(\frac1{\omega(z_n;\bar y_\ast)}-3 +  \frac{2\bar y_\ast z_n\omega(z_n;\bar y_\ast)}{1 - (\bar y_\ast z_n\omega(z_n;\bar y_\ast))^2} \right) \notag \\
& <  (\frac1{\omega(z_n;\bar y_\ast)} -C(\eta)) \frac{\omega(z;\bar y_\ast)}{z} <0 \ \ \text{ for $n$ sufficiently large}.\notag
\end{align}
This is a contradiction to~\eqref{E:CRUCIAL0}. 
We can therefore repeat the same argument following~\eqref{E:CRUCIAL00} to conclude that $\omega(\cdot;\bar y_\ast)$ is an upper solution at $z_0:=z_n$, for some $n$ sufficiently large. The upper bound on $\rho_1$ follows in the same way.
\end{proof}

We now use a continuity argument to show the following key proposition.

\begin{proposition}\label{P:ONETHIRD}
The limit $\lim_{z\to0}\omega(z;\bar y_\ast)$ exists and
\be
\lim_{z\to0}\omega(z;\bar y_\ast) = \frac13.\notag
\ee
\end{proposition}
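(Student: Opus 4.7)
I will argue by contradiction, following the three-step strategy (Upper solution / Lower solution / Intersection) outlined in the Methodology section.

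Suppose that $\lim_{z\to 0^+}\omega(z;\bar y_\ast)\neq \tfrac13$. I apply Lemma~\ref{L:CONTR} to extract a sequence $z_0\to 0^+$ along which $(\rho(\cdot;\bar y_\ast),\omega(\cdot;\bar y_\ast))$ is an upper solution with matching point $\rho_1^{\mathrm{up}}\in(\tfrac13, C/z_0)$. By further choosing $z_0$ in the regime of Lemma~\ref{L:LOWERSOLUTION}, I simultaneously obtain $y_{\ast\ast}\in(\bar y_\ast,3]$ such that $(\rho(\cdot;y_{\ast\ast}),\omega(\cdot;y_{\ast\ast}))$ is a lower solution at the same $z_0$, with $\rho_1^{\mathrm{low}}\in(\tfrac13, C/z_0)$. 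Lemma~\ref{L:ZONETHIRDCONTINUITY} forces $y_{\ast\ast}\to \bar y_\ast$ as $z_0\to 0^+$; combined with Proposition~\ref{P:EXISTENCEY} ($s(\bar y_\ast)=0$) and the upper semi-continuity of $s(\cdot)$ from Proposition~\ref{P:LOWERSEMICONT}, I may arrange $s(y_\ast)<z_0$ uniformly for $y_\ast\in[\bar y_\ast, y_{\ast\ast}]$. Moreover, by the construction in Lemma~\ref{L:LOWERSOLUTION}, $z_0 = z_{1/3}(y_{\ast\ast})$ and $z_{1/3}(y_\ast)\le z_0$ on $[\bar y_\ast,y_{\ast\ast}]$, so Lemma~\ref{L:PREPZ}(a) yields $\rho(z_0;y_\ast)>\omega(z_0;y_\ast)\ge \tfrac13$ throughout this interval.

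Next I build the continuous deformation $y_\ast\mapsto \rho_0(y_\ast)$. Because every candidate $\rho_0$-value satisfies $\rho_0<C/z_0$ (which follows from $\rho_-(z_0;\rho_0)=\rho(z_0;y_\ast)<1/(y_\ast z_0)$ combined with the lower bound on $\rho_-$ from Lemma~\ref{L:RHOMINUS}), the smallness requirement $z_0\le\eta\rho_0^{-3/4}$ of Lemma~\ref{L:MONOTONE} holds uniformly on $[\tfrac13,C/z_0]$ provided $z_0^{1/4}\lesssim \eta C^{-3/4}$; under this assumption $\rho_0\mapsto \rho_-(z_0;\rho_0)$ is strictly increasing, and Lemma~\ref{L:RHOMINUS} simultaneously guarantees $s_-(\rho_0)>z_0$ on the relevant range. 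Coupled with continuity of $y_\ast\mapsto \rho(z_0;y_\ast)$ from Proposition~\ref{P:LOWERSEMICONT}(b), this produces a continuous map $y_\ast\mapsto \rho_0(y_\ast)$ satisfying $\rho(z_0;y_\ast)=\rho_-(z_0;\rho_0(y_\ast))$ with boundary values $\rho_0(\bar y_\ast)=\rho_1^{\mathrm{up}}$ and $\rho_0(y_{\ast\ast})=\rho_1^{\mathrm{low}}$. Setting $\phi(y_\ast):=\omega(z_0;y_\ast)-\omega_-(z_0;\rho_0(y_\ast))$, the upper and lower solution properties give $\phi(\bar y_\ast)>0>\phi(y_{\ast\ast})$, so the Intermediate Value Theorem yields $y_\ast^\star\in(\bar y_\ast,y_{\ast\ast})$ with $\phi(y_\ast^\star)=0$.

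At $y_\ast^\star$ the right LP-type flow and $(\rho_-,\omega_-)(\cdot;\rho_0(y_\ast^\star))$ take the same value at $z_0$, and the ODE~\eqref{E:RHOEQN}--\eqref{E:OMEGAEQN} is non-singular there; ODE uniqueness (Lemma~\ref{L:LOCALEX}) propagates this equality throughout the overlap $(0,z_0]$, forcing $s(y_\ast^\star)=0$ and, by Lemma~\ref{L:APRIORI}, $\omega(z;y_\ast^\star)>\tfrac13$ for all $z\in(0,z_0]$. However $y_\ast^\star\in(\bar y_\ast,3]\subset Y\subset \mathcal Y$ (using that $Y$ is a final segment of $[2,3]$ by construction), hence $z_{1/3}(y_\ast^\star)>0$, and Lemma~\ref{L:PREPZ} then forces $\omega(z;y_\ast^\star)<\tfrac13$ on the non-empty interval $(0,z_{1/3}(y_\ast^\star))\cap(0,z_0]$. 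This contradiction completes the proof.

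The principal obstacle is engineering the choice of $z_0$ to satisfy four layers of smallness simultaneously: (i) $z_0$ must lie along the specific sequence of Lemma~\ref{L:CONTR}; (ii) $z_0<\eta_{\mathrm{LS}}$ from Lemma~\ref{L:LOWERSOLUTION}; (iii) $z_0^{1/4}\lesssim \eta C^{-3/4}$ so that Lemma~\ref{L:MONOTONE} applies uniformly over the full candidate range $[\tfrac13,C/z_0]$ (and hence also $s_-(\rho_0)>z_0$); and (iv) $z_0$ small enough that $s(y_\ast)<z_0$ uniformly on $[\bar y_\ast,y_{\ast\ast}]$. The careful tracking of the sharp bound $\rho_1<C/z_0$ in Lemmas~\ref{L:LOWERSOLUTION} and~\ref{L:CONTR} is precisely what renders (iii) consistent with the other conditions; once this bookkeeping is in hand, the remainder is a clean IVT argument followed by an ODE-uniqueness contradiction with the definition of $\mathcal Y$.
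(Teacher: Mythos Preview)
Your proof is correct and follows essentially the same approach as the paper: contradiction via Lemmas~\ref{L:CONTR} and~\ref{L:LOWERSOLUTION} to produce an upper and a lower solution at a common $z_0$, invocation of the monotonicity Lemma~\ref{L:MONOTONE} to invert $\rho_0\mapsto\rho_-(z_0;\rho_0)$ and build a continuous comparison map, then IVT to locate $y_\ast^\star\in(\bar y_\ast,y_{\ast\ast})\subset Y$ whose flow coincides with a left solution, contradicting~\eqref{E:WLESSTHANTHIRDZ}. Your write-up is in fact more careful than the paper's in two places---you explicitly verify that $\rho(z_0;y_\ast)$ lies in the range of $\rho_-(z_0;\cdot)$ for every $y_\ast\in[\bar y_\ast,y_{\ast\ast}]$ (so that $\rho_0(y_\ast)$ is well-defined), and you spell out the ODE-uniqueness step that glues the right and left flows---but the architecture is identical.
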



\begin{proof}
Assume that the claim is  not true. By Lemmas~\ref{L:LOWERSOLUTION} and~\ref{L:CONTR} we can find a $0<z_0\ll1$ and $y_{\ast\ast}\in Y$ so that $(\rho(\cdot;y_{\ast\ast}),\omega(\cdot;y_{\ast\ast}))$  and $(\rho(\cdot;\bar y_\ast),\omega(\cdot;\bar y_\ast))$   are respectively a lower and an upper solution at $z_0$.
Without loss of generality let 
\[
A: =\rho(z_0;y_{\ast\ast})< \rho(z_0;\bar y_\ast)=:B.
\]
By Lemmas~\ref{L:LOWERSOLUTION} and~\ref{L:CONTR} there exist $\rho_A,\rho_B>\frac13$ such that $A=\rho_-(z_0;\rho_A)$, $B=\rho_-(z_0;\rho_B)$, and
$\rho_A,\rho_B\in(\frac13,\rho_0)$, where $\rho_0\gg1$ and $z_0 \le C\frac1{\rho_0}$. Therefore by Lemma~\ref{L:MONOTONE}, $\pa_{\rho_0}\rho_-(z_0;\tilde\rho_0)>0$ for all $\tilde\rho_0\in[\frac13,\rho_0]$, since $\rho_0^{-\frac34}\gg \rho_0^{-1}$ for $\rho_0$ large. By the inverse function theorem, there exists a continuous function $\tau\mapsto f(\tau)$ such that 
\begin{align}
\rho_-(z_0;f(\tau)) & = \tau,  \ \ \tau\in[A,B] \notag\\
f(\rho_A) & = A.\notag
\end{align} 
By strict monotonicity of $\tilde\rho_0\mapsto \rho_-(z_0;\tilde\rho_0)$ on $(0,\rho_0]$ the inverse $f$ is in fact injective and therefore $f(\rho_B)=B$. 
We consider the map 
\begin{align}
[\bar y_\ast,y_{\ast\ast}] \ni y_\ast \mapsto  \omega(z_0;y_\ast) - \omega_-(z_0;f(\rho(z_0;y_\ast))) = : h(y_\ast).\notag
\end{align}
By the above discussion $h$ is continuous, $h(A)<0$ and $h(B)>0$. Therefore, by the Intermediate Value Theorem there exists a $y_s\in(\bar y_\ast,y_{\ast\ast})$ such that $h(y_s)=0$.  The solution $(\rho(\cdot;y_s),\omega(\cdot;y_s))$ exists on $[0,1]$, satisfies $\omega(0)=\frac13$ and belongs to $Y$. This is a contradiction to~\eqref{E:WLESSTHANTHIRDZ}.
\end{proof}


It remains to show that the solution is regular at $z=0$ and we do this by showing that it coincides with a solution $(\rho_-(\cdot;\rho_\ast),\omega_-(\cdot;\rho_\ast))$ emanating from the origin, with the correct choice of $\rho_\ast$.

\begin{proposition}\label{P:GOODPROPERTIES}
There exists a constant $C_\ast>0$ so that
\begin{align}
\lv \rho(z;\bar y_\ast)\rv + \lv \omega(z;\bar y_\ast)\rv
 +  \lv \frac{\omega(z;\bar y_\ast)-\frac13}{z^2}\rv
\le C_\ast, \ \ z\in(0,1].\notag
\end{align} 
The solution $\rho(\cdot;\bar y_\ast):(0,1]\to\mathbb R_{>0}$ extends continuously to $z=0$ and 
$\rho_\ast:=\rho(0;\bar y_\ast)<\infty$. Moreover, the solution $(\rho(\cdot;\bar y_\ast),\omega(\cdot;\bar y_\ast))$ coincides with
$(\rho_-(\cdot;\rho_\ast),\omega_-(\cdot;\rho_\ast))$ and it is therefore analytic at $z=0$ by Theorem~\ref{T:ANALYTICITYLEFT}.
\end{proposition}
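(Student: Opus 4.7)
The plan is to break the proof into four steps: first obtain a uniform upper bound on $\rho(\cdot;\bar y_\ast)$ on $(0,1]$ and extract a finite limit $\rho_\ast$; second deduce the quantitative bound on $(\omega-\tfrac13)/z^2$; third identify the solution with $(\rho_-(\cdot;\rho_\ast),\omega_-(\cdot;\rho_\ast))$ via a singular-ODE uniqueness argument; and finally inherit analyticity from Theorem~\ref{T:ANALYTICITYLEFT}. Boundedness of $\omega$ is immediate, since Proposition~\ref{P:ONETHIRD} already extends $\omega(\cdot;\bar y_\ast)$ continuously to $[0,1]$ with value $\tfrac13$ at the origin.

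The main obstacle is the upper bound on $\rho$, which I will reduce to a Riccati-type estimate for its reciprocal. Since $\lim_{z\to0}\omega(z;\bar y_\ast)=\tfrac13$, we have $\bar y_\ast\notin\X$, so $\bar y_\ast\in\Y\cup\Z$, and Lemma~\ref{L:PREPZ} gives $\rho(z;\bar y_\ast)>\omega(z;\bar y_\ast)>0$ for $z$ close to $0$. Setting $\psi:=1/\rho(\cdot;\bar y_\ast)$, equation~\eqref{E:RHOEQN} rewrites as
\begin{align*}
\psi'(z) = g(z)\bigl(1-\omega(z)\psi(z)\bigr), \qquad g(z):=\frac{2\bar y_\ast^2 z\,\omega(z)}{1-\bar y_\ast^2 z^2\omega(z)^2}.
\end{align*}
Near $z=0$ the denominator exceeds $\tfrac12$ and $\omega$ is close to $\tfrac13$, so $g(z)=O(z)$; moreover $0<\omega\psi<1$ forces $\psi$ to be increasing, so the limit $\psi_0:=\lim_{z\to0^+}\psi(z)\in[0,\infty)$ exists. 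If $\psi_0=0$, then integrating the inequality $\psi'(\tau)\le g(\tau)\lesssim\tau$ yields $\psi(z)\lesssim z^2$, i.e.\ $\rho(z)\gtrsim z^{-2}$ for small $z$; this contradicts the a priori bound $\rho(z)<(\bar y_\ast z)^{-1}$ from Lemma~\ref{L:APRIORIODE}. Hence $\psi_0>0$ and $\rho$ extends continuously to $z=0$ with value $\rho_\ast:=1/\psi_0\in(0,\infty)$, giving the required uniform bounds.

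For the $\omega$-estimate, I rewrite equation~\eqref{E:OMEGAEQN} in the form
\begin{align*}
(z^3\omega)' = z^2 + z^3 h(z), \qquad h(z):=\frac{2\bar y_\ast^2 z\,\omega^2(\rho-\omega)}{1-\bar y_\ast^2 z^2\omega^2}.
\end{align*}
With $\rho,\omega$ bounded and the solution a definite distance from the sonic line near $z=0$, we have $h(z)=O(z)$; since $\omega$ is bounded, $z_1^3\omega(z_1)\to0$ as $z_1\to 0^+$, and integration from $0$ to $z$ gives $z^3\omega(z)=\tfrac{z^3}{3}+O(z^5)$, whence $\omega(z)-\tfrac13=O(z^2)$ uniformly on $(0,1]$.

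To identify the flow with $(\rho_-(\cdot;\rho_\ast),\omega_-(\cdot;\rho_\ast))$, note that the latter is analytic on $[0,\tilde r)$ by Theorem~\ref{T:ANALYTICITYLEFT}. Letting $\Delta\rho,\Delta\omega$ denote the respective differences, both vanish at $z=0$, and both $\omega$-profiles satisfy $\omega-\tfrac13=O(z^2)$, so $\Delta\omega=O(z^2)$ as well. Multiplying the difference of the two $\omega$-equations by $z^3$ kills the singular $3\Delta\omega/z$ contribution and gives $(z^3\Delta\omega)' = z^3\bigl[N(\rho^{(1)},\omega^{(1)})-N(\rho^{(2)},\omega^{(2)})\bigr]$, where $N$ denotes the off-sonic nonlinearity from~\eqref{E:OMEGAEQN}. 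Since $|N^{(1)}-N^{(2)}|\lesssim z(|\Delta\rho|+|\Delta\omega|)$, and the analogous Lipschitz bound holds for the $\rho$-equation, integration from $0$ delivers
\begin{align*}
|\Delta\rho(z)|+|\Delta\omega(z)| \lesssim z^2\sup_{\tau\in[0,z]}\bigl(|\Delta\rho(\tau)|+|\Delta\omega(\tau)|\bigr).
\end{align*}
A small-$z_0$ absorption argument forces $\Delta\rho\equiv\Delta\omega\equiv 0$ on $[0,z_0]$, after which classical ODE uniqueness away from the sonic line propagates the identity to the full common domain of definition. Analyticity of $(\rho(\cdot;\bar y_\ast),\omega(\cdot;\bar y_\ast))$ at $z=0$ then follows from Theorem~\ref{T:ANALYTICITYLEFT}.
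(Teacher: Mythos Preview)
Your proof is correct and follows the same overall strategy as the paper: bound $\omega$ via Proposition~\ref{P:ONETHIRD}, bound $\rho$, obtain the $O(z^2)$ estimate on $\omega-\tfrac13$ through the integrating factor $z^3$, and conclude by a uniqueness argument against $(\rho_-,\omega_-)$. The tactical choices differ in two places. For the $\rho$-bound the paper is more direct: from $z\rho<\tfrac{1}{\bar y_\ast}$ (Lemma~\ref{L:APRIORIODE}) and boundedness of $\omega$ one reads off $|\rho'|\le C\rho$ immediately from~\eqref{E:RHOEQN}, whereas your reciprocal/Riccati argument reaches the same conclusion by contradiction. For uniqueness the paper uses an $L^2$-energy estimate (multiply the linearized system by $\bar\rho,\bar\omega$, integrate, and apply Cauchy--Schwarz/Gr\"onwall), while your $L^\infty$ contraction via the integrating factor $z^3$ is an equally valid and arguably cleaner alternative for this regular-singular endpoint.
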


\begin{proof}
By Proposition~\ref{P:ONETHIRD} it is clear that $\omega(\cdot;\bar y_\ast)$ is bounded on $[0,1]$. From~\eqref{E:RHOEQN} and~\eqref{E:RHOAPRIORI} we conclude $|\rho'| \le C\rho$ and thus $\rho$ is  bounded up to $z=0$. Using the boundedness of $\rho$ and $\omega$, equation~\eqref{E:RHOEQN} immediately implies $|\rho'(z)|\lesssim z$ for all $z\in[0,1]$.

Since $\rho(\cdot;\bar y_\ast)-\omega(\cdot;\bar y_\ast)\ge0$ on $(0,1]$ and  both $\rho(\cdot;\bar y_\ast)$ and $\omega(\cdot;\bar y_\ast)$ are positive, we conclude from~\eqref{E:RHOEQN} that $\rho'\le0$ and therefore the limit $\rho_\ast:=\lim_{z\to0}\rho(z;\bar y_\ast)$ exists and by the above it is finite.
Let
\[
\zeta = \omega-\frac13\ge0.
\]
It is then easy to check from~\eqref{E:OMEGAEQN}
\begin{align}
\left(\zeta z^3\right)' = z^3 \frac{2 \bar y_\ast^2 z \omega^2(\rho-\omega)}{1-\bar y_\ast^2 z^2\omega^2}\notag
\end{align}
and therefore, since $\omega$ and $\rho$ are uniformly bounded for any $0<z_1<z$ we obtain
\[
\zeta(z) z^3 - \zeta(z_1) z_1^3 \le C \int_{z_1}^{z}\tau^4\,d\tau  = \frac C5\left(z^5-z_1^5\right) .
\]
We now let $z_1\to0^+$ and conclude 
\be\label{E:ZETABOUND}
\zeta(z)\lesssim z^2
\ee

Consider now $\bar\rho(z):=\rho(z;\bar y_\ast)-\rho_-(z;\rho_\ast)$ and $\bar\omega(z):=\omega(z;\bar y_\ast)-\omega_-(z;\rho_\ast)$, both are defined in a (right) neighbourhood of $z=0$ and satisfy
\begin{align}
\bar \rho' &= O(1)\bar\rho + O(1)\bar\omega \notag \\
\bar\omega' & = - 3\frac{\bar\omega}{z} + O(1)\bar\rho + O(1)\bar\omega, \notag
\end{align}
where $\bar\rho(0)=\bar\omega(0)=0$. Here we have used the already proven boundedness of $(\rho(\cdot;\bar y_\ast),\omega(\cdot;\bar y_\ast))$ and the boundedness of $(\rho(\cdot;\rho_\ast),\omega(\cdot;\rho_\ast))$, see Lemma~\ref{L:APRIORI}.
We multiply the first equation by $\bar\rho$, the second by $\bar\omega$, integrate over $[0,z]$ and use Cauchy-Schwarz to get
\begin{align}\label{E:UNIQUE}
\bar\rho(z)^2+\bar\omega(z)^2 + 3\int_0^z\frac{\bar\omega(\tau)^2}{\tau}\,d\tau  \le C \int_0^z \left(\bar\rho(\tau)^2+\bar\omega(\tau)^2\right)\,d\tau.
\end{align}
We note that $\int_0^z\frac{\bar\omega(\tau)^2}{\tau}\,d\tau $ is well-defined, since $\bar\omega = \zeta - \zeta_-$, where $\zeta_-=\omega_--\frac13$; we use~\eqref{E:ZETABOUND} and observe $\zeta_-\lesssim z^2$ in the vicinity of $z=0$ by the analyticity of $\omega_-$, see Theorem~\ref{T:ANALYTICITYLEFT}. Therefore $\bar\rho(z)^2+\bar\omega(z)^2=0$ by~\eqref{E:UNIQUE}. The analyticity claim now follows from Theorem~\ref{T:ANALYTICITYLEFT}.
\end{proof}

\section{Proof of the main theorem}\label{S:MAIN}

The existence of an LP-type solution, the corresponding boundary conditions stated in Theorem~\ref{T:MAIN}, and~\eqref{E:DENSITYPROP} follow directly from Propositions~~\ref{P:FGE},~\ref{P:EXISTENCEY},~\ref{P:GOODPROPERTIES}. The real analyticity locally around the sonic point and the origin follows from Theorems~\ref{T:ANALYTICITY} and~\ref{T:ANALYTICITYLEFT} respectively, while away from these two points it follows from the standard ODE-theory. Unwinding the change of variables~\eqref{E:ZVARIABLE} and~\eqref{E:RELVEL} we easily obtain~\eqref{E:VELPROP} from~\eqref{E:OMEGAUPDOWN} when $z\ge 1$ or equivalently $y\ge \bar y_\ast$. To get~\eqref{E:VELPROP} for $z<1$ we first rewrite~\eqref{E:OMEGAEQN} in the form
\[
\omega' = \frac{1-\omega}z + \frac{-2\omega+2\bar y_\ast^2z^2\omega^2\rho}{z\left(1-\bar y_\ast^2z^2\omega^2\right)}.
\]
For any $z\in(0,1)$ we have $\frac1{\bar y_\ast^2z^2}\omega(z;\bar y_\ast)\rho(z;\bar y_\ast)<1$ by~\eqref{E:RHOAPRIORI} and the bound $\omega<\rho$ which follows from~\eqref{E:WLESSTHANRZ} and  $\bar y_\ast\in\Z$. This yields
\be\label{E:OMEGABOUNDBYONE}
\omega'<\frac{1-\omega}z, \ \ z\in(0,1).
\ee
Let $z_c:=\sup\{z\in(0,1)\,\big| \omega(z;\bar y_\ast)<1\}$. We use a contradiction argument and assume $z_c<1$. Since $\omega(0;\bar y_\ast)=\frac13$ it follows by continuity that $z_c>0$. At $z_c$ we must therefore have $\omega(z_c)=1$ and $\omega'(z_c)\ge0$. On the other hand, from~\eqref{E:OMEGABOUNDBYONE} we conclude
\[
\omega'(z_c;\bar y_\ast)<\frac{1-\omega(z_c;\bar y_\ast)}{z_c} =0,
\]
a contradiction. 
We conclude that for any $z\in[0,1)$ we have $\omega(z)<1$. By Proposition~\ref{P:EXISTENCEY} we also have $\omega(z,\bar y_\ast)\ge\frac13$ and therefore~\eqref{E:VELPROP} follows also in the region $z\in[0,1)$ or equivalently $y\in[0,\bar y_\ast)$.

\medskip

{\bf Acknowledgments.}
Y. Guo's research is supported in part by NSF DMS-grant 1810868.
M. Had\v zi\'c's research is supported by the EPSRC Early Career Fellowship EP/S02218X/1.
J. Jang's research is supported by the NSF DMS-grant 2009458
and the Simons Fellowship (grant number 616364).

\end{document}